\documentclass[french,11pt,a4paper,leqno]{amsart}
\usepackage{amsmath,amstext, amsthm, amssymb}
\usepackage{latexsym, amsfonts, graphicx, xcolor}
\usepackage[latin1]{inputenc} 
\usepackage[T1]{fontenc} 
\usepackage{enumerate}
\usepackage[french]{babel} 
\usepackage{enumitem}
\usepackage{hyperref}

\makeatletter
\def\namedlabel#1#2{\begingroup
    #2%
    \def\@currentlabel{#2}%
    \phantomsection\label{#1}\endgroup
}

\makeatother

\hypersetup{
  colorlinks   = true, 
  urlcolor     = blue, 
  linkcolor    = blue, 
  citecolor   = red 
}

\numberwithin{equation}{section} 

\newtheorem{thm}{Th\'eor\`eme}[section]
\newtheorem{prop}[thm]{Proposition}
\newtheorem{lem}[thm]{Lemme}
\newtheorem{coro}[thm]{Corollaire}

\newtheorem*{prop*}{Proposition}
\newtheorem*{thmE1}{Th\'eor\`eme E1}
\newtheorem*{thmE2}{Th\'eor\`eme E2}
\newtheorem*{thmM1}{Th\'eor\`eme M1}
\newtheorem*{thmM2}{Th\'eor\`eme M2}

\theoremstyle{remark}
\newtheorem{rem}[thm]{Remarque}

\theoremstyle{definition}
\newtheorem{defi}[thm]{D\'efinition}
\newtheorem{ex}[thm]{Exemple}

\newtheorem*{nota*}{Notation}

\newcommand{\Q}{\overline{\mathbb Q}}

\newcommand{\N}{\mathbb{N}}

\newcommand{\X}{\boldsymbol{X}}

\title[Valeurs de $E$-fonctions ou de $M$-fonctions ]{
Relations alg\'ebriques entre valeurs de $E$-fonctions ou de $M$-fonctions }

\author{Boris Adamczewski}
\address{
Univ Lyon, Universit\'e Claude Bernard Lyon 1\\
 CNRS UMR 5208, Institut Camille Jordan \\
 F-69622 Villeurbanne Cedex, France}
\email{Boris.Adamczewski@math.cnrs.fr}

\author{Colin Faverjon}
\address{
Univ Lyon, Universit\'e Claude Bernard Lyon 1\\
 CNRS UMR 5208, Institut Camille Jordan \\
 F-69622 Villeurbanne Cedex, France}
\email{faverjon@math.univ-lyon1.fr}
\date{}

\thanks{ }
\begin{document}

\begin{abstract}
Nous montrons que \emph{toutes} les relations algébriques sur $\Q$ entre les valeurs prises par des $E$-fonctions de Siegel en un point algébrique non nul sont d'origine fonctionnelle, en ce sens qu'elles s'obtiennent par dégénérescence de relations algébro-différentielles sur $\Q(z)$ entre les fonctions considérées. 
Nous obtenons un résultat analogue pour les $M_q$-fonctions de Mahler, dans lequel les relations dites $\sigma_q$-alg\'ebriques  se substituent aux relations alg\'ebro-diff\'erentielles. 
Nous donnons également plusieurs conséquences de ce résultat, notamment concernant certains phénomènes de descente.  
Le point de vue adopté révèle 
des similitudes frappantes entre la théorie des $E$-fonctions et celle des $M_q$-fonctions.  
\end{abstract}
\bibliographystyle{abbvr}
\maketitle

\section{Introduction}

La th\'eorie des nombres transcendants est mue par deux objectifs principaux. Un ensemble de nombres complexes $\Xi$ \'etant fix\'e, il s'agit d'une part d'\^etre capable de d\'eterminer, pour tous  
$\xi_1,\ldots,\xi_r\in\Xi$, l'ensemble des relations alg\'ebriques ou lin\'eaires entre ces nombres sur le corps $\Q$ des nombres alg\'ebriques ou un sous-corps de ce dernier et, d'autre part, d'en trouver la \emph{raison d'\^etre}. Ces deux probl\`emes sont naturellement  li\'es et le second, plus ambigu, d\'epend bien s\^ur de la fa\c con dont les \'el\'ements de $\Xi$ sont d\'efinis. 

Par exemple,  
si $\Xi$ d\'esigne l'anneau des p\'eriodes, une conjecture de Grothendieck pr\'edit que les relations alg\'ebriques sont n\'ecessairement d'origine motivique, ce qui r\'epondrait dans ce cadre au second objectif tout en offrant un outil puissant pour atteindre le premier. Une formulation plus \'el\'ementaire, bien qu'\'equivalente, est donn\'ee par la conjecture de Kontsevich et Zagier qui 
stipule que toute relation alg\'ebrique entre p\'eriodes d\'ecoule des r\`egles fondamentales de l'int\'egration que sont l'additivité, le changement de variables et la formule de Stokes.   
Pour davantage de détails sur ces deux conjectures et leurs liens voir \cite{Fre22,KZ}. 
Si à présent on choisit pour $\Xi$ l'ensemble $\{(1/\sqrt{2\pi})\Gamma(r) : r\in\mathbb Q\}$, où $\Gamma$ est la fonction gamma d'Euler, la conjecture de Rohrlich-Lang pr\'edit que les relations alg\'ebriques proviennent n\'ecessairement de relations fonctionnelles strandard associées à $\Gamma$ (voir, par exemple, \cite[Conjecture 22]{Miw}). 
Outre le fait de s'inscrire dans une ambition commune, ces diff\'erentes conjectures ont pour point commun d'être consid\'er\'ees 
comme totalement hors de port\'ee des méthodes actuelles.

Consid\'erons des fonctions analytiques $f_1(z),\ldots,f_r(z)$ dont le d\'eveloppement de Taylor \`a l'origine est \`a coefficients alg\'ebriques et un nombre alg\'ebrique non nul $\alpha$ appartenant \`a un domaine d'analyticité commun contenant $0$. Dans ce contexte, les relations fonctionnelles, lorsqu'il en existe, sont une source évidente de relations entre les nombres $f_1(\alpha),\ldots,f_r(\alpha)$. En effet, s'il existe une relation alg\'ebrique homog\`ene sur $\Q(z)$ entre $f_1(z),\ldots,f_r(z)$, c'est-\`a-dire s'il existe un polyn\^ome $Q\in\Q[z,X_1,\ldots,X_r]$, homog\`ene en les variables $X_1,\ldots,X_r$, tel que 
$$
Q(z,f_1(z),\ldots,f_r(z))=0 \,,
$$
on obtient par sp\'ecialisation au point $\alpha$ une relation homog\`ene de m\^eme degr\'e, \`a savoir
\begin{equation*}\label{eq: relb}
P(f_1(\alpha),\ldots,f_r(\alpha))=0\,,
\end{equation*}
o\`u $P(X_1,\ldots,X_r):=Q(\alpha,X_1,\ldots,X_r)$.
Une telle relation sera dite \emph{banale}, tandis que les relations alg\'ebriques  ne pouvant s'obtenir de cette fa\c con seront qualifiées de \emph{non banales}.  
Trouver les relation banales entre les nombres $f_1(\alpha),\ldots,f_r(\alpha)$ revient donc \`a d\'eterminer l'idéal des relations alg\'ebriques sur $\Q(z)$ entre les fonctions $f_1(z),\ldots,f_r(z)$. 
Ce n'est g\'en\'eralement pas une t\^ache ais\'ee, mais lorsque les fonctions en question sont li\'ees par des \'equations diff\'erentielles ou aux diff\'erences lin\'eaires, 
les th\'eories galoisiennes associ\'ees  permettent d'obtenir des r\'esultats probants (voir \cite{vdPS2,vdPS1}).  

Notons que m\^eme dans des cas tr\`es simples, les relations banales n'\'epuisent pas n\'ecessairement l'ensemble des relations alg\'ebriques.  
Par exemple,  la fonction $f(z):=(z-1)e^z$ \'etant non nulle, la relation $f(1)=0$ ne peut \^etre banale.  On peut n\'eanmoins trouver une origine fonctionnelle \`a cette relation. En effet, 
celle-ci s'obtient par sp\'ecialisation au point $z=1$ de la relation diff\'erentielle 
\begin{equation}\label{eq: reld}
zf(z)-(z-1)f'(z)=0\,.
\end{equation}
Comme le coefficient de $f'$ s'annule en $z=1$, on dira que cette relation \emph{d\'eg\'en\`ere} au point $1$.  Le lecteur, ou la lectrice, prendra garde au fait que le caract\`ere banale 
d'une relation est relatif et se d\'etermine au regard d'un ensemble de fonctions pr\'ealablement fix\'e. Ainsi, la relation $f(1)=0$ est non banale relativement à l'ensemble $\{f(z)\}$, mais elle est banale,  
d'apr\`es \eqref{eq: reld}, relativement \`a l'ensemble $\{f(z),f'(z)\}$.  

Dans cet article, nous  étudions le cas où l'ensemble $\Xi$ est compos\'e des valeurs prises en un point $\alpha \in\Q^*$ 
par les éléments de l'anneau des $E$-fonctions de Siegel ou de celui des  $M_q$-fonctions de Mahler. 
Le th\'eor\`eme \ref{thm: orbital} ci-après montre que dans le cadre des $E$-fonctions, dont $f$ est justement un exemple, toutes les relations alg\'ebriques non banales s'obtiennent par dégénérescence de relations fonctionnelles alg\'ebro-diff\'erentielles, c'est-à-dire de fa\c con similaire à \eqref{eq: reld}. Il fournit \'egalement un r\'esultat analogue pour les $M_q$-fonctions, dans lequel les relations dites $\sigma_q$-alg\'ebriques  se substituent aux relations alg\'ebro-diff\'erentielles. 
Le point de vue adopté révèle 
des similitudes frappantes entre ces deux th\'eories qu'il serait intéressant de développer plus avant. Celles-ci sont d'autant plus étonnantes  que les $E$- et les $M_q$-fonctions sont de nature très différente. Par exemple, les $E$-fonctions sont entières, alors qu'une $M_q$-fonction qui n'est pas rationnelle  admet le cercle unité comme coupure et est  différentiellement transcendante (voir \cite{ADH,BCR}).

\subsection{R\'esultats principaux} 

Notons $\delta:=\frac{d}{dz}$ et, pour tout entier $k\geq 0$, $\delta^k(f)=f^{(k)}(z)$.  
Une $E$-fonction est une s\'erie formelle de la forme $f(z)=\sum_{n=0}^{\infty}\frac{a_n}{n!}z^n\in \Q[[z]]$
qui  satisfait \`a une \'equation diff\'erentielle lin\'eaire \`a 
coefficients dans $\Q[z]$,  c'est-\`a-dire qu'il existe des polyn\^omes $p_0(z),\ldots,p_m(z)\in\Q[z]$, non tous nuls, tels que 
$$
p_0f+p_1\delta(f)+\cdots +p_m\delta^{(m)}(f)=0 \,,
$$
et dont la croissance arithm\'etique des coefficients est limit\'ee par les deux conditions suivantes :  il existe $C>0$ et une suite d'entiers $d_n\geq 1$ tels que 
pour tout $\sigma\in \mbox{Gal}(\Q/\mathbb Q)$ et tout entier $n\geq 1$, on a $\vert \sigma(a_n)\vert \leq C^n$, $d_n\leq C^n$ et $d_na_i$ est un entier alg\'ebrique pour tout $i$, $1\leq i\leq n$. 

Pour tout entier $q\geq 2$, d\'esignons par $\sigma_q$ l'endomorphisme injectif de $\Q[[z]]$ d\'efini par $\sigma_q(f)=f(z^q)$, de sorte que, pour tout entier $k\geq 0$, 
$\sigma_q^k(f)=f(z^{q^k})$.  
Une $M_q$-fonction est une s\'erie formelle de la forme $f(z)=\sum_{n=0}^{\infty}a_nz^n\in \Q[[z]]$ qui satisfait \`a une équation aux différences $\sigma_q$-lin\'eaire \`a coefficients dans $\Q[z]$, 
c'est-\`a-dire qu'il existe des polyn\^omes $p_0(z),\ldots,p_m(z)\in\Q[z]$, non tous nuls, tels que 
\begin{equation}\label{eq: ma}
p_0f+p_1\sigma_q(f)+\cdots +p_m\sigma_q^m(f)=0 \,.
\end{equation}
Rappelons qu'une $M_q$-fonction est convergente (cf. \cite[Lemma 4]{BCR}) et admet donc, d'après \eqref{eq: ma} un prolongement méromorphe dans le disque unité ouvert.

\'Etant donn\'ees $f_1(z),\ldots,f_r(z)\in\Q[[z]]$, on notera 
$$
\mathfrak I(f_1,\ldots,f_r) := \left\{ Q\in \Q(z)[X_1,\ldots,X_r] : Q(z,f_1(z),\ldots,f_r(z))=0\right\}
$$
l'id\'eal des relations alg\'ebriques sur $\Q(z)$ entre ces s\'eries formelles. 
 L'id\'eal des relations algébro-différentielles, ou $\delta$-alg\'ebriques,  
 c'est-\`a-dire l'id\'eal des relations alg\'ebriques sur $\Q(z)$ entre 
$f_1(z),\ldots,f_r(z)$ et leurs d\'eriv\'ees successives, est not\'e 
$$
\mathfrak I^{\delta}(f_1,\ldots,f_r) :=\left\{ Q \in \Q(z)[(X_{i,j})_{1\leq i \leq r,\, j \in \N}] : Q(z,\delta^j(f_i(z))_{1\leq i \leq r,\, j \geq 0})=0\right\} \,.
$$
De fa\c con similaire, on note  $\mathfrak I^{\sigma_q}(f_1,\ldots,f_r)$ l'id\'eal des relations $\sigma_q$~-alg\'ebriques, c'est-\`a-dire l'id\'eal des relations alg\'ebriques sur $\Q(z)$ entre  
$f_1,\ldots,f_r$ et leurs images successives par l'op\'erateur $\sigma_q$. 
 Une relation $Q$, $\delta$-alg\'ebrique ou $\sigma_q$-alg\'ebrique,  est dite homog\`ene si $Q$ est homog\`ene en les variables $X_{i,j}$. 
En identifiant les variables $X_1,\ldots,X_r$ et $X_{1,0},\ldots,X_{r,0}$, on obtient que 
$$\mathfrak I(f_1,\ldots,f_r) \subset \mathfrak I^{\delta}(f_1,\ldots,f_r) \;\; \mbox{ et }\;\;  
\mathfrak I(f_1,\ldots,f_r) \subset \mathfrak I^{\sigma_q}(f_1,\ldots,f_r)\,.$$ 
Une relation $\delta$-algébrique  $Q\in Q[z,(X_{i,j})_{1\leq i \leq r,\, j \in \N}] $ \emph{dégénère} en $\alpha$ si 
\begin{equation*}
Q\in \mathfrak I^{\delta}(f_1,\ldots,f_r)\setminus\mathfrak I(f_1,\ldots,f_r) \;\;\mbox{ et }\;\;Q(\alpha,(X_{i,j})) \in \Q[X_1,\ldots,X_r]\,.
\end{equation*}
La dégénérescence d'une relation $\sigma_q$-algébrique est définie de façon similaire.

Nous pouvons \`a pr\'esent \'enoncer le r\'esultat principal de cet article.

\begin{thm}\label{thm: orbital} On a les deux r\'esultats suivants. 
	
	\begin{itemize}
\item[{\rm (E)}] Soient $f_1(z),\ldots,f_r(z)$ des $E$-fonctions et $\alpha \in \Q^*$. Toute relation alg\'ebrique homog\`ene sur $\Q$ non banale entre 
$f_1(\alpha),\ldots,f_r(\alpha)$ peut s'obtenir par dégénérescence en $\alpha$ d'une relation $\delta$-alg\'ebrique homog\`ene  entre les fonctions $f_1(z),\ldots,f_r(z)$. 

\item[{\rm(${\rm M}_q$)}] Soient $f_1(z),\ldots,f_r(z)$ des $M_q$-fonctions et $\alpha \in \Q$, $0<\vert\alpha\vert<1$, un point qui n'est un pôle d'aucune de ces fonctions. 
Toute relation alg\'ebrique homog\`ene sur $\Q$ non banale entre 
$f_1(\alpha),\ldots,f_r(\alpha)$ peut s'obtenir par dégénérescence en $\alpha$ d'une relation $\sigma_q$-alg\'ebrique homog\`ene  entre les fonctions $f_1(z),\ldots,f_r(z)$. 
 \end{itemize}
 \end{thm}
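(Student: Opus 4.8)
The plan is to prove (E) and $({\rm M}_q)$ in parallel, writing $\theta$ for the operator $\delta$ or $\sigma_q$ according to the case, and to reduce the statement about homogeneous relations of arbitrary degree to the case of \emph{linear} relations. Indeed, a homogeneous relation of degree $d$ among $f_1(\alpha),\ldots,f_r(\alpha)$ is a linear relation among the values $m_{\boldsymbol\beta}(\alpha)$ of the monomials $m_{\boldsymbol\beta}(z)=\prod_i f_i(z)^{\beta_i}$, $|\boldsymbol\beta|=d$; since products of $E$-functions (resp. $M_q$-functions) are again of the same type, the $m_{\boldsymbol\beta}$ stay in the class. Moreover $\theta^j(m_{\boldsymbol\beta})$ is, by the Leibniz rule (resp. by $\sigma_q$-multiplicativity), a homogeneous polynomial of degree $d$ in the $\theta^k f_i$, so a $\theta$-linear relation among the $m_{\boldsymbol\beta}$ translates into a homogeneous element of $\I^{\theta}(f_1,\ldots,f_r)$, and the property of degenerating at $\alpha$ is preserved under this dictionary (the coefficients of all iterate-variables vanish at $\alpha$ on both sides). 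It thus suffices to prove the linear version: every non banale $\Q$-linear relation among the values of finitely many $E$-functions (resp. $M_q$-functions) is the degeneration at $\alpha$ of a $\theta$-linear relation between them.

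Next I would pass to a system. Let $V$ be the finite-dimensional $\Q(z)$-vector space spanned by the monomials $m_{\boldsymbol\beta}$ and all their iterates under $\theta$, and fix a basis $G_1,\ldots,G_M$ of functions of our class chosen among these iterates (discarding the $\Q(z)$-dependencies among the $m_{\boldsymbol\beta}$, which are banale). The closure of $V$ under $\theta$ is encoded by a system $\theta\mathbf G=B\,\mathbf G$ with $B\in M_M(\Q(z))$ (resp. $\mathbf G(z)=A(z)\mathbf G(z^q)$). The key input at a point where the system is regular is the refined Siegel--Shidlovskii theorem of Beukers for $E$-functions (resp. its Mahler analogue due to the authors and Philippon): if the entries of $B$ (resp. of $A$) have no pole at $\alpha$ and no relevant degeneration there, the values $G_1(\alpha),\ldots,G_M(\alpha)$ are linearly independent over $\Q$ exactly up to the $\Q(z)$-linear relations already holding between the functions. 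Hence at a regular $\alpha$ every relation is banale, and all the action takes place when $\alpha$ is a singularity of the system --- necessarily \emph{apparent} for $\alpha\neq 0$ by the structure theorem of André for $E$-functions, and resolvable along the backward orbit $\alpha,\alpha^{q},\alpha^{q^2},\dots\to 0$ in the Mahler case.

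The heart of the argument is a local analysis at such a singular $\alpha$ producing the degenerate relations from the polar part of the system; here I use crucially that $E$-functions are entire and that the $M_q$-functions at hand are holomorphic at $\alpha$, so all the values $G_k(\alpha)$ are well defined. Writing $\theta G_i=\sum_k b_{ik}(z)G_k$, the relations $R_i:=X_{i,1}-\sum_k b_{ik}(z)X_{k,0}$ lie in the $\theta$-relation ideal; a $\Q(z)$-combination $\sum_i u_i(z)R_i$ with $u_i(\alpha)=0$ kills every iterate-variable upon specialisation and therefore degenerates at $\alpha$ as soon as its $X_{k,0}$-part survives. Taking $u_i(z)=(z-\alpha)v_i$ with $v_i\in\Q$, the surviving coefficient of $X_{k,0}$ at $\alpha$ is $-\sum_i v_i\,\bigl[(z-\alpha)b_{ik}(z)\bigr]_{z=\alpha}$, which is nonzero precisely because $B$ has a pole at $\alpha$; passing to the limit $z\to\alpha$ in $\sum_i u_i(z)\,\theta G_i(z)=\sum_{i,k}u_i(z)b_{ik}(z)G_k(z)$, whose left-hand side vanishes at $\alpha$, yields a genuine degenerate non banale relation among the $G_k(\alpha)$. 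Higher-order poles are treated by also using the relations expressing the higher iterates $\theta^j G_i$ and by letting $u_i$ vanish to higher order at $\alpha$.

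The main obstacle I anticipate is \emph{completeness}: one must show that these degenerate relations exhaust all non banale relations among the $G_k(\alpha)$. The plan is to compare dimensions. The corank of the evaluation map $\mathbf G\mapsto\mathbf G(\alpha)$ is governed by the local exponents of the system at $\alpha$, equivalently by the vanishing order of the Wronskian; on the other hand, a shearing transformation $T\in\Gl_M(\Q(z))$ regular away from $\alpha$ takes the system to one regular at $\alpha$, and applying Beukers' theorem (resp. its Mahler analogue) to the desingularised system shows that its values at $\alpha$ carry only banale relations. Consequently every relation among the $G_k(\alpha)$ originates from the poles of $T$ at $\alpha$, and tracking those poles back reproduces exactly the degenerate relations built above and shows that they span the full relation space. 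The $M_q$-case follows the same blueprint with $\delta$ replaced by $\sigma_q$ and the differential system by the Mahler system, the analogue of Beukers' theorem and of the desingularisation being available in the difference setting; this common structure is precisely what makes the two statements fall out of a single argument.
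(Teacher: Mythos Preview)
Your proposal circles around the paper's argument but misses its economy. The paper's proof is direct: by Andr\'e's theorem, each $f_i$ is annihilated by an $E$-operator, whose only finite singularity is $0$; the direct sum of the companion systems for the $\delta^j f_i$ ($0\le j<m_i$) therefore has coefficients in $\Q[z,1/z]$ and is \emph{regular at every} $\alpha\in\Q^*$ from the start. A single application of Th\'eor\`eme~E2 then lifts the given homogeneous relation $P$ to a homogeneous $Q\in\Q[z,(X_{i,j})]$ with $Q(z,(\delta^j f_i))=0$ and $Q(\alpha,\cdot)=P$; either $Q$ involves only the $X_{i,0}$ (relation banale) or not (degeneration). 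No polar analysis, no corank counting, no shearing. The $M_q$ case follows the identical pattern after the paper establishes the analogue of Andr\'e's result: the notion of $M_q$-operator of level $R$ and the proof that every $M_q$-function analytic on $D(0,R)$ is annihilated by a $\sigma_q$-operator regular on $D^*(0,R)$.

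Your middle two paragraphs are a detour: the reduction to the linear case via monomials is unnecessary since E2/M2 already handle homogeneous relations directly, and the explicit manufacture of degenerate relations from the polar part of $B$ is subsumed by your own completeness paragraph. That last paragraph is where the proof actually lives, and it says: desingularise, then apply Beukers---which is the paper's argument in disguise. The genuine gap is in justifying the desingularisation. For $E$-functions you invoke Andr\'e, but only to say the singularity is apparent; to conclude you still need the sheared vector $TG$ to consist of $E$-functions to which E2 applies at $\alpha$, and the clean way to arrange this is precisely to use Andr\'e's $E$-operators from the outset rather than to start from an arbitrary system and repair it. For $M_q$-functions you simply assert that desingularisation is ``available in the difference setting'', but this is exactly what the paper has to \emph{prove}: the existence of a $\sigma_q$-operator regular at $\alpha$ annihilating a given $M_q$-function (the paper's $M_q$-operator construction, which relies on a nontrivial result from \cite{ABS}) is the key new technical input here, and without it your $(M_q)$ argument does not close.
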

 
 \begin{rem}
 
 Dans le cas (E), ce r\'esultat  repose sur deux ingr\'edients  : un raffinement qualitatif du th\'eor\`eme de Siegel-Shidlovskii obtenu par Beukers \cite{Be06} et le fait, d\'emontr\'e par Andr\'e 
 \cite{An1}, que toute $E$-fonction est annul\'ee par un $E$-op\'erateur diff\'erentiel. Les th\'eor\`emes de Siegel-Shidlovskii  et de Beukers imposent aux fonctions $f_1(z),\ldots,f_r(z)$ d'\^etre li\'ees par un syst\`eme diff\'erentiel lin\'eaire et au point $\alpha$ d'\^etre un point r\'egulier pour ce syst\`eme (voir section \ref{sec: E}). L'apport du th\'eor\`eme \ref{thm: orbital} est de supprimer ces deux restrictions. La d\'emonstration du cas {\rm(${\rm M}_q$)} est similaire :  nous substituons au th\'eor\`eme de Beukers un r\'esultat analogue dû \`a Philippon \cite{PPH} et aux auteurs \cite{AF17}, et  nous introduisons la notion de $M_q$-op\'erateur qui remplace celle de $E$-op\'erateur.  
 
  La \emph{profondeur} d'une relation $Q$, $\delta$-alg\'ebrique ou $\sigma_q$-alg\'ebrique, 
  est le plus petit entier $s$ tel que le support de $Q$ est inclus dans $\{(i,j) : 1\leq i\leq r, 0\leq j\leq s\}$. 
 Dans le cas (E), nous verrons que toutes les relations algébriques s'obtiennent par dégénérescence de relations $\delta$-algébriques dont la profondeur est bornée indépendamment du point $\alpha$. 
 Ceci n'est plus vrai dans le cas {\rm(${\rm M}_q$)} (cf. exemple \ref{ex: tm3}), mais la profondeur peut toutefois être bornée en fonction de $R$ pour tout $\alpha\in\Q$ tel que $0< \vert \alpha\vert<R<1$. 
 \end{rem}

Soit $\alpha\in \Q^*$. Notons $\mbox{\bf E}\subset \mathbb C$ l'ensemble form\'e des \'evaluations de toutes les $E$-fonctions au point $\alpha$. 
Comme l'anneau des $E$-fonctions est stable par le changement de variabe $z\mapsto \alpha z$, $\mbox{\bf E}$ ne d\'epend pas du choix de $\alpha$. 
\'Etant donn\'e un sous-corps $\mathbb K$ de $\Q$, on note $\mbox{\bf E}_{\mathbb K}$ l'ensemble form\'e de l'\'evaluation en $z=1$ de toutes les $E$-fonctions \`a coefficients dans 
$\mathbb K$, de sorte que $\mbox{\bf E}=\cup_{\mathbb K}\mbox{\bf E}_{\mathbb K}$. De fa\c con similaire, notons $\mbox{\bf M}_{q,\alpha}$ l'ensemble form\'e des \'evaluations au point $\alpha$ de toutes les $M_q$-fonctions qui n'ont pas de pôle en $\alpha$ et 
$\mbox{\bf M}_{q,\alpha,\mathbb K}$  l'ensemble form\'e des \'evaluations au point $\alpha$ de toutes les $M_q$-fonctions \`a coefficients dans $\mathbb K$ qui n'ont pas de pôle en $\alpha$.  
Cette fois-ci, les ensembles obtenus d\'ependent du point $\alpha$. 
Les ensembles $\mbox{\bf E}$, $\mbox{\bf E}_{\mathbb K}$, $\mbox{\bf M}_{q,\alpha}$ et  $\mbox{\bf M}_{q,\alpha,\mathbb K}$ 
sont tous des anneaux.

Le r\'esultat suivant d\'ecoule directement du  th\'eor\`eme \ref{thm: orbital} (cf. section \ref{sec: descente}). 

\begin{coro}\label{coro: descente}
	Soient ${\mathbb K} \subset \Q$ un corps et $\alpha\in \mathbb K$, $0<\vert \alpha\vert<1$. On a les deux r\'esultats suivants. 
	
	\begin{itemize}
	
	\item[{\rm (E)}] Des \'el\'ements de $\mbox{\bf E}_{\mathbb K}$ sont lin\'eairement d\'ependants sur  	$\Q$ si, et seulement si, ils le sont sur $\mathbb K$. En d'autres termes, 
	les $\mathbb K$-alg\`ebres $\mbox{\bf E}_{\mathbb K}$ et $\Q$ sont lin\'eairement disjointes et $\mbox{\bf E}=\mbox{\bf E}_{\mathbb K}\otimes_{\mathbb K}\Q$. 
	
	\item[{\rm(${\rm M}_q$)}] Des \'el\'ements de $\mbox{\bf M}_{q,\alpha,\mathbb K}$ sont lin\'eairement d\'ependants sur  $\Q$ si, et seulement si, ils le sont sur $\mathbb K$. En d'autres termes, 
	les $\mathbb K$-alg\`ebres $\mbox{\bf M}_{q,\alpha,\mathbb K}$ et $\Q$ sont lin\'eairement disjointes et $$\mbox{\bf M}_{q,\alpha}=\mbox{\bf M}_{q,\alpha,\mathbb K}\otimes_{\mathbb K}\Q\,.$$ 
	
	\end{itemize}
\end{coro}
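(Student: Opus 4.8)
The plan is to deduce the statement from Theorem~\ref{thm: orbital} by a descent argument, proving in each case the only nontrivial implication, namely that $\mathbb K$-linearly independent values of $\mathbb K$-functions stay linearly independent over $\Q$; the converse is immediate from $\mathbb K\subseteq\Q$, and the tensor-product description then follows by a spanning argument. Fix functions $f_1,\dots,f_r$ with coefficients in $\mathbb K$ ($E$-functions in case (E), $M_q$-functions in case $(\mathrm M_q)$) and let $\alpha$ be the relevant evaluation point: $\alpha=1$ in case (E) and the given $\alpha$ in case $(\mathrm M_q)$. In both situations $\alpha\in\mathbb K$ and $\alpha$ is not a pole of any $f_i$. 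I shall write the argument for (E), the case $(\mathrm M_q)$ being identical once $\delta^j$ and $\mathfrak I^{\delta}$ are replaced by $\sigma_q^j$ and $\mathfrak I^{\sigma_q}$.

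First I would record the descent of functional relations. Since $z$ is transcendental over $\Q$, the fields $\mathbb K(z)$ and $\Q$ are linearly disjoint over $\mathbb K$ and $\Q(z)=\mathbb K(z)\otimes_{\mathbb K}\Q$. As $f_i$ and all its derivatives $\delta^j f_i$ lie in $\mathbb K[[z]]$, the ideal $\mathfrak I^{\delta}(f_1,\dots,f_r)$ is the kernel of an evaluation homomorphism defined over $\mathbb K(z)$; flatness of the extension $\Q(z)/\mathbb K(z)$ then shows that it is generated over $\Q(z)$ by its $\mathbb K(z)$-rational part. In particular its homogeneous degree-one component admits a $\mathbb K(z)$-basis of linear relations $R_1,\dots,R_m$, with $R_k=\sum_{i,j}r_{k,i,j}(z)X_{i,j}$ and $r_{k,i,j}\in\mathbb K(z)$.

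Next I would run the main argument by contradiction. Assume $\sum_i c_i f_i(\alpha)=0$ with $c_i\in\Q$ not all zero. Whether or not this relation is banale, Theorem~\ref{thm: orbital} together with the inclusion $\mathfrak I\subseteq\mathfrak I^{\delta}$ exhibits it as the specialization at $\alpha$ of a homogeneous degree-one relation $Q=\sum_{i,j}q_{i,j}(z)X_{i,j}\in\mathfrak I^{\delta}$ whose specialization $Q(\alpha,\cdot)$ involves only the variables $X_{i,0}$; thus $q_{i,j}(\alpha)=0$ for $j\geq 1$ and $c_i=q_{i,0}(\alpha)$. Expanding $(q_{i,j})=\sum_k\lambda_k R_k$ with $\lambda_k\in\Q(z)$ chosen without pole at $\alpha$, and evaluating coefficients at $\alpha\in\mathbb K$, I obtain $\sum_k\lambda_k(\alpha)\rho_k=\big(q_{i,j}(\alpha)\big)$, where $\rho_k:=\big(r_{k,i,j}(\alpha)\big)$ is the coefficient vector of the $\mathbb K$-rational relation $R_k$ evaluated at $\alpha$ and satisfies $\sum_{i,j}r_{k,i,j}(\alpha)\,\delta^j f_i(\alpha)=0$. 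Let $U$ be the coordinate subspace of vectors supported in $j=0$ and $V$ the $\mathbb K$-span of $\rho_1,\dots,\rho_m$ (both living in the finite-dimensional space cut out by the bound on the profondeur supplied by Theorem~\ref{thm: orbital}). Since $\big(q_{i,j}(\alpha)\big)$ is supported in $j=0$ with $j=0$ part equal to $(c_i)$, this displays the vector $(c_i)$, viewed in $U$, as an element of $(V\otimes_{\mathbb K}\Q)\cap(U\otimes_{\mathbb K}\Q)$.

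The decisive step, which I expect to be the main obstacle, is to descend this intersection back to $\mathbb K$. Because $U$ is a $\mathbb K$-rational coordinate subspace and field extension is flat, intersection commutes with base change, $(V\otimes_{\mathbb K}\Q)\cap(U\otimes_{\mathbb K}\Q)=(V\cap U)\otimes_{\mathbb K}\Q$. Any element of $V\cap U$ is, by construction, the vector of coefficients in the $j=0$ slots of a $\mathbb K$-linear relation $\sum_i w_i f_i(\alpha)=0$ with $w_i\in\mathbb K$, so the assumed $\mathbb K$-linear independence of $f_1(\alpha),\dots,f_r(\alpha)$ forces $V\cap U=0$. Hence $(c_i)=0$, contradicting our assumption, and the descent of linear independence is proved. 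Finally, to obtain $\mbox{\bf E}=\mbox{\bf E}_{\mathbb K}\otimes_{\mathbb K}\Q$ (and its $M_q$-analogue) I would use that a single $E$- or $M_q$-function has coefficients in one number field $K_0$, since the recurrence attached to its defining equation propagates the finitely many algebraic initial coefficients inside $K_0$. Writing $\mathbb K'=\mathbb K K_0$, which is finite over $\mathbb K$, and expanding $f=\sum_l\theta_l g_l$ in a $\mathbb K$-basis $(\theta_l)$ of $\mathbb K'$, each $g_l$ is again an $E$- (resp.\ $M_q$-) function with coefficients in $\mathbb K$ by the same flatness descent applied to holonomy (resp.\ $\sigma_q$-holonomy), whence $f(\alpha)=\sum_l\theta_l g_l(\alpha)\in\Q\cdot\mbox{\bf E}_{\mathbb K}$. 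This spanning property, together with the linear disjointness established above, yields the asserted tensor-product decomposition.
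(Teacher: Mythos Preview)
Your proof is correct and shares the paper's overall strategy: lift a given $\Q$-linear dependence via Theorem~\ref{thm: orbital} to a homogeneous degree-one $\delta$-relation (resp.\ $\sigma_q$-relation) over $\Q(z)$, then descend to $\mathbb K$. Where the paper first reduces to $\mathbb K$ a number field and then simply invokes Lemma~\ref{lem:descent} (taken from~\cite{AF17}) to replace a $\Q[z]$-relation with a prescribed vanishing pattern at $\alpha$ by a $\mathbb K[z]$-relation with the same pattern, you argue more structurally: descend the whole $\Q(z)$-space of linear functional relations to $\mathbb K(z)$ by flatness, evaluate at $\alpha$, and commute the intersection with the $j=0$ coordinate subspace across the extension $\Q/\mathbb K$. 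Both routes are short and close in spirit; yours has the mild bonus of not needing the reduction to number fields for the linear-disjointness part. One wording to tighten: once the $R_k$ form a basis, the coefficients $\lambda_k$ in $Q=\sum_k\lambda_k R_k$ are uniquely determined, so ``chosen without pole at $\alpha$'' is not literally available; the clean fix is to take the $R_k$ to be a $\mathbb K[z]$-basis of the free $\mathbb K[z]$-module of linear relations (this exists since $\mathbb K[z]$ is a PID), whence flatness of $\Q[z]$ over $\mathbb K[z]$ forces $\lambda_k\in\Q[z]$ and the evaluation at $\alpha$ is unproblematic. The paper, like you, disposes of the tensor-product identification by reference rather than by the explicit spanning argument you sketch.
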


Le cas (E) a \'et\'e obtenu ind\'ependamment par Fischler et Rivoal \cite[Theorem 2]{FR23}. Le cas {\rm(${\rm M}_q$)} est dû aux auteurs \cite[th\'eor\`eme 1.7] {AF17}.  
Le th\'eor\`eme \ref{thm: orbital} 
permet d'obtenir une d\'emonstration unifi\'ee des deux cas. 

\subsection{Organisation de l'article}

La démonstration du théorème \ref{thm: orbital} occupe la section \ref{sec: thm}. Dans la section \ref{sec: eff}, nous expliquons brièvement comment le théorème \ref{thm: orbital} 
peut être utilisé pour atteindre le premier objectif évoqué, à savoir la détermination effective des relations algébriques entre les valeurs prises par des $E$- ou des 
$M_q$-fonctions fixées en un point algébrique $\alpha$ lui-aussi fixé.  
Une caractéristique fondamentale des relations banales est leur \emph{permanence} :  l'existence d'une relation banale en un point $\alpha$ implique l'existence d'une relation de m\^eme type 
(i.e. banale et homog\`ene de m\^eme degr\'e) en tout point alg\'ebrique non nul du domaine d'analyticité commun aux fonctions sous-jacentes. 
Dans le cadre des $E$- et des $M_q$-fonctions, nous montrons dans la section \ref{sec: spor} que les relations non banales sont quant à elles \emph{sporadiques}. 
Nous décrivons brièvement  la structure des idéaux de la forme $\mathfrak I^{\delta}(f_1,\ldots,f_r)$ et $\mathfrak I^{\sigma_q}(f_1,\ldots,f_r)$ dans la section \ref{sec: ideal}. 
Dans la section \ref{sec: descente}, nous montrons comment le corollaire \ref{coro: descente} découle du théorème \ref{thm: orbital} et nous mentionnons également quelques conséquences directes de ce résultat. La section \ref{sec: FR} est inspirée par l'article récent \cite{FR23} de Fischler et Rivoal. Nous y montrons comment utiliser le  théorème \ref{thm: orbital} pour obtenir l'analogue dans le cas des $M_q$-fonctions de certains de leurs résultats. Là encore, le théorème \ref{thm: orbital} permet de déduire de façon unifiée les résultats obtenus pour les $E$- et pour les $M_q$-fonctions. Dans la section \ref{sec: ex}, nous illustrons le théorème \ref{thm: orbital} à travers quelques exemples.

\section{Preuve du th\'eor\`eme \ref{thm: orbital} }\label{sec: thm}

Dans cette section, nous d\'emontrons le th\'eor\`eme \ref{thm: orbital}. 
Pour ce faire, nous rappelons quelques r\'esultats concernant les $E$- et les $M_q$-fonctions,  
puis nous introduisons la notion de $M_q$-op\'erateur (de niveau $R$) qui jouera dans la preuve du cas {\rm(${\rm M}_q$)} du th\'eor\`eme \ref{thm: orbital}, le r\^ole jou\'e par les 
$E$-op\'erateurs diff\'erentiels dans le cas (E).

\subsection{Rappels concernant la th\'eorie les $E$-fonctions}\label{sec: E}

\'Etant donn\'e un syst\`eme diff\'erentiel lin\'eaire d'ordre $1$ :
\begin{equation}\label{eq:DiffSystem}
{\bf Y}'(z)
=
A(z){\bf Y}(z)\,
,\quad\quad A(z) \in {\mathcal M}_m(\Q(z))\, ,
\end{equation} 
le point $\alpha\in\mathbb C$ est dit r\'egulier si la matrice $A(z)$ est bien d\'efinie en $\alpha$ et singulier sinon. 
Les singularit\'es d'un op\'erateur diff\'erentiel 
$$
L= a_0(z)+a_1(z)\delta+\cdots+a_m(z)\delta^{m} \in\Q[z,\delta] 
$$
sont les racines du polyn\^ome $a_m(z)$, lesquelles correspondent \'egalement aux singularités du syst\`eme diff\'erentiel associ\'e \`a la matrice compagnon de $L$ : 
$$
 A_L :=\begin{pmatrix}
0&1&0&\cdots&0\\
0&0&1&\ddots&\vdots\\
\vdots&\vdots&\ddots&\ddots&0\\
0&0&\cdots&0&1\\
-\frac{a_{0}(z)}{a_m(z)}& -\frac{a_{1}(z)}{a_m(z)}&\cdots & \cdots & -\frac{a_{m-1}(z)}{a_m(z)}
\end{pmatrix}  \,.
$$

Le r\'esultat fondamental de la th\'eorie des $E$-fonctions est le th\'eor\`eme de Siegel-Shidlovskii (voir, par exemple, \cite{Sh_Liv}). 

\begin{thmE1}\label{thm: trdegE}
	Soient $f_1(z),\ldots,f_m(z)$ des $E$-fonctions formant un vecteur solution d'un syst\`eme diff\'erentiel lin\'eaire de la forme \eqref{eq:DiffSystem} et 
	$\alpha \in \Q^*$ un point r\'egulier relativement \`a ce système. Alors, on a 
	$$
	{\rm deg.tr}_{\Q}(f_1(\alpha),\ldots,f_m(\alpha))={\rm deg.tr}_{\Q(z)}(f_1(z),\ldots,f_m(z))\,.
	$$
\end{thmE1}

Le th\'eor\`eme E1 est un \'enonc\'e quantitatif exprimant une \'egalit\'e de dimension, \`a savoir celle des dimensions de Krull des  anneaux 
$\Q[f_1(\alpha),\ldots,f_m(\alpha)]$ et $\Q(z)[f_1(z),\ldots,f_m(z)]$. Dans \cite{Be06}, Beukers a obtenu un raffinement remarquable du th\'eor\`eme de Siegel-Shidlovskii ; 
il s'agit  d'un \'enonc\'e qualitatif  
qui tient compte de chaque relation alg\'ebrique 
homog\`ene sur $\Q$ entre les  nombres $f_1(\alpha),\ldots,f_m(\alpha)$ et qui peut s'énoncer  comme suit. 

\begin{thmE2}\label{thm: liftingE} 
Sous les hypothèses du théorème E1, 
	toute relation alg\'ebrique homog\`ene sur $\Q$ entre les nombres $f_1(\alpha),\ldots,f_m(\alpha)$ est banale. 	
\end{thmE2}

Dans \cite{An1}, Andr\'e introduit la notion de $E$-op\'erateur diff\'erentiel et montre que toute $E$-fonction est annul\'ee par un tel op\'erateur. 
La d\'emonstration du th\'eor\`eme E2 par Beukers repose sur ce r\'esultat d'Andr\'e et plus pr\'ecis\'ement sur 
le fait que les seules singularit\'es d'un $E$-op\'erateur sont $0$ et $\infty$.  Andr\'e \cite{An3} a \'egalement montr\'e comment d\'eduire le th\'eor\`eme E2 du th\'eor\`eme 
de Siegel-Shidlovskii en développant une nouvelle correspondance de Galois pour les syst\`emes diff\'erentiels lin\'eaires. 

\subsection{Rappels concernant la th\'eorie des $M_q$-fonctions}

Tout comme dans le cas diff\'erentiel, il est parfois plus commode de travailler avec des 
syst\`emes $q$-mahl\'eriens lin\'eaire d'ordre $1$, plut\^ot qu'avec des \'equations  $q$-mahl\'eriennes. 
Un tel syst\`eme est de la forme :
\begin{equation}\label{eq: mahlersystem}
{\bf Y}(z^q)
=
A(z){\bf Y}(z)\,
,\quad\quad A(z) \in {\rm GL}_m(\Q(z))\, . 
\end{equation} 
Le point $\alpha\in\mathbb C$ est dit r\'egulier si la matrice $A(z)$ est bien d\'efinie et inversible en $\alpha^{q^\ell}$ pour tout $\ell\geq 0$.

L'analogue du th\'eor\`eme de Siegel-Shidlovskii dans ce cadre est un th\'eor\`eme de Ku.\ Nishioka \cite{Ni90}.

\begin{thmM1}\label{thm: trdegM}
	Soient $f_1(z),\ldots,f_m(z)$ des $M_q$-fonctions formant un vecteur solution d'un syst\`eme mahl\'erien lin\'eaire de la forme \eqref{eq: mahlersystem} 
	et $\alpha \in \Q$, $0<\vert \alpha\vert <1$, un point r\'egulier relativement \`a ce système. Alors, on a 
	$$
	{\rm deg.tr}_{\Q}(f_1(\alpha),\ldots,f_m(\alpha))={\rm deg.tr}_{\Q(z)}(f_1(z),\ldots,f_m(z))\,.
	$$
\end{thmM1}

L'analogue du th\'eor\`eme E2 a \'et\'e obtenu dans \cite{AF17} \`a partir d'une version l\'eg\`erement plus faible\footnote{La version obtenue dans \cite{PPH} ne tient pas compte du 
caract\`ere homog\`ene des relations.} d\'emontr\'ee par Philippon \cite{PPH} .

\begin{thmM2}\label{thm: liftingM} Sous les hypothèses du théorème M1, 
	 toute relation alg\'ebrique homog\`ene sur $\Q$ entre les nombres $f_1(\alpha),\ldots,f_m(\alpha)$ est banale. 	
\end{thmM2}

Dans \cite{AF17,PPH}, le théorème M2 est déduit du théorème M1 (voir également \cite{NS20} pour une démonstration analogue à celle donnée par André \cite{An3} du théorème E2). 
Une nouvelle preuve des th\'eor\`emes M1  et M2, ainsi que leurs g\'en\'eralisations \`a des fonctions mahl\'eriennes de plusieurs variables, ont \'et\'e r\'ecemment obtenues 
par les auteurs (voir \cite{AF20,AF23}). Dans cette approche, on démontre directement le théorème~M2 et on en déduit le théorème M1 par un argument déjà utilisé par Shidlovskii dans sa preuve du théorème~E1. 

\subsection{$M_q$-op\'erateurs}

Un $\sigma_q$-op\'erateur est un op\'erateur de la forme 
$$
L = a_0(z) + a_1(z)\sigma_q+\cdots + a_m(z)\sigma_q^m \in \Q[z,\sigma_q]\,.
$$
Un point $\alpha\in\mathbb C$ est dit régulier pour $L$ s'il l'est pour le syst\`eme $q$-mahl\'erien associ\'e \`a la matrice compagnon $A_L$,  
ou de fa\c con \'equivalente si 
$$
a_0(\alpha^{q^\ell})a_m(\alpha^{q^\ell})\not= 0 \quad\quad \forall \ell\geq 0\, .
$$
Dans le cas contraire, on dit que $\alpha$ est une singularit\'e de $L$.  

Dans toute la suite, $D(0,R)\subset \mathbb C$ désignera le disque ouvert de rayon $R$ et on notera $D^*(0,R):=D(0,R)\setminus\{0\}$ le disque épointé associé. 

\begin{defi}
Soit $R$, $0<R\leq 1$ un nombre r\'eel. Un $M_q$-op\'erateur de niveau $R$ est un $\sigma_q$-op\'erateur de la forme 
$$
a_0(z) + a_1(z)\sigma_q+\cdots + a_m(z)\sigma_q^m  \in \Q[z,\sigma_q]\,,
$$
tel que le polyn\^ome $a_0(z)$ n'a aucune racine dans $D^{*}(0,R)$. 
\end{defi}

\begin{lem}\label{lem: mqop}
Soit $R$, $0<R\leq 1$, un nombre r\'eel. Toute solution dans $\Q[[z]]$ d'un $M_q$-op\'erateur de niveau $R$ est analytique sur $D(0,R)$.
\end{lem}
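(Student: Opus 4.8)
The plan is to proceed by analytic continuation, exploiting that the substitutions $z\mapsto z^{q^i}$ contract the punctured disk $D^*(0,R)$ towards the origin. Let $f\in\Q[[z]]$ be a solution of an $M_q$-operator $L=a_0(z)+a_1(z)\sigma_q+\cdots+a_m(z)\sigma_q^m$ of level $R$, so that
\begin{equation}
a_0(z)f(z)=-\bigl(a_1(z)f(z^q)+\cdots+a_m(z)f(z^{q^m})\bigr)\tag{$\star$}
\end{equation}
as an identity in $\Q[[z]]$. Since $a_0$ has no root in $D^*(0,R)$ it is in particular nonzero, hence $L\neq 0$ and $f$ is an $M_q$-fonction; by \cite[Lemma 4]{BCR} it converges, so it is analytic on some disk $D(0,\rho_0)$ with $\rho_0>0$. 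I would then bootstrap this initial radius up to $R$.

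The core is the following extension step: \emph{if $f$ is analytic on $D(0,\rho)$ for some $0<\rho<R$, then it extends analytically to $D(0,\min(\rho^{1/q},R))$.} Indeed, for $|z|<\rho^{1/q}$ one has $|z|<1$ and $|z^{q^i}|=|z|^{q^i}\leq|z|^q<\rho$ for every $i\geq 1$, so the right-hand side $g(z)$ of $(\star)$ is a finite sum of polynomials times the analytic functions $z\mapsto f(z^{q^i})$, hence analytic on all of $D(0,\rho^{1/q})$. By $(\star)$, $g$ and $a_0f$ agree on $D(0,\rho)$, so $g$ is the analytic continuation of $a_0f$ to $D(0,\rho^{1/q})$. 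Dividing by $a_0$, which is nonvanishing on $D^*(0,R)$, gives an analytic function on $D^*(0,\min(\rho^{1/q},R))$ coinciding with $f$ near $0$; the possible zero of $a_0$ at the origin is harmless, since $g=a_0f$ vanishes there to order at least $\operatorname{ord}_0(a_0)$, making the singularity of $g/a_0$ at $0$ removable. This yields the claimed continuation of $f$.

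It then remains to iterate. Setting $\rho^*=\sup\{\rho>0:\ f\text{ extends analytically to }D(0,\rho)\}$, we have $\rho^*>0$ and $f$ is analytic on $D(0,\rho^*)=\bigcup_{\rho<\rho^*}D(0,\rho)$. If $\rho^*<R$, then $\rho^*<1$, and applying the extension step at $\rho=\rho^*$ makes $f$ analytic on $D(0,\min((\rho^*)^{1/q},R))$. Since $q\geq 2$ and $0<\rho^*<1$ give $(\rho^*)^{1/q}>\rho^*$, while $R>\rho^*$, this disk strictly contains $D(0,\rho^*)$, contradicting the maximality of $\rho^*$. Hence $\rho^*\geq R$ and $f$ is analytic on $D(0,R)$, as desired.

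The only genuinely delicate point is the behaviour at the origin in the extension step: because $a_0$ may vanish at $z=0$, dividing the continued function $g$ by $a_0$ is legitimate only once one knows that $g$ still vanishes at $0$ to order at least $\operatorname{ord}_0(a_0)$. This is guaranteed by the identity-theorem comparison $g=a_0f$ on the smaller disk $D(0,\rho)$, but it is the step that must be argued with care. The remainder is a routine use of the identity theorem together with the elementary contraction inequality $|z|^{q^i}<|z|$ valid for $0<|z|<1$ and $i\geq 1$.
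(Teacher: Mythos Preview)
Your argument is correct and follows essentially the same route as the paper's proof: rewrite $f(z)=-a_0(z)^{-1}\sum_{i=1}^m a_i(z)f(z^{q^i})$ and exploit the fact that $f(z^{q^i})$ converges on a larger disk together with the absence of zeros of $a_0$ in $D^*(0,R)$. The paper phrases this as a one-line proof by contradiction on the radius of convergence, whereas you unfold it into an explicit bootstrap via a supremum, and you are more explicit about why a possible zero of $a_0$ at the origin causes no trouble; these are stylistic rather than substantive differences.
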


\begin{proof}
Soit $L:=  a_0(z) + a_1(z)\sigma_q+\cdots + a_m(z)\sigma_q^m$ un $M_q$-op\'erateur de niveau $R$ et $f(z)\in \Q[[z]]$ une $M_q$-fonction annul\'ee par $L$. Raisonnons par l'absurde en supposant  que le rayon de convergence de $f$ soit \'egal \`a $\rho<R$. Notons que $\rho>0$ puisque $f(z)\in \Q[[z]]$ et que toute série formelle solution d'une équation mahlérienne linéaire est convergente. En \'ecrivant
$$
f(z)=\frac{-1}{a_0(z)} \left(a_1(z)f(z^q)+\cdots +a_m(z)f(z^{q^m})\right)\,,
$$
on constate  que le membre de droite \`a un rayon de convergence au moins \'egal \`a $\max(\rho^{1/q},R)>\rho$, puisque par d\'efinition  $a_0(z)$ ne s'annule pas sur 
$D^{*}(0,R)$. 
Ceci contredit notre hyptoh\`ese. 
\end{proof}

Le lemme \ref{lem: mqop} admet la r\'eciproque suivante. 

\begin{prop}\label{prop: mqop}
Soit $R$, $0<R\leq 1$, un nombre r\'eel. 
Toute $M_q$-fonction qui est analytique sur  $D(0,R)$ est annul\'ee par un $M_q$-op\'erateur de niveau $R$.
\end{prop}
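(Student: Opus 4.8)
Le plan est de partir d'un op\'erateur annulateur d'ordre minimal et de le \emph{d\'esingulariser} le long de $D^*(0,R)$. Soit $P=p_0(z)+p_1(z)\sigma_q+\cdots+p_m(z)\sigma_q^m\in\Q[z,\sigma_q]$ un $\sigma_q$-op\'erateur d'ordre minimal annulant $f$, \`a coefficients premiers entre eux ; en utilisant l'injectivit\'e de $\sigma_q$, on v\'erifie que son terme constant $p_0$ est non nul. L'anneau de Ore $\Q(z)[\sigma_q]$ \'etant euclidien \`a droite (le coefficient dominant de $P$ est inversible dans $\Q(z)$), la division \`a droite par $P$, jointe \`a la minimalit\'e, montre que tout op\'erateur de $\Q(z)[\sigma_q]$ annulant $f$ est un multiple \`a gauche $QP$ de $P$. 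Or, si $Q=q_0(z)+q_1(z)\sigma_q+\cdots$, le coefficient de $\sigma_q^0$ dans $QP$ vaut $q_0(z)p_0(z)$, les termes $q_j\sigma_q^j$ avec $j\geq 1$ ne contribuant qu'aux puissances $\sigma_q^k$, $k\geq 1$. Fabriquer un $M_q$-op\'erateur de niveau $R$ annulant $f$ revient donc \`a trouver un multiple \`a gauche $L=QP\in\Q[z,\sigma_q]$ dont le coefficient $a_0$ n'a plus de racine dans $D^*(0,R)$, c'est-\`a-dire \`a effacer de $p_0$, par passage \`a un multiple \`a gauche convenable, ses racines situ\'ees dans $D^*(0,R)$.

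La cl\'e est que ces racines sont des singularit\'es \emph{apparentes} de $P$. Notons $\pi(z)\in\Q[z]$ le produit $\prod_\beta (z-\beta)$, o\`u $\beta$ parcourt les racines de $p_0$ appartenant \`a $D^*(0,R)$ compt\'ees avec multiplicit\'e, et \'ecrivons $p_0=\pi\,\tilde p_0$ (ceci a un sens puisque $\Q=\overline{\mathbb Q}$, sans contrainte de conjugaison galoisienne). L'application $z\mapsto z^q$ envoie $D(0,R)$ dans lui-m\^eme en contractant vers $0$, de sorte que l'orbite directe $\{\beta^{q^\ell}\}_{\ell\geq 0}$ d'une telle racine reste dans $D^*(0,R)$ et tend vers $0$. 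Comme $f$ est analytique sur $D(0,R)$, les fonctions $\sigma_q^i f=f(z^{q^i})$ le sont aussi, et la relation fonctionnelle
$$
p_0(z)f(z)=-\bigl(p_1(z)\sigma_q f+\cdots+p_m(z)\sigma_q^m f\bigr)
$$
montre que la fonction analytique $\sum_{i=1}^m p_i\,\sigma_q^i f$, qui vaut $-\pi\,\tilde p_0\,f$, est divisible par $\pi$ dans l'anneau des fonctions analytiques sur $D(0,R)$. Ce sont pr\'ecis\'ement ces relations de divisibilit\'e, impos\'ees par l'analyticit\'e de $f$, qui expriment le caract\`ere apparent des singularit\'es de $P$ aux racines de $\pi$ et qui doivent permettre leur effacement par un multiple \`a gauche.

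J'effectuerais alors cet effacement par r\'ecurrence sur la multiplicit\'e totale des racines de $p_0$ dans $D^*(0,R)$, c'est-\`a-dire sur $\deg\pi$. \`A chaque \'etape, le caract\`ere apparent fournit un multiplicateur \`a gauche $Q\in\Q(z)[\sigma_q]$, que l'on peut prendre d'ordre minimal, tel que $L=QP$ soit \`a coefficients polynomiaux et que le coefficient $a_0$ de $L$ poss\`ede strictement moins de racines dans $D^*(0,R)$ que $p_0$ : c'est l'analogue, pour les $\sigma_q$-op\'erateurs, du lemme classique de d\'esingularisation des op\'erateurs de Ore. En r\'eit\'erant, on aboutit \`a un op\'erateur annulant $f$ dont le coefficient $a_0$ ne s'annule pas sur $D^*(0,R)$, autrement dit \`a un $M_q$-op\'erateur de niveau $R$, ce qui conclut. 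Le lemme \ref{lem: mqop} garantit d'ailleurs la coh\'erence de la construction, puisque $f$ est bien analytique sur $D(0,R)$.

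Le point difficile est la construction effective du multiplicateur $Q$ effa\c{c}ant le facteur $\pi$ du coefficient constant tout en pr\'eservant le caract\`ere polynomial de \emph{tous} les coefficients de $QP$ : c'est l\`a qu'interviennent de fa\c{c}on essentielle les relations de divisibilit\'e ci-dessus, ainsi que la position relative des racines de $\pi$ et de celles des coefficients d\'ecal\'es $p_i(z^{q^k})$. Le cas limite $R=1$ m\'erite une attention particuli\`ere : les orbites directes s'accumulent alors sur le cercle unit\'e sans jamais sortir d'un disque de rayon strictement inf\'erieur, de sorte qu'on ne peut pas se contenter de \emph{repousser} les singularit\'es vers le bord ; l'effacement devra reposer v\'eritablement sur la finitude de l'ensemble des racines et sur l'analyticit\'e de $f$ dans tout le disque.
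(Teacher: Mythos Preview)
Your plan --- desingularize the trailing coefficient of a minimal annihilator by passing to a left multiple --- is natural, but the step you yourself call \og le point difficile\fg{} is not a detail: it is the entire content of the proposition, and your sketch does not supply it. You correctly note that the constant term of any left multiple $QP$ is $q_0(z)p_0(z)$; hence to erase a root $\beta\in D^*(0,R)$ of $p_0$ you must take $q_0$ with a pole at $\beta$ and then arrange for \emph{all} the remaining coefficients $\sum_{i+j=k} q_j(z)\,p_i(z^{q^j})$ of $QP$ to be polynomial. The analytic divisibility you invoke --- that $\sum_{i\geq 1} p_i\,\sigma_q^i f$ is divisible by $\pi$ in the ring of analytic functions --- is immediate from the equation and the analyticity of $f$, but it is an \emph{analytic} statement about values of $f$ along the orbit of $\beta$, not an algebraic identity among the $p_i$; it does not by itself produce the required $Q$. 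The \og lemme classique de d\'esingularisation des op\'erateurs de Ore\fg{} you appeal to concerns the \emph{leading} coefficient (apparent singularities in the usual sense), and the asymmetry of $\sigma_q$ makes the trailing case genuinely different.

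The paper proceeds by a different route that makes the depth of this step explicit. It introduces the \emph{d\'enominateur $q$-mahl\'erien} $\mathfrak d$ of $f$ (from \cite{ABS}), which is precisely the monic generator of the ideal of admissible trailing coefficients, and then invokes \cite[Proposition~6.4]{ABS}: any root $\lambda$ of $\mathfrak d$ with $0<|\lambda|<1$ forces the radius of convergence of $f$ to be at most $|\lambda|$. The authors stress in the remark following the proof, and again in \S2.5, that this proposition of \cite{ABS} itself relies on the th\'eor\`eme~M2, and that they \emph{do not know} how to obtain such operators independently of M2. In other words, a successful elementary desingularization of the kind you outline would amount to a new, transcendence-free proof of the proposition --- something the paper explicitly leaves open. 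As it stands, your sketch does not close this gap.
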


\begin{proof}
Le \emph{d\'enominateur $q$-mahl\'erien} d'une $M_q$-fonction $f$ est d\'efini
 comme le g\'en\'erateur (choisi unitaire) de l'id\'eal principal 
$$
\left\{p(z) \in \Q[z] \ : \ p(z)f(z) \in \sum_{k\geq 1} \Q[z]f(z^{q^k})\right\}\,.
$$
Cette notion est introduite dans \cite{ABS}. Notons $\mathfrak d(z)$ le d\'enominateur $q$-mahl\'erien de $f$. 
D'apr\`es \cite[Proposition 6.4]{ABS}, si $\mathfrak d$ a une racine $\lambda$ telle que $0<\vert \lambda\vert <1$, 
alors $f$ a un rayon de convergence strictement inf\'erieur \`a un. En r\'ealit\'e, la preuve de cette proposition \'etablit explicitement que le rayon de convergence de $f$ est alors 
au plus $\vert\lambda\vert$. 
Ainsi si $f$ est analytique sur le disque $D(0,R)$, son d\'enominateur $q$-mahl\'erien ne peut s'annuler sur ce disque. On en d\'eduit l'existence d'un $M_q$-op\'erateur de niveau $R$, 
de la forme 
$$
L= \mathfrak d(z)+ a_1(z)\sigma_q+\cdots a_m(z)\sigma_q^m\,,
$$
qui annule $f$. 
\end{proof}

\begin{rem}
La proposition 6.4 de \cite{ABS} sur laquelle repose la démonstration précédente est loin d'être triviale : elle nécessite l'utilisation du théorème~M2. 
\end{rem}

\begin{prop}\label{prop: mreg}
        Soit $f$ une $M_q$-fonction analytique sur $D(0,R)$, $0<R < 1$.   Alors $f$ est annul\'ee par un $\sigma_{q}$-op\'erateur $L$ sans singularités sur $D^*(0,R)$.   
\end{prop}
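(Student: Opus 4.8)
The plan is to start from the $M_q$-operator of level $R$ furnished by Proposition \ref{prop: mqop} and to repair its leading coefficient by a shift, the point being that Proposition \ref{prop: mqop} already controls the constant coefficient but says nothing about the leading one. If $f=0$ the operator $L=1$ does the job, so assume $f\neq 0$. By Proposition \ref{prop: mqop}, $f$ is annihilated by an $M_q$-operator
$$
L=a_0(z)+a_1(z)\sigma_q+\cdots+a_m(z)\sigma_q^m
$$
of level $R$; thus $a_0$ has no root in $D^*(0,R)$, and since $f\neq 0$ a nonzero operator of order $0$ would force $f=0$, so $m\geq 1$ and $a_m\neq 0$. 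The only obstruction to regularity of $L$ on $D^*(0,R)$ is therefore that $a_m$ may vanish somewhere in $D^*(0,R)$.

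The key observation is that precomposition by $\sigma_q$ pushes the roots of the leading coefficient towards the unit circle. Using the commutation rule $\sigma_q\,a(z)=a(z^q)\,\sigma_q$ in $\Q[z,\sigma_q]$, one computes
$$
\sigma_q^k L=\sum_{i=0}^m a_i(z^{q^k})\,\sigma_q^{k+i},
$$
whose leading coefficient is $a_m(z^{q^k})$. A point $w\in D^*(0,R)$ is a root of $a_m(z^{q^k})$ if and only if $w^{q^k}$ is a nonzero root of $a_m$, so the nonzero roots $\lambda$ of $a_m$ contribute roots of modulus $|\lambda|^{1/q^k}$. Writing $\rho$ for the least modulus of a nonzero root of $a_m$ (and taking any $k$ if $a_m$ has none), the condition that $a_m(z^{q^k})$ have no root in $D^*(0,R)$ reads $R^{q^k}\leq\rho$. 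Since $0<R<1$ one has $R^{q^k}\to 0$, so this holds for all sufficiently large $k$; this is exactly where the strict inequality $R<1$ is used.

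It then suffices to combine the shifted and unshifted operators. Fixing such a large $k$, set $L':=L+\sigma_q^k L$. As $\sigma_q^k$ raises every order by $k\geq 1$, the term of order $0$ in $L'$ comes only from $L$ and equals $a_0(z)$, while the term of highest order $m+k$ comes only from $\sigma_q^k L$ and equals $a_m(z^{q^k})$; in particular $L'\neq 0$, and both its constant and its leading coefficient have no root in $D^*(0,R)$. Moreover $L'(f)=L(f)+\sigma_q^k\bigl(L(f)\bigr)=0$, so $L'$ annihilates $f$.

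To conclude that $L'$ has no singularity on $D^*(0,R)$, I would note that for $\alpha\in D^*(0,R)$ and $\ell\geq 0$ one has $|\alpha^{q^\ell}|\leq|\alpha|<R$ and $\alpha^{q^\ell}\neq 0$, hence $\alpha^{q^\ell}\in D^*(0,R)$; since neither the constant nor the leading coefficient of $L'$ vanishes on $D^*(0,R)$, both are nonzero at $\alpha^{q^\ell}$, which is precisely the regularity of $\alpha$. Thus no point of $D^*(0,R)$ is a singularity of $L'$, as required. The only step that really needs care is the root count of the second paragraph: one must check that the $\sigma_q$-shift drives every nonzero root of $a_m$ out of $D^*(0,R)$ while leaving the already satisfactory constant coefficient untouched, which is what forces both the hypothesis $R<1$ and the passage to the sum $L+\sigma_q^k L$ rather than to $\sigma_q^k L$ alone, the latter destroying the constant coefficient.
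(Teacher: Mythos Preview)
Your proof is correct and follows essentially the same approach as the paper: start from the $M_q$-operator of level $R$ given by Proposition~\ref{prop: mqop}, apply a sufficiently high power of $\sigma_q$ to push the roots of the leading coefficient outside $D^*(0,R)$, and add the shifted operator to the original one so as to preserve the constant coefficient while fixing the leading one. The paper's argument is identical up to notation (it uses $s$ for your $k$ and writes $L_1+L_2$ with $L_2=\sigma_q^s(L_1)$); your version is slightly more detailed in handling the trivial case $f=0$ and in justifying the existence of a suitable $k$.
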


\begin{proof}
D'apr\`es la proposition \ref{prop: mqop}, il existe un $M_{q}$-op\'erateur de niveau $R$ qui annule $f$. Notons 
$$
L_1:= a_0(z)+a_1(z)\sigma_{q}+\cdots +a_m(z)\sigma_{q}^m
$$
un tel op\'erateur. 
Consid\'erons un entier $s\geq 1$ tel que $a_m(z^{q^s})$ n'a aucune racine dans $D^*(0,R)$ et posons
$$
L_2 := \sigma_q^s(L_1)=a_0(z^{q^s})\sigma_{q}^s+\cdots +a_m(z^{q^s})\sigma_{q}^{m+s}\,.
$$
Puisque $L_1(f)=0$, on a \'egalement $L_2(f)=0$ et l'op\'erateur $L:=L_1+L_2$ annule donc $f$. De plus, on a 
$$
L= b_0(z)+b_1(z)\sigma_{q}+\cdots+b_{m+s}\sigma_{q}^{m+s}\,,
$$
o\`u  $b_0(z)=a_0(z)$ et $b_{m+s}(z)=a_m(z^{q^s})$.  

Soit $\alpha\in D^*(0,R)$. Comme les polyn\^omes $a_0(z)$ et $a_m(z^{q^s})$ ne s'annulent pas sur $D^*(0,R)$, on obtient que 
$$
b_0(\alpha^{q^\ell})b_{m+s}(\alpha^{q^\ell})\not=0 \,\quad\quad\forall \ell\geq 0\,.
$$
Ainsi $\alpha$ est un point r\'egulier pour $L$. 
\end{proof}

\subsection{Preuve du th\'eor\`eme \ref{thm: orbital}}

Nous pouvons \`a pr\'esent d\'emontrer notre r\'esultat principal.

\begin{proof}[D\'emonstration du th\'eor\`eme \ref{thm: orbital}] 
Commen\c cons par  le cas (E). 
Con\-sidérons des $E$-fonctions $f_1,\ldots,f_r$,  un nombre $\alpha\in \Q^*$  et supposons qu'il existe un polynôme homogène $P\in\Q[X_1,\ldots,X_r]$  tel que 
\begin{equation}\label{eq: rel}
P(f_1(\alpha),\ldots,f_r(\alpha))=0\,.
\end{equation}
D'apr\`es \cite[Th\'eor\`eme 4.2]{An1}, toute $E$-fonction $f$ est solution d'un $E$-op\'erateur diff\'erentiel. De plus, le th\'eor\`eme 4.3 de \cite{An1} stipule 
qu'un tel op\'erateur  n'a que deux singularit\'es : 
$0$ et $\infty$. Cela signifie que  $f$ est annul\'ee par un op\'erateur diff\'erentiel  de la forme 
$$
L:= a_0(z) +a_1(z)\delta+\cdots + a_{m-1}(z)\delta^{m-1}+z^\nu \delta^{(m)}\,,
$$	
où les $a_i(z)$ sont dans $\Q[z]$ et $\nu\geq 0$ est un entier.  
Ainsi, le syst\`eme diff\'erentiel associ\'e \`a la matrice compagnon $A_L$ d'une telle \'equation est \`a coefficients dans $\Q[z,1/z]$.

Pour tout $i$, $1\leq i\leq r$, il existe donc une matrice $A_i(z) \in {\mathcal M}_{m_i}(\Q[z,1/z])$ telle que 
$$
\left(\begin{array}{c}\delta(f_i) \\ \vdots \\ \delta^{m_i}(f_i)\end{array}\right)
=
A_i(z)\left(\begin{array}{c}f_i \\ \vdots \\ \delta^{m_i-1}(f_i)\end{array}\right) \, ,
$$
o\`u $m_i$ peut \^etre choisi comme le minimum des ordres des $E$-op\'erateurs annulant $f_i$. 
Notons 
$$	A(z):=A_1(z)\oplus A_2(z)\oplus\cdots \oplus A_r(z)
$$
la somme directe des matrices $A_i(z)$.  
Les fonctions $f_{i,j}(z)=\delta^j(f_i)$, $1\leq i \leq r$, $0 \leq j \leq m_i-1$, sont les coordonn\'ees d'un vecteur solution du syst\`eme diff\'erentiel associ\'e \`a la matrice $A(z)$. 
Comme $A(z)$ est \`a coefficients dans $\Q[z,1/z]$, tout point $\alpha\in\Q^*$ est r\'egulier pour ce syst\`eme.  
D'après \eqref{eq: rel} et le th\'eor\`eme E2, il existe un polyn\^ome $Q \in \Q[z,(X_{i,j})_{1\leq i \leq r,0\leq j <m_i}]$ homog\`ene en les variables $X_{i,j}$, $1\leq i \leq r$, 
$0\leq j <m_i$, tel que 
$$
Q(z,f_{i,j}(z))=0, \quad \text{ et } \quad Q(\alpha,(X_{i,j})_{1\leq i \leq r,0\leq j <m_i})=P(X_{1,0},\ldots,X_{r,0})\,.
$$
Ainsi, il suit que soit $Q\in\mathfrak I(f_1,\ldots,f_r)$ et la relation \eqref{eq: rel} est banale, soit $Q\not\in\mathfrak I(f_1,\ldots,f_r)$ et  \eqref{eq: rel} s'obtient alors par 
dégénérescence au point $\alpha$ d'une relation $\delta$-alg\'ebrique homog\`ene entre les fonctions $f_1,\ldots,f_r$.


\medskip

Passons au cas {\rm(${\rm M}_q$)}. Soient $f_1,\ldots,f_r$ des $M_q$-fonctions et $\alpha\in \Q$, $0<\vert \alpha\vert<1$, qui n'est p\^ole d'aucune de ces fonctions.  
On suppose qu'il existe $P\in\Q[X_1,\ldots,X_r]$ homog\`ene tel que 
\begin{equation}\label{eq: relm}
P(f_1(\alpha),\ldots,f_r(\alpha))=0\,.
\end{equation}

Choisissons un nombre r\'eel $R$ tel que $0<\vert\alpha\vert<R<1$. Comme les fonctions $f_i$ n'ont qu'un nombre fini de pôles sur $D(0,R)$,  il existe un polynôme $D(z)\in\Q[z]$ tel que $g_i(z):=D(z)f_i(z)$ est analytique sur $D(0,R)$ pour tout $i$, $1\leq i\leq r$, et $D(\alpha)=1$. 
D'après \eqref{eq: relm}, on a 
\begin{equation}\label{eq: relm2}
P(g_1(\alpha),\ldots,g_r(\alpha))=0\,.
\end{equation}

D'autre part, la proposition \ref{prop: mreg} assure que chaque fonction $g_i$ 
est annul\'ee par un $\sigma_{q}$-op\'erateur $L_i$, d'ordre not\'e $m_i$, qui est r\'egulier au point $\alpha$. En d\'esignant par $A_i(z)$ la matrice compagnon de l'op\'erateur $L_i$, on obtient  
que les fonctions $g_i,\sigma_{q}(g_i)\ldots,\sigma_{q}^{m_i-1}(g_i)$ forment un vecteur solution  d'un syst\`eme $q$-mahl\'erien 
\begin{equation*}
{\bf Y}(z^{q})
=
A_i(z){\bf Y}(z)
,\quad A_i(z) \in {\rm GL}_{m_i}(\Q(z))\, . 
\end{equation*} 
pour lequel  $\alpha$ est r\'egulier. 
Notons 
$$	A(z):=A_1(z)\oplus A_2(z)\oplus\cdots \oplus A_r(z)
$$
la somme directe des matrices $A_i(z)$.  
Les fonctions $f_{i,j}(z) :=\sigma_{q}^j(g_i)$, $1\leq i \leq r$, $0 \leq j \leq m_i-1$, sont les coordonn\'ees  d'un vecteur solution du syst\`eme $q$-mahl\'erien 
associ\'e \`a la matrice $A(z)$ et le point $\alpha$ est encore r\'egulier pour ce syst\`eme.   
D'après \eqref{eq: relm2} et le th\'eor\`eme M2,  il existe un polyn\^ome $Q_0 \in \Q[z,(X_{i,j})_{1\leq i \leq r,0\leq j <m_i}]$ homog\`ene en les variables 
$X_{i,j}$, $1\leq i \leq r$, 
$0\leq j <m_i$, tel que 
$$
Q_0(z,f_{i,j}(z))=0 \quad \text{ et } \quad Q_0(\alpha,(X_{i,j})_{1\leq i \leq r,0\leq j <m_i})=P(X_{1,0},\ldots,X_{r,0})\,.
$$
Définissons $Q_1\in \Q[z,(X_{i,j})_{1\leq i \leq r,0\leq j <m_i}]$ par  
$$Q_1(z,(X_{i,j})_{1\leq i \leq r,0\leq j <m_i}):=Q_0(z,(D(z^{q^j})X_{i,j})_{1\leq i \leq r,0\leq j <m_i})\,.$$  
On a alors 
$$Q_1(z,\sigma_q^j(f_i)(z))=0 \;\;\mbox{ et }\;\; Q_1(\alpha,\sigma_q^j(f_i)(\alpha))=P(f_1(\alpha),\ldots,f_r(\alpha))\,,$$ 
puisque $D(\alpha)=1$. 
On a donc que soit $Q_1\in\mathfrak I(f_1,\ldots,f_r)$ et la relation \eqref{eq: relm} est banale, soit $Q\not\in\mathfrak I(f_1,\ldots,f_r)$ et  \eqref{eq: relm} s'obtient alors par 
dégénérescence au point $\alpha$ d'une relation $\sigma_q$-alg\'ebrique homog\`ene entre les fonctions $f_1,\ldots,f_r$.
\end{proof} 

\subsection{Remarque sur les preuves des théorèmes E2 et M2} 

Dans \cite{Be06}, Beukers prouve le théorème E2 de la façon suivante. 
D'une part,  il montre que pour toute $E$-fonction $f$ et tout $\alpha\in \Q^*$ :
\begin{eqnarray}\label{eq: implies}
f(\alpha)=0 \implies & \mbox{$\alpha$ est une singularité de }
\\
\nonumber & \mbox{l'opérateur différentiel minimal annulant $f$}.
\end{eqnarray}
 D'autre part, en supposant par l'absurde que la conclusion du théorème E2 est fausse, il donne une construction élémentaire et générale d'une 
 $E$-fonction qui contredit \eqref{eq: implies}. 
Il ressort ainsi que le point clé pour démontrer le théorème E2 est d'obtenir l'implication \eqref{eq: implies}. 
Celle-ci découle du fait que pour toute $E$-fonction $f$ et tout $\alpha\in \Q^*$, $f$ est annulée par un opérateur différentiel régulier en $\alpha$ ; propriété 
elle-même garantie par l'existence  d'un $E$-opérateur annulant $f$ (cf. \cite[Th\'eor\`eme 4.2]{An1}). 

En adaptant l'argument de \cite{Be06}, on obtient que le théorème M2 découle du fait que 
pour toute $M_q$-fonction $f$ et tout $\alpha\in\Q$, $0<\vert \alpha\vert <1$: 
\begin{eqnarray}\label{eq: implies2}
f(\alpha)=0 \implies & \mbox{$\alpha$ est une singularité du }
\\
\nonumber & \mbox{$\sigma_q$-opérateur minimal annulant $f$}.
\end{eqnarray}
Montrons que cette propriété est une conséquence de la proposition \ref{prop: mqop}. 
Soit  
\begin{equation}\label{eq: eqminf}
a_0(z)f(z)+\cdots+a_n(z)f(z^{q^n})=0 \,,
\end{equation}
l'équation minimale de $f$. 
Supposons par l'absurde que $f(\alpha)=0$ et que $\alpha$ est régulier pour cette équation, c'est-\`a-dire que 
\begin{equation}\label{eq: regalpha}
a_0(\alpha^{q^\ell})a_n(\alpha^{q^\ell})\not=0 \,, \quad\quad \forall\ell\geq 0. 
\end{equation}
Posons $g(z):=f(z)/(z-\alpha)$. D'après \eqref{eq: eqminf}, $g$ est annulée par le $\sigma_q$-opérateur 
\begin{equation}\label{eq: eqming}
L_1:=a_0(z)(z-\alpha) +\cdots +a_n(z)(z^{q^n}-\alpha)\sigma_q^n=0\,.
\end{equation}
Soit $R$ tel que $\vert \alpha\vert <R<1$. Comme $g$ est bien définie en $\alpha$, il existe $P(z)\in\Q[z]$ tel que $P(z)g(z)$ est analytique sur $D(0,R)$ et $P(\alpha)=1$. 
D'après la proposition \ref{prop: mqop}, 
$P(z)g(z)$ est annulée par un $M_q$-opérateur de niveau $R$. On en déduit  l'existence d'un $\sigma_{q}$-opérateur  
\begin{equation}\label{eq: g2}
L_2:=b_0(z)+b_1(z)\sigma_{q}+\cdots+b_m(z)\sigma_{q}^m 
\end{equation}
qui annule $g$ et tel que $b_0(\alpha)\not=0$. 
Considérons un tel opérateur avec $m$ minimal. Notons que $m\geq n$. Posons $$L_3:=b_m(z)\sigma^{m-n}L_1 - a_n(z^{q^{m-n}})(z^{q^m}-\alpha)L_2 \,.$$   
On obtient que $L_3$ annule $g$ et est d'ordre strictement inférieur à $m$. De plus, on vérifie que le terme constant de $L_3$ vaut 
$$
a_n(z^{q^{m-n}})(z^{q^m}-\alpha)b_0(z)
$$ 
si $m>n$ et 
$$
a_n(z^{q^{m-n}})(z^{q^m}-\alpha)b_0(z) - b_m(z)a_0(z)(z-\alpha)
$$
lorsque $m=n$. Dans les deux cas,  \eqref{eq: regalpha} assure que ce terme ne s'annule pas en $\alpha$.  
Cela contredit la minimalité de $m$ et prouve donc \eqref{eq: implies2}.

Il faut prendre garde au fait que nous n'obtenons pas pour autant une preuve \og sans transcendance\fg{} du théorème M2 \`a la façon de \cite{An2,Be06}. En effet, la démonstration que nous avons donnée de la proposition \ref{prop: mqop} repose sur le théorème M2 et l'argument serait  donc circulaire. Nous attirons simplement l'attention sur le fait que, \emph{in fine}, le théorème M2 et la proposition \ref{prop: mqop} sont essentiellement équivalents, de même que le théorème E2 et le fait que pour toute $E$-fonction $f$ et tout $\alpha\in\Q^*$, $f$ est annulée par un opérateur différentiel régulier en $\alpha$. 
Dans le cas des $E$-fonctions, ce sont les résultats profonds obtenus sur les $G$-opérateurs qui 
permettent d'obtenir  \og miraculeusement\fg{} l'existence de tels opérateurs, via la notion de $E$-opérateur (cf. \cite{An1}).  
Dans le cas des $M_q$-fonctions, nous ne savons pour le moment pas comment obtenir l'existence de ces opérateurs indépendamment du théorème M2.

\section{Détermination effective des relations algébriques}\label{sec: eff}

Dans cette section, nous discutons brièvement la question suivante, ainsi que son analogue pour les $M_q$-fonctions : étant donnés  $\alpha\in\Q^*$ et des $E$-fonctions $f_1,\ldots,f_r$, peut-on déterminer les relations algébriques sur $\Q$ entre les nombres $f_1(\alpha),\ldots,f_r(\alpha)$ ? Il s'agit donc de déterminer explicitement une base de l'idéal 
$$
\mathfrak I(f_1(\alpha),\ldots,f_r(\alpha) ):= \{P\in \Q[X_1,\ldots,X_r] : P(f_1(\alpha),\ldots,f_r(\alpha))=0\}\,.
$$
L'article \cite{FR19} décrit un algorithme pour répondre à cette question dans le cas des $E$-fonctions. 
Nous décrivons une approche alternative fondée sur le théorème \ref{thm: orbital} qui, nous semble-t-il, est particulièrement limpide. L'utilisation du théorème \ref{thm: orbital} permet à nouveau de traiter les $E$- et les $M_q$-fonctions de façon unifiée. Notons que ces deux approches souffrent d'une même limite liée à l'utilisation 
de l'algorithme de Hrushovski-Feng \cite{Fe15} (ou de Feng \cite{Fe18} pour les $M_q$-fonctions).

\begin{rem}
Comme dans \cite{AR18}, on supposera que chaque fonction $f_i$ est  donnée par un opérateur différentiel linéaire $L_i$ qui l'annule et 
les premiers coefficients de son développement de Taylor à l'origine. On supposera que le nombre de coefficients connus est suffisamment grand pour que $f_i$ soit uniquement déterminée 
par ces données. 
\end{rem}

 \'Etant donn\'e un id\'eal $\mathfrak I$ de $\Q(z)[X_1,\ldots,X_s]$, $s\geq 1$, on note ${\rm ev}_\alpha(\mathfrak I)$ l'id\'eal de $\Q[X_1,\ldots,X_s]$ obtenu 
en \'evaluant les \'el\'ement de $\mathfrak I\cap \Q[z,X_1,\ldots,X_s]$ en $z=\alpha$. 

Nous traitons d'abord le cas des $E$-fonctions. 

\begin{itemize}

\item[{\rm 1.}] La première étape consiste à déterminer pour chaque $f_i$ un entier $m_i$ tel que $f_i$ est annulée par un $E$-opérateur d'ordre au plus $m_i$. 
On suit l'approche d'André \cite{An1}. Si 
$f(z)=\sum_{n=0}^\infty \frac{a_n}{n!}z^n$, on dit que $g(z):=\sum_{n=0}^\infty a_nz^n$ est la $G$-fonction associée à $f$. \`A partir de l'opérateur $L_i$, on peut déterminer un opérateur différentiel 
$L'_i$ annulant la $G$-fonction associée à $f_i$ en utilisant la transformée de Fourier-Laplace. \`A partir de ce premier opérateur on peut obtenir un opérateur différentiel d'ordre minimal annulant cette $G$-fonction (voir \cite{BRS}). C'est un $G$-opérateur.  \`A partir de ce $G$-opérateur, on détermine facilement un $E$-opérateur explicite annulant $f_i$ (cf. \cite[Sections 2 et 5]{An1}). On note $m_i$ l'ordre de cet opérateur et on pose  ${\bf m}:=(m_1-1,\ldots,m_r-1)$. 

\item[{\rm 2.}]  La démonstration du théorème \ref{thm: orbital} montre alors que 
$$
\mathfrak I(f_1(\alpha),\ldots,f_r(\alpha) ) ={\rm ev}_\alpha(\mathfrak I^{\delta}_{\bf m}(f_1,\ldots,f_r))\cap \Q[X_1,\ldots, X_r]\,, 
$$
où $\mathfrak I^{\delta}_{\bf m}(f_1,\ldots,f_r):=\mathfrak I^{\delta}(f_1,\ldots,f_r)\cap \Q(z)[(X_{i,j})_{1\leq i \leq r,0\leq j <m_i}]$ et où l'on a identifié 
les variables $X_{i,0}$ et $X_i$, $1\leq i\leq r$.

\item[{\rm 3.}] Supposons que l'on dispose d'un ensemble explicite de générateurs  de $\mathfrak I^{\delta}_{\bf m}(f_1,\ldots,f_r)$. En les évaluant  en $z=\alpha$, on obtient 
un ensemble explicite de générateurs de l'idéal 
${\rm ev}_\alpha(\mathfrak I^{\delta}_{\bf m}(f_1,\ldots,f_r))$. La détermination explicite d'une base de l'idéal 
$${\rm ev}_\alpha(\mathfrak I^{\delta}_{\bf m}(f_1,\ldots,f_r))\cap \Q[X_1,\ldots, X_r]$$ 
s'obtient alors classiquement en utilisant la théorie des bases de Gröbner (voir \cite[Section 6.2]{BW} pour l'algorithme correspondant). 

\end{itemize} 

Il ressort de cette analyse que la connaissance d'une base de $\mathfrak I^{\delta}_{\bf m}(f_1,\ldots,f_r)$ suffit à déterminer efficacement 
une base de  $\mathfrak I(f_1(\alpha),\ldots,f_r(\alpha) )$. Toute la difficulté réside donc dans l'obtention de cette première. 
Les fonctions $\delta^j(f_i)$, $1\leq i\leq r$, $1\leq j< m_i$, étant liées par un système différentiel linéaire, l'algorithme de 
Hrushovski-Feng \cite{Fe15} permet en principe d'obtenir une base de $\mathfrak I^{\delta}_{\bf m}(f_1,\ldots,f_r)$. 
Cependant, cet algorithme semble difficile à mettre en \oe uvre  du point de vue pratique ; c'est là le talon d'Achille de cette méthode.

Dans le cas des $M_q$-fonctions, on peut suivre rigoureusement la même approche, en substituant l'algorithme de Feng \cite{Fe18} à celui de Hrushovski-Feng.  
Notons que cet algorithme est tout aussi peu commode à 
utiliser dans la pratique.  La seule différence réside dans la première étape, à savoir la détermination explicite d'un vecteur ${\bf m}:=(m_1-1,\ldots,m_r-1)$ tel que 
$$
\mathfrak I(f_1(\alpha),\ldots,f_r(\alpha) ) ={\rm ev}_\alpha(\mathfrak I^{\sigma_q}_{\bf m}(f_1,\ldots,f_r))\cap \Q[X_1,\ldots, X_r]\,,
$$
où $\mathfrak I^{\sigma_q}_{\bf m}(f_1,\ldots,f_r):=\mathfrak I^{\sigma_q}(f_1,\ldots,f_r)\cap \Q(z)[X_{i,j})_{1\leq i \leq r,0\leq j <m_i}]$. 
Une analyse de la preuve du théorème \ref{thm: orbital} montre que s'il existe un entier $\ell\geq 1$ tel que pour chaque $i$, $1\leq i\leq r$, 
on dispose d'un système $q^\ell$-malhérien linéaire régulier en  
$\alpha$ et possédant un vecteur solution formé de la fonction $f_i$ et de fonctions de la forme $\sigma_{q^\ell}^j(f_i)$ pour $j\leq c_i$, alors on peut choisir $m_i:=\ell c_i+1$. 

Voici comment construire explicitement de tels systèmes. Partant d'un $\sigma_q$-opérateur annulant $f_i$, on détermine un $\sigma_q$-opérateur d'ordre minimal annulant $f_i$ (cf. \cite{AF18}). Notons $n_i$ son ordre. D'après  le théorème 1.10 de \cite{AF17}, on peut  itérer le système compagnon associé, puis  utiliser  
la technique de dédoublement  décrite dans la preuve du lemme 5.2 de \cite{AF17}, pour obtenir un système $q^\ell$-mahlérien pour un certain $\ell$ explicite tel que $\alpha$ est régulier pour ce système. En outre, il suffit de choisir  $\ell$ suffisamment grand par rapport à $q$, au module de $\alpha$, et au maximum des degrés des polynômes de l'équation minimale de $f_i$\footnote{Plus précisément, il suffit de choisir $\ell$ de sorte que $\alpha^{q^\ell}$ soit régulier pour l'équation minimale de $f_i$. Si $R<1$ est fixé, on choisit $\ell$ tel que $a(z^{q^\ell})$ n'a pas de racine dans $D^*(0,R)$, pour tout polynôme $a(z)$ qui est coefficient de l'équation minimale de $f_i$. Ce choix de $\ell$ conviendra alors pour tout $\alpha\in D^*(0,R)$.}. 
On peut ainsi trouver explicitement  un entier 
$\ell$ qui convient pour toutes les fonctions $f_i$. Il suffit alors de prendre $m_i:= 2\ell+n_i$. 

\begin{rem}\label{rem: comp}
Dans le cas (E), nous avons vu que le vecteur ${\bf m}$ peut être déterminé indépendamment de $\alpha$. Dans le cas (${\rm M}_q$), on peut déterminer ${\bf m}$ qui convient pour tout $\alpha\in D^*(0,R)$ lorsque $R<1$ est préalablement fixé, mais il n'existe pas nécessairement un vecteur ${\bf m}$ qui convient pour tous les points du disque unité (cf. exemple \ref{ex: tm3}). 
\end{rem}

\section{Sporadicit\'e des relations non banales}\label{sec: spor}

La proposition suivante \'etablit que dans le cadre des $E$- et des $M_q$-fonctions les relations non banales sont nécessairement sporadiques. 

\begin{prop}\label{prop: sporadique} On a les deux r\'esultats suivants. 
	
	\begin{itemize}
\item[{\rm (E)}] Soient $f_1(z),\ldots,f_r(z)$ des $E$-fonctions. Il existe un ensemble fini $\mathcal S\subset \Q$ tel que l'existence d'une relation non banale entre 
$f_1(\alpha),\ldots,f_r(\alpha)$ implique que  $\alpha\in\mathcal S$. 

\item[{\rm(${\rm M}_q$)}] Soient $f_1(z),\ldots,f_r(z)$ des $M_q$-fonctions et $R$ un nombre réel, $0<R<1$.  
Il existe un ensemble fini $\mathcal S_R\subset \Q$ tel que l'existence d'une relation non banale entre 
$f_1(\alpha),\ldots,f_r(\alpha)$ avec $\alpha\in D^*(0,R)$ implique que $\alpha\in\mathcal S_R$. 
 \end{itemize}
 \end{prop}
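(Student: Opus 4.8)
The plan is to compare, for each admissible $\alpha$, the ideal of \emph{all} relations $\mathfrak{I}(f_1(\alpha),\ldots,f_r(\alpha))=\{P:P(f_1(\alpha),\ldots,f_r(\alpha))=0\}$ with the ideal of banal relations, and to show that the two coincide for all but finitely many $\alpha$. Since a banal relation is in particular a relation, the banal ideal is always contained in the full one, so a non-banal relation exists at $\alpha$ exactly when this inclusion is strict. The effective reformulation established in Section~\ref{sec: eff} expresses the full ideal as an elimination performed \emph{after} specialisation,
\[
\mathfrak{I}(f_1(\alpha),\ldots,f_r(\alpha))=\ev_\alpha\!\big(\mathfrak{I}^{\delta}_{{\bf m}}(f_1,\ldots,f_r)\big)\cap\Q[X_1,\ldots,X_r]
\]
(with $\sigma_q$ in place of $\delta$ in case $({\rm M}_q)$), whereas the banal ideal is the specialisation $\ev_\alpha(I_0)$ of the functional ideal $I_0:=\mathfrak{I}(f_1,\ldots,f_r)\cap\Q[z,X_1,\ldots,X_r]$, i.e. an elimination performed \emph{before} specialisation. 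Indeed, identifying $X_{i,0}$ with $X_i$, a polynomial involving only the variables $X_{i,0}$ lies in $\mathfrak{I}^{\delta}_{{\bf m}}(f_1,\ldots,f_r)$ if and only if it lies in $\mathfrak{I}(f_1,\ldots,f_r)$, so $I_0$ is precisely the elimination ideal of $\mathfrak{I}^{\delta}_{{\bf m}}$ with respect to the variables $X_{i,j}$, $j\geq 1$.

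Everything thus reduces to the statement that elimination commutes with specialisation outside a finite set: writing $J$ for the contraction of $\mathfrak{I}^{\delta}_{{\bf m}}(f_1,\ldots,f_r)$ to $\Q[z,(X_{i,j})]$, so that $\ev_\alpha(\mathfrak{I}^{\delta}_{{\bf m}})=\ev_\alpha(J)$ and $J\cap\Q[z,X_1,\ldots,X_r]=I_0$, I want
\[
\ev_\alpha(J)\cap\Q[X_1,\ldots,X_r]=\ev_\alpha\!\big(J\cap\Q[z,X_1,\ldots,X_r]\big)
\]
for all but finitely many $\alpha$. The inclusion $\supseteq$ holds for every $\alpha$. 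For the reverse I would fix a Gr\"obner basis $G$ of $J$ for an elimination order placing the variables $X_{i,j}$ with $j\geq 1$ above the $X_{i,0}$, clear denominators, and invoke the standard specialisation principle already used in Section~\ref{sec: eff}: for every $\alpha$ avoiding the finitely many zeros of the leading coefficients of the elements of $G$, the reduction $G|_{z=\alpha}$ is again a Gr\"obner basis, whence its elements free of the eliminated variables generate both sides. I denote by $\mathcal{S}$ the resulting finite exceptional set.

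For $\alpha\notin\mathcal{S}$ (and $\alpha\in\Q^*$ in case (E)) the two ideals of the previous display coincide, so the full ideal of relations equals the banal ideal $\ev_\alpha(I_0)$ and no non-banal relation can occur; this proves (E), the set $\mathcal{S}$ being independent of $\alpha$ because, as recalled after Theorem~\ref{thm: orbital}, the depth vector ${\bf m}$ can be chosen once and for all in the $E$-setting. Case $({\rm M}_q)$ is identical after replacing $\delta$ by $\sigma_q$ and $I_0$ by the contraction of $\mathfrak{I}^{\sigma_q}_{{\bf m}}$, with one proviso: the admissible depth ${\bf m}$ now depends on $R$ (cf. the same remark and Example~\ref{ex: tm3}). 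One therefore first fixes $R$, chooses ${\bf m}={\bf m}_R$ valid for all $\alpha\in D^*(0,R)$, and then runs the Gr\"obner argument to produce a finite set $\mathcal{S}_R$; this dependence on $R$ is exactly why the statement of $({\rm M}_q)$ is formulated relative to a disc.

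The main obstacle is the faithful commutation of elimination with specialisation at the level of \emph{ideals}, not merely of their zero sets, since it is this ideal-theoretic statement that forces the exceptional locus to be finite; the relation of $(z-1)e^z$ at $z=1$, and those of Example~\ref{ex: tm3}, show that such a locus is genuinely nonempty in general, so one cannot hope to do better than finiteness. The only additional delicate point, specific to the Mahler case, is the uniform control of the depth ${\bf m}$ over the whole punctured disc $D^*(0,R)$, which is precisely the bounded-depth statement quoted after Theorem~\ref{thm: orbital} and is what prevents the argument from being uniform in $R$.
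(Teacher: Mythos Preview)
Your proposal is correct and follows essentially the same route as the paper: both reduce the question to showing that, for the ideal $\mathfrak I^{\delta}_{\mathbf m}$ (resp.\ $\mathfrak I^{\sigma_q}_{\mathbf m}$), elimination of the variables $X_{i,j}$ with $j\geq 1$ commutes with specialisation at $z=\alpha$ outside the finite set of zeros of the leading coefficients of a Gr\"obner basis for an elimination order. The only difference is cosmetic: the paper isolates this commutation as a separate lemma (Lemme~\ref{lem:elimination_ideal}) and proves it by a direct minimal-leading-monomial reduction, whereas you invoke it as a ``standard specialisation principle''; note however that Section~\ref{sec: eff} only uses Gr\"obner elimination after specialisation and does not itself establish this principle, so your cross-reference is slightly off.
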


\begin{rem}
En général, les relations non banales entres des fonctions analytiques n'ont aucune raison d'être sporadiques, comme le montre l'exemple suivant. 
Consid\'erons la fonction d\'efinie par 
$$
f(z):=\sum_{n=1}^{\infty} c_k\left(\prod_{i=1}^kP_i(z)\right) \,,
$$
o\`u $P_1,P_2,\ldots$ est une \'enum\'eration des polyn\^omes \`a coefficients entiers et $(c_k)_{k\geq 1}$ une suite d\'ecroissante de nombres rationnels non nuls tendant vers $0$. 
Si la d\'ecroissance de $c_k$ est suffisamment rapide, alors $f$ est une fonction enti\`ere, ce que nous supposerons désormais. Ainsi, comme $f$ n'est pas un polyn\^ome, elle est donc transcendante et les   fonctions $1$ et $f$ sont lin\'eairement ind\'ependantes sur $\Q(z)$.  
Par construction, pour tout $\alpha\in\Q^*$, on a $f(\alpha)\in\Q$, de sorte que la relation lin\'eaire sur $\Q$ donn\'ee par $f(\alpha)\times 1 - 1\times f(\alpha)=0$ est non banale relativement \`a $\{1,f(z)\}$. 
\end{rem}

D\'emontrons d'abord le lemme suivant. Nous conservons les notations introduites dans la section \ref{sec: eff}.

\begin{lem}\label{lem:elimination_ideal}
	Soit $\mathfrak I$ un id\'eal de $ \Q(z)[X_1,\ldots,X_m]$ et posons $\mathfrak I_r :=\mathfrak I\cap \Q(z)[X_1,\ldots,X_r]$. Alors, il existe un ensemble fini $\mathcal S\subset \Q$ tel que, pour tout $\alpha$ dans 
	$\Q\setminus \mathcal S$, on a 
	$$
	{\rm ev}_\alpha(\mathfrak I) \cap \Q[X_1,\ldots,X_r] = {\rm ev}_\alpha(\mathfrak I_r)\,.
	$$
\end{lem}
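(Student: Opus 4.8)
Le plan consiste \`a passer par une base de Gr\"obner de $\mathfrak I$ pour un ordre d'\'elimination, puis \`a exploiter la stabilit\'e des bases de Gr\"obner par sp\'ecialisation g\'en\'erique du param\`etre $z$. L'inclusion $\supseteq$ est imm\'ediate et vaut pour tout $\alpha$ en lequel les \'evaluations ont un sens : comme $\mathfrak I_r\subseteq\mathfrak I$, tout $\tilde P\in\mathfrak I_r\cap\Q[z,X_1,\ldots,X_r]$ appartient \`a $\mathfrak I\cap\Q[z,X_1,\ldots,X_m]$ et v\'erifie $\tilde P(\alpha,\cdot)\in\Q[X_1,\ldots,X_r]$, d'o\`u ${\rm ev}_\alpha(\mathfrak I_r)\subseteq {\rm ev}_\alpha(\mathfrak I)\cap\Q[X_1,\ldots,X_r]$. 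Toute la substance r\'eside donc dans l'inclusion r\'eciproque, valable hors d'un ensemble fini $\mathcal S$.

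Je choisirais un ordre monomial $\prec$ sur les mon\^omes en $X_1,\ldots,X_m$ qui \'elimine $X_{r+1},\ldots,X_m$, au sens o\`u tout mon\^ome faisant intervenir une variable $X_j$ avec $j>r$ est sup\'erieur \`a tout mon\^ome en les seules variables $X_1,\ldots,X_r$. Soit $G=\{g_1,\ldots,g_N\}$ une base de Gr\"obner de $\mathfrak I$ sur le corps $\Q(z)$, \`a \'el\'ements unitaires (coefficient dominant \'egal \`a $1$). Le th\'eor\`eme d'\'elimination assure que $G_r:=G\cap\Q(z)[X_1,\ldots,X_r]$ est une base de Gr\"obner de $\mathfrak I_r$. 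Comme $\prec$ est un ordre d'\'elimination, un \'el\'ement $g\in G$ appartient \`a $\Q(z)[X_1,\ldots,X_r]$ si, et seulement si, son mon\^ome dominant ${\rm LM}(g)$ est en les seules variables $X_1,\ldots,X_r$, car alors tous les mon\^omes de $g$, \'etant $\preceq {\rm LM}(g)$, le sont aussi.

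J'\'ecrirais ensuite les coefficients des $g_i$ comme fractions rationnelles en $z$ et poserais $\mathcal S$ \'egal \`a l'ensemble fini des $\alpha\in\Q$ qui annulent l'un des d\'enominateurs apparaissant dans $G$. Pour $\alpha\notin\mathcal S$, chaque $g_i$ se sp\'ecialise en $g_i(\alpha,\cdot)$ dont le coefficient dominant reste $1$, de sorte que ${\rm LM}(g_i(\alpha,\cdot))={\rm LM}(g_i)$ : les mon\^omes dominants sont pr\'eserv\'es. Le c\oe ur de la preuve est alors d'\'etablir que $G(\alpha):=\{g_i(\alpha,\cdot)\}$ est une base de Gr\"obner de ${\rm ev}_\alpha(\mathfrak I)$ pour $\prec$. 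D'une part, $G(\alpha)$ engendre bien ${\rm ev}_\alpha(\mathfrak I)$ : tout $\tilde P\in\mathfrak I\cap\Q[z,X]$ s'\'ecrit $\tilde P=\sum_i h_i g_i$ via l'algorithme de division par la base unitaire $G$, sans aucune inversion de coefficient dominant, les $h_i\in\Q(z)[X]$ ayant des d\'enominateurs qui divisent un produit de ceux des $g_i$ ; la sp\'ecialisation en $\alpha\notin\mathcal S$ fournit $\tilde P(\alpha,\cdot)=\sum_i h_i(\alpha,\cdot)\,g_i(\alpha,\cdot)$. D'autre part, le crit\`ere de Buchberger se transf\`ere \`a $G(\alpha)$ pr\'ecis\'ement parce que les mon\^omes dominants, les $S$-polyn\^omes et leurs r\'eductions \`a z\'ero commutent \`a la sp\'ecialisation d\`es que les coefficients dominants (ici \'egaux \`a $1$) ne s'annulent pas. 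Cette stabilit\'e des bases de Gr\"obner par sp\'ecialisation g\'en\'erique, d\'ej\`a consid\'er\'ee par Gianni et Weispfenning, est le point technique principal.

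Pour conclure, j'appliquerais le m\^eme argument \`a $\mathfrak I_r$ et \`a $G_r$ : pour $\alpha\notin\mathcal S$, $G_r(\alpha)$ est une base de Gr\"obner engendrant ${\rm ev}_\alpha(\mathfrak I_r)$. La pr\'eservation des mon\^omes dominants donne $g\in\Q(z)[X_1,\ldots,X_r]$ si, et seulement si, $g(\alpha,\cdot)\in\Q[X_1,\ldots,X_r]$, d'o\`u $G(\alpha)\cap\Q[X_1,\ldots,X_r]=G_r(\alpha)$. Le th\'eor\`eme d'\'elimination appliqu\'e \`a la base de Gr\"obner $G(\alpha)$ de ${\rm ev}_\alpha(\mathfrak I)$ donne enfin
$$
{\rm ev}_\alpha(\mathfrak I)\cap\Q[X_1,\ldots,X_r]=\langle G(\alpha)\cap\Q[X_1,\ldots,X_r]\rangle=\langle G_r(\alpha)\rangle={\rm ev}_\alpha(\mathfrak I_r)\,,
$$
ce qui \'etablit l'inclusion manquante et le lemme.
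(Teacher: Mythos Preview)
Your proposal is correct and follows the same overall strategy as the paper: both rely on a Gr\"obner basis of $\mathfrak I$ for an elimination order and take $\mathcal S$ to be the finite set of $\alpha$ at which the leading terms of the basis elements degenerate.

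The execution differs slightly. You establish the stronger intermediate fact that $G(\alpha)$ is a Gr\"obner basis of ${\rm ev}_\alpha(\mathfrak I)$ (via stability of Buchberger's criterion under generic specialization), then invoke the elimination theorem once more over~$\Q$. The paper proceeds more directly: assuming a counterexample $P\in\mathfrak I\cap\Q[z,X_1,\dots,X_m]$ with ${\rm ev}_\alpha(P)\in\Q[X_1,\dots,X_r]\setminus{\rm ev}_\alpha(\mathfrak I_r)$ and ${\rm lm}(P)$ minimal, it observes that ${\rm lm}(P)$ must involve some $X_j$ with $j>r$, hence ${\rm lc}(P)(\alpha)=0$; a single reduction step against the $P_i$ with ${\rm lm}(P_i)\mid{\rm lm}(P)$ then produces a polynomial with strictly smaller leading monomial and the same evaluation, contradicting minimality. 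This avoids the appeal to Buchberger and the analysis of denominators in the division algorithm, at the cost of being slightly less conceptual. Both arguments are standard and essentially equivalent in difficulty.
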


\begin{proof}
  L'argument repose sur l'utilisation de bases de Gröbner. Nous renvoyons  à \cite{BW} pour les résultats classiques concernant cette notion. 
	
	On considère une base de Gröbner $\mathcal G=\{P_1,\ldots,P_s\}$ relativement \`a un ordre monomial $\succ$ qui élimine $X_{r+1},\ldots,X_m$. Pour un tel ordre, tout monôme contenant une des variables $X_{r+1},\ldots,X_m$ est plus grand qu'un monôme composé uniquement des variables  $X_{1},\ldots,X_r$. De plus, l'ordre est compatible avec la multiplication. Pour un polynôme $P$, on note ${\rm lm}(P)$ le monôme le plus grand pour la relation d'ordre totale $\succ$ et ${\rm lc}(P)\in \Q(z)$ le coefficient correspondant. On a 
	$$
	P={\rm lc}(P){\rm lm}(P) +  P'\,
	$$
	où $P'$ est un polynôme dont les monômes sont tous plus petits que ${\rm lm}(P)$. 
	Soit $\mathcal S\subset \Q$ l'ensemble des  racines des polynômes ${\rm lc}(P_i)$, $1\leq i\leq s$. On va montrer que si $\alpha\in\Q\setminus \mathcal S$, alors 
	$$
	{\rm ev}_\alpha(\mathfrak I) \cap \Q[X_1,\ldots,X_r] = {\rm ev}_\alpha(\mathfrak I_r)\,.
	$$ 
	Supposons par l'absurde que ce ne soit pas le cas. Considérons $\alpha\in\Q\setminus \mathcal S$ tel qu'il 
existe  $P \in \mathfrak I\cap \Q[z,X_1,\ldots,X_m]$ pour lequel  
	$$
	{\rm ev}_{\alpha}(P) \in \Q[X_1,\ldots,X_r] \setminus {\rm ev}_\alpha(\mathfrak I_r) \,.
	$$
	On choisit $P$ de sorte que ${\rm lm}(P)$ soit minimal parmi les polynômes ayant cette propriété.  
	Comme $\mathcal G$ est une base de Gröbner, il existe un $P_i$ dont le monôme dominant divise celui de $P$. De plus, comme $\alpha\not\in\mathcal S$, on a 
	$ {\rm ev}_\alpha({\rm lc}(P_i))\not=0$. On pose
	$$
Q := \left({\rm lc}(P_i)P - \frac{{\rm lc}(P){\rm lm}(P)}{ {\rm lm}(P_i)}P_i\right)/{\rm ev}_\alpha({\rm lc}(P_i)) \in \Q[z,X_1,\ldots,X_m]\,.
	$$
	Comme $P \notin \mathfrak I_r$,  ${\rm lm}(P)$ contient au moins une des variables $X_{r+1},\ldots,X_m$. Comme ${\rm ev}_{\alpha}(P) \in \Q[X_1,\ldots,X_r]$, le coefficient dominant de $P$ doit s'annuler en $\alpha$. On obtient donc que ${\rm ev}_{\alpha}(Q)={\rm ev}_{\alpha}(P)$. Cependant, le monôme dominant de $Q$ est strictement plus petit que celui de $P$, ce qui contredit la définition de $P$. 
\end{proof}

\begin{proof}[Démonstration de la Proposition \ref{prop: sporadique}]
	Commençons par le cas (E). Soit ${\bf m}\in\mathbb N^r$ tel que pour tout $\alpha\in\Q^*$, on a  
	$$\mathfrak I(f_1(\alpha),\ldots,f_r(\alpha))= {\rm ev}_\alpha(\mathfrak I_{\bf m}^\delta(f_1,\ldots,f_r))\cap \Q[X_1,\ldots,X_r]\,.$$ 
	Nous avons vu qu'un tel ${\bf m}$ existe (cf. Section \ref{sec: eff}).  
	Posons $\mathfrak I:= \mathfrak I_{\bf m}^\delta(f_1,\ldots,f_r)$ et $\mathfrak I_r:=\mathfrak I\cap \Q[X_1,\ldots,X_r]$. 
	S'il existe une relation  non banale entre  les nombres $f_1(\alpha),\ldots,f_r(\alpha)$, alors il existe un polynôme $Q$ tel que
	$${\rm ev}_\alpha(Q)  \in {\rm ev}_\alpha(\mathfrak I) \cap \Q[X_1,\ldots,X_r]\setminus {\rm ev}_\alpha(\mathfrak I_r)$$ et donc 
	 $${\rm ev}_\alpha(\mathfrak I) \cap \Q[X_1,\ldots,X_r] \not= {\rm ev}_\alpha(\mathfrak I_r)\,.$$ D'après le lemme \ref{lem:elimination_ideal}, il n'existe qu'un nombre fini de tels $\alpha$.

	Le cas (${\rm M}_q$) est similaire. On choisit ${\bf m}\in\mathbb N^r$ tel que 
	$$\mathfrak I(f_1(\alpha),\ldots,f_r(\alpha))= {\rm ev}_\alpha(\mathfrak I_{\bf m}^{\sigma_q}(f_1,\ldots,f_r))\cap \Q[X_1,\ldots,X_r]\,.$$ 
	La différence est que l'existence d'un tel ${\bf m}$ est seulement assurée pour tout $\alpha\in \Q^*\cap D(0,R)$ lorsque $R<1$ est fixé, mais qu'il n'est pas toujours possible de choisir ${\bf m}$ indépendamment de $R$ (cf. remarque \ref{rem: comp}).  
	\end{proof}

\section{Structure des idéaux des relations $\delta$- et $\sigma_q$-alg\'ebriques}\label{sec: ideal}

Dans cette section, nous décrivons la structure des idéaux $\mathfrak I^\delta(f_1,\ldots,f_r)$ et $\mathfrak I^{\sigma_q}(f_1,\ldots,f_r)$ 
qui interviennent dans le théorème \ref{thm: orbital}.

\subsection{Le cas des $E$-fonctions} Soient $f_1,\ldots,f_r$ des $E$-fonctions. Pour tout $i$, $1\leq i\leq r$, notons 
$$
a_{i,0}(z)+\cdots + a_{i,m_{i-1}}(z)\delta^{m_i-1}+\delta^{m_i}
$$
l'opérateur différentiel d'ordre minimal annulant $f_i$, où $a_{i,j}(z)\in\Q(z)$ pour tout $j$, $1\leq j\leq m_i-1$. 
En posant 
$$
	L_i:=a_{i,0}(z)X_{i,0}+\cdots + a_{i,m_{i-1}}(z)X_{i,m_{i-1}}+X_{i,m_i}\,,
	$$
	on a donc $L_i\in  \mathfrak I^\delta(f_1,\ldots,f_r)$. 
	Posons ${\bf m}:=(m_1-1,\ldots,m_r-1)$ et 
	$$
	\mathfrak I_{\bf m}^{\delta}(f_1,\dots,f_r) := \mathfrak I^\delta(f_1,\ldots,f_r)\cap \Q(z)[(X_{i,j})_{1\leq i \leq r, 0\leq j< m_i }] \,. 
	$$

Rappelons que si $R$ est un anneau muni d'une dérivation $\partial$, un $\partial$-idéal $\mathcal I$ de $R$ est un idéal de $R$ tel que $\partial(\mathcal I)\subset \mathcal I$. 
Le $\partial$-idéal engendré par un sous-ensemble $\mathcal S$ de $R$ est  le plus petit $\partial$-idéal de $R$ contenant $\mathcal S$. 
L'anneau $\Q(z)[(X_{i,j})_{1\leq i \leq r,j\in \N}]$ 
peut être muni de la dérivation $\delta$ qui agit classiquement sur $\Q(z)$ et telle que $\delta(X_{i,j})=X_{i,j+1}$. Muni de cette dérivation, on vérifie que $\mathfrak I^\delta(f_1,\ldots,f_r)$ 
est un $\delta$-idéal. L'anneau de polynômes $\Q(z)[(X_{i,j})_{1\leq i \leq r,j\in \N}]$ n'est pas noethérien et l'idéal $\mathfrak I^\delta(f_1,\ldots,f_r)$ n'est pas toujours finiment engendré ; il l'est toutefois en tant que $\delta$-idéal.  
Le résultat suivant précise un ensemble de générateurs.

\begin{prop}\label{prop: deltaideal}
En tant que $\delta$-idéal, $\mathfrak I^\delta(f_1,\ldots,f_r)$ est engendré par les éléments de $\mathfrak I^{\delta}_{\bf m}(f_1,\ldots,f_r)$ et les formes linéaires $L_1,\ldots,L_r$.
\end{prop}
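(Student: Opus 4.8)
Le plan est d'\'etablir la double inclusion. Notons $J$ le $\delta$-id\'eal engendr\'e par les \'el\'ements de $\mathfrak I^\delta_{\bf m}(f_1,\ldots,f_r)$ et par les formes lin\'eaires $L_1,\ldots,L_r$. L'inclusion $J\subset \mathfrak I^\delta(f_1,\ldots,f_r)$ est imm\'ediate, car tous les g\'en\'erateurs de $J$ appartiennent au $\delta$-id\'eal $\mathfrak I^\delta(f_1,\ldots,f_r)$. Toute la substance de l'\'enonc\'e r\'eside donc dans l'inclusion r\'eciproque $\mathfrak I^\delta(f_1,\ldots,f_r)\subset J$.

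Le point cl\'e est que chaque forme $L_i$ est unitaire en sa variable de plus haut indice $X_{i,m_i}$ : modulo $J$, on peut donc exprimer $X_{i,m_i}$ comme combinaison $\Q(z)$-lin\'eaire des variables $X_{i,0},\ldots,X_{i,m_i-1}$. Comme $J$ est un $\delta$-id\'eal, il contient $\delta^k(L_i)$ pour tout $k\geq 0$. Or un calcul direct montre que $\delta(L_i)$ fait appara\^itre $X_{i,m_i+1}$ avec le coefficient $1$, les autres variables \'etant d'indice au plus $m_i$ ; plus g\'en\'eralement, $\delta^k(L_i)$ fait appara\^itre $X_{i,m_i+k}$ avec coefficient $1$ et uniquement des variables d'indice strictement inf\'erieur. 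Je montrerais alors par r\'ecurrence sur $j\geq m_i$ que toute variable $X_{i,j}$ se r\'eduit modulo $J$ \`a une combinaison $\Q(z)$-lin\'eaire des seules variables de bas indice $X_{i,0},\ldots,X_{i,m_i-1}$.

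Je d\'efinirais ensuite le morphisme de $\Q(z)$-alg\`ebres
\[\pi : \Q(z)[(X_{i,j})_{1\leq i\leq r,\,j\in\N}]\longrightarrow \Q(z)[(X_{i,j})_{1\leq i\leq r,\,0\leq j<m_i}]\]
qui fixe les variables de bas indice et envoie chaque $X_{i,j}$ avec $j\geq m_i$ sur sa forme r\'eduite. Comme $X_{i,j}-\pi(X_{i,j})\in J$ pour chaque variable, on a $P-\pi(P)\in J$ pour tout polyn\^ome $P$. Soit maintenant $P\in \mathfrak I^\delta(f_1,\ldots,f_r)$ : puisque $P-\pi(P)\in J\subset \mathfrak I^\delta(f_1,\ldots,f_r)$, on obtient $\pi(P)\in \mathfrak I^\delta(f_1,\ldots,f_r)$ ; mais $\pi(P)$ ne fait intervenir que les variables de bas indice, donc $\pi(P)\in \mathfrak I^\delta_{\bf m}(f_1,\ldots,f_r)\subset J$. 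Par cons\'equent $P=\pi(P)+(P-\pi(P))\in J$, ce qui ach\`eve la d\'emonstration.

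La principale difficult\'e est l'\'etape de r\'eduction : il faut v\'erifier soigneusement que le caract\`ere unitaire de $L_i$ se propage \`a tous les $\delta^k(L_i)$, de fa\c con \`a pouvoir r\'esoudre successivement en les variables de plus haut indice. Une fois ce point acquis, la construction de $\pi$ et l'argument final sont purement formels. On notera que le caract\`ere $\delta$-id\'eal de $J$ n'intervient que pour garantir l'appartenance des $\delta^k(L_i)$ \`a $J$, le morphisme $\pi$ n'ayant nul besoin de commuter \`a $\delta$. Enfin, comme $\mathfrak I^\delta_{\bf m}(f_1,\ldots,f_r)$ vit dans un anneau de polyn\^omes en un nombre fini de variables sur le corps $\Q(z)$, donc noeth\'erien, ceci redonne au passage que $\mathfrak I^\delta(f_1,\ldots,f_r)$ est finiment engendr\'e en tant que $\delta$-id\'eal.
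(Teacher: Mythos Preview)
Your proof is correct and follows essentially the same approach as the paper's: both exploit the fact that $\delta^k(L_i)$ is monic in $X_{i,m_i+k}$ to reduce any element of $\mathfrak I^\delta(f_1,\ldots,f_r)$ modulo $J$ to one lying in $\mathfrak I^\delta_{\bf m}(f_1,\ldots,f_r)$. The paper carries this out via a nested induction (on the multi-depth of $Q$, then on the degree of $Q$ in the highest-index variable, subtracting a suitable multiple of $\delta^k(L_i)$ at each step), whereas you package the same reduction as a single $\Q(z)$-algebra morphism $\pi$; your formulation is arguably tidier, but the mathematical content is identical.
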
 

\begin{proof}
 Notons $\mathfrak J$ le $\delta$-idéal engendré par les éléments de l'idéal $\mathfrak I^{\delta}_{\bf m}(f_1,\ldots,f_r)$ et les formes linéaires $L_1,\ldots,L_r$. Il est clair que 
 $$\mathfrak J \subset \mathfrak I^{\delta}(f_1,\ldots,f_r)\,.$$ 
 Montrons l'inclusion réciproque. 
 Soit $Q \in \mathfrak I^\delta(f_1,\ldots,f_r)$. Considérons l'ordre partiel  sur $\N^r$ défini par ${\bf m}\leq {\bf n}$ si $m_i\leq n_i$ pour tout $i$, $1\leq i\leq r$. 
La \emph{multi-profondeur} de $Q$ est définie comme le $r$-uplet minimal $(n_1,\ldots,n_r)$ tel que le support de $Q$ est inclus dans  $\{(i,j) : 1\leq i \leq r,0\leq j\leq n_i\}$. 

Nous allons raisonner par récurrence multiple sur  $(n_1,\ldots,n_r)$. Si $n_i < m_i$ pour tout $i$, alors $Q \in \mathfrak I^{\delta}_{\bf m}(f_1,\ldots,f_r)$ et il n'y a rien à prouver. 
Supposons à présent que tout polynôme de $\mathfrak I^\delta(f_1,\ldots,f_r)$ de multi-profondeur au plus $(n_1,n_2,\ldots,n_r)$ appartient \`a $\mathfrak J$ et montrons qu'il en est de 
même pour tout 
polynôme de $\mathfrak I^\delta(f_1,\ldots,f_r)$ de multi-profondeur au plus $(n_1+1,n_2,\ldots,n_r)$, où $n_1+1\geq m_1$. L'argument pour les polynômes de $\mathfrak I^\delta(f_1,\ldots,f_r)$ 
de multi-profondeur au plus 
$(n_1,\ldots,n_j+1,\ldots,n_r)$ est identique.

On note $\X^{\circ}$ l'ensemble des variables $X_{i,j}$, $1\leq i \leq r$, $1 \leq j \leq n_i$. 
Soit $Q$  un élément de  $\mathfrak I^\delta(f_1,\ldots,f_r)$ de multi-profondeur au plus $(n_1+1,n_2,\ldots,n_r)$. On peut écrire 
 $$
 Q =: \sum_{\nu=0}^s M_\nu(z,\X^\circ)X_{1,n_{1}+1}^\nu\,,
 $$
 où les $M_\nu(z,\X^\circ)$ appartiennent à $\Q(z)[\X^{\circ}]$ et $s\geq 0$.  
 On raisonne par récurrence sur l'entier $s$ pour montrer que $Q\in\mathfrak J$. 
 Si $s=0$, alors $Q$ est de multi-profondeur au plus $(n_1,n_2,\ldots,n_r)$ est la première hypothèse de récurrence implique donc que $Q\in \mathfrak J$. 
 Supposons à présent que $s\geq 1$ et que le résultat est connu pour tout élément de $\mathfrak I^\delta(f_1,\ldots,f_r)$ de multi-profondeur au plus $(n_1+1,n_2,\ldots,n_r)$ et de 
 degré au plus $s-1$ en  $X_{1,n_{1}+1}$. 
Posons
 $$
 P :=  M_s(z,\X^\circ)X_{1,n_{1}+1}^{s-1}\delta^{n_{1}+1-m_{1}}(L_{1}) \,.
 $$
Comme $L_1 \in \mathfrak J$ et que $\mathfrak J$ est un $\delta$-idéal, on a $P \in \mathfrak J\subset \mathfrak I^\delta(f_1,\ldots,f_r)$. 
De plus, comme le coefficient de  $X_{1,n_{1}+1}$ 
dans $\delta^{n_{1}+1-m_{1}}(L_{1})$ est égal à  $1$, le coefficient de  $X_{1,n_{1}+1}^s$ dans $P$ est égal à $M_s(z,\X^\circ)$.  
On obtient que $Q-P\in  \mathfrak I^\delta(f_1,\ldots,f_r)$ est un polynôme de multi-profondeur au plus $(n_1+1,n_2,\ldots,n_r)$ et  de degré au plus $s-1$ en  
la variable $X_{1,n_1+1}$. Par hypothèse de récurrence, on a $Q-P\in \mathfrak J$. 
Comme $P\in \mathfrak J$, 
on en déduit que $Q\in\mathfrak J$ comme souhaité. 
\end{proof}

\subsection{Le cas des $M_q$-fonctions} Soient $f_1,\ldots,f_r$ des $M_q$-fonctions. Pour tout $i$, $1\leq i\leq r$, notons 
$$
a_{i,0}(z)+\cdots + a_{i,m_{i-1}}(z)\sigma_q^{m_i-1}+\sigma_q^{m_i}
$$
le $\sigma_q$-opérateur d'ordre minimal annulant $f_i$, où $a_{i,j}(z)\in\Q(z)$ pour tout $j$, $1\leq j\leq m_i-1$. 
En posant 
$$
	L_i:=a_{i,0}(z)X_{i,0}+\cdots + a_{i,m_{i-1}}(z)X_{i,m_{i-1}}+X_{i,m_i}\,,
	$$
	on a donc $L_i\in  \mathfrak I^{\sigma_q}(f_1,\ldots,f_r)$. 
	Posons ${\bf m}:=(m_1-1,\ldots,m_r-1)$ et 
	$$
	\mathfrak I_{\bf m}^{\sigma_q}(f_1,\dots,f_r) := \mathfrak I^{\sigma_q}(f_1,\ldots,f_r)\cap \Q(z)[(X_{i,j})_{1\leq i \leq r, 0\leq j< m_i }] \,. 
	$$

Rappelons que si $R$ est un anneau muni d'un endomorphisme $\phi$, un $\phi$-idéal $\mathcal I$ de $R$ est un idéal de $R$ tel que $\phi(\mathcal I)\subset \mathcal I$. 
Le $\phi$-idéal engendré par un sous-ensemble $\mathcal S$ de $R$ est par définition le plus petit $\phi$-idéal de $R$ contenant $\mathcal S$. 
L'anneau $\Q(z)[(X_{i,j})_{1\leq i \leq r,j\in \N}]$ peut être muni de l'endomorphisme $\sigma_q$ qui agit  comme précédemment sur $\Q(z)$ et tel que $\sigma_q(X_{i,j})=X_{i,j+1}$. Cette définition implique que $\mathfrak I^{\sigma_q}(f_1,\ldots,f_r)$ 
est un $\sigma_q$-idéal. \`A nouveau, $\mathfrak I^{\sigma_q}(f_1,\ldots,f_r)$ est finiment engendré en tant que $\sigma_q$-idéal et  
on obtient  l'analogue de la proposition \ref{prop: deltaideal}, dont la preuve est par ailleurs identique. 

\begin{prop}\label{prop: sigmaqideal}
En tant que $\sigma_q$-idéal, $\mathfrak I^{\sigma_q}(f_1,\ldots,f_r)$ est engendré par les éléments de $\mathfrak I^{\sigma_q}_{\bf m}(f_1,\ldots,f_r)$ et les formes linéaires $L_1,\ldots,L_r$.
\end{prop}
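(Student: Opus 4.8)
Le plan est de reproduire mot pour mot la d\'emonstration de la proposition \ref{prop: deltaideal}, en rempla\c cant partout la d\'erivation $\delta$ par l'endomorphisme $\sigma_q$. Notons donc $\mathfrak J$ le $\sigma_q$-id\'eal de $\Q(z)[(X_{i,j})_{1\leq i\leq r,\,j\in\N}]$ engendr\'e par les \'el\'ements de $\mathfrak I^{\sigma_q}_{\bf m}(f_1,\ldots,f_r)$ et par les formes lin\'eaires $L_1,\ldots,L_r$. Comme $\mathfrak I^{\sigma_q}(f_1,\ldots,f_r)$ est un $\sigma_q$-id\'eal contenant $\mathfrak I^{\sigma_q}_{\bf m}(f_1,\ldots,f_r)$ et les $L_i$, l'inclusion $\mathfrak J\subset \mathfrak I^{\sigma_q}(f_1,\ldots,f_r)$ est imm\'ediate et toute la difficult\'e r\'eside dans l'inclusion r\'eciproque.

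Pour cette derni\`ere, je proc\'ederais par r\'ecurrence multiple sur la multi-profondeur $(n_1,\ldots,n_r)$ d'un polyn\^ome $Q\in \mathfrak I^{\sigma_q}(f_1,\ldots,f_r)$, exactement comme dans la preuve de la proposition \ref{prop: deltaideal}. Le cas de base $n_i<m_i$ pour tout $i$ est trivial. Pour le pas de r\'ecurrence, on fixe un indice $i$ tel que $n_i+1\geq m_i$, on \'ecrit $Q$ comme polyn\^ome en la variable de plus haut indice $X_{i,n_i+1}$, \`a coefficients $M_\nu(z,\X^\circ)$ ne faisant intervenir que les variables restantes, et l'on m\`ene une r\'ecurrence interne sur le degr\'e $s$ en $X_{i,n_i+1}$. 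L'\'etape cruciale consistera \`a soustraire \`a $Q$ le polyn\^ome
$$
P:=M_s(z,\X^\circ)\,X_{i,n_i+1}^{\,s-1}\,\sigma_q^{\,n_i+1-m_i}(L_i)\,,
$$
qui appartient \`a $\mathfrak J$ car $L_i\in\mathfrak J$ et $\mathfrak J$ est un $\sigma_q$-id\'eal, afin d'abaisser d'une unit\'e le degr\'e en $X_{i,n_i+1}$.

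Le seul point \`a contr\^oler --- et c'est l\`a, me semble-t-il, l'unique subtilit\'e --- est que la d\'emonstration de la proposition \ref{prop: deltaideal} n'utilise jamais de mani\`ere essentielle le fait que $\delta$ soit une d\'erivation (la r\`egle de Leibniz), mais seulement deux propri\'et\'es que $\sigma_q$ partage avec elle : d'une part l'action par d\'ecalage $\sigma_q(X_{i,j})=X_{i,j+1}$, qui assure que $\sigma_q^{\,k}(L_i)$ a pour terme de plus haut indice $X_{i,m_i+k}$ ; d'autre part la normalisation \`a $1$ du coefficient de $X_{i,m_i}$ dans $L_i$, qui garantit que le coefficient de $X_{i,m_i+k}$ dans $\sigma_q^{\,k}(L_i)$ reste \'egal \`a $1$. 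Le fait que $\sigma_q$ agisse sur $\Q(z)$ par $z\mapsto z^q$ plut\^ot que par d\'erivation n'intervient pas, puisqu'on ne suit que le coefficient de la variable de plus haut indice. Avec $k=n_i+1-m_i$, le coefficient de $X_{i,n_i+1}^{\,s}$ dans $P$ vaut alors exactement $M_s(z,\X^\circ)$, de sorte que $Q-P$ est de degr\'e au plus $s-1$ en $X_{i,n_i+1}$ et de multi-profondeur au plus $(n_1,\ldots,n_i+1,\ldots,n_r)$. Les hypoth\`eses de r\'ecurrence, interne puis externe, donnent alors $Q-P\in\mathfrak J$, puis $Q\in\mathfrak J$, ce qui \'etablit l'inclusion $\mathfrak I^{\sigma_q}(f_1,\ldots,f_r)\subset\mathfrak J$ recherch\'ee.
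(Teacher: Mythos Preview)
Your proposal is correct and takes exactly the approach indicated by the paper, which simply states that the proof is \emph{identique} to that of Proposition~\ref{prop: deltaideal}. Your additional remark---that the argument never invokes the Leibniz rule but only the shift $\sigma_q(X_{i,j})=X_{i,j+1}$ together with the normalization of the leading coefficient of $L_i$ to~$1$, so that $\sigma_q^{\,k}(L_i)$ has $X_{i,m_i+k}$ with coefficient~$1$---is precisely the verification needed to justify that the transcription from $\delta$ to $\sigma_q$ goes through unchanged.
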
 

\section{Descente}\label{sec: descente}

Dans cette section, nous montrons comment le corollaire \ref{coro: descente} découle du théorème \ref{thm: orbital} puis nous donnons deux conséquences de ce résultat. 

Rappelons tout d'abord le lemme suivant, qui correspond au lemme 5.3 de \cite{AF17}. 
L'\'enonc\'e donn\'e ici est l\'eg\`erement modifi\'e, mais la preuve reste identique. 

\begin{lem}\label{lem:descent}
	Soient ${\mathbb K}\subset \Q$ un corps de nombres, $h_1(z),\ldots,h_r(z)\in {\mathbb  K}[[z]]$ et $\alpha\in \Q$.  
	Supposons qu'il existe   $w_1(z),\ldots,w_r(z)\in \Q[z]$ tels que 
	\begin{equation}\label{eq:P2-lemdesc}
	w_1(z)h_1(z) + \ldots + w_r(z)h_r(z) =0 \, ,
	\end{equation}
	avec $w_i(\alpha)=0$  pour tout indice $i$ dans un ensemble $ \mathcal I\subset \{1,\ldots,r\}$ et $w_{i_0}(\alpha)\not= 0$ pour un certain indice 
	$i_0\in\{1,\ldots,r\}\setminus \mathcal I$.   
	Alors, il existe $w'_1(z),\ldots,w'_r(z)\in {\mathbb K}[z]$, tels que 
	$$
	w'_1(z)h_1(z) + \ldots + w'_r(z)h_r(z) =0 \, ,
	$$
	avec $w'_i(\alpha)=0$ pour tout indice $i\in \mathcal I$ et $w'_{i_0}(\alpha)\not=0$. 
\end{lem}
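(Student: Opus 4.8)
The plan is to prove this descent lemma (Lemma~\ref{lem:descent}) by a linear-algebra argument over the field $\mathbb K$, exploiting that the relation~\eqref{eq:P2-lemdesc} has coefficients in $\Q$ while the functions $h_i$ have coefficients in the smaller field $\mathbb K$. The key observation is that the \emph{space} of polynomial relations
$$
V := \left\{ (w_1,\ldots,w_r)\in \Q[z]^r \;:\; w_1 h_1 + \cdots + w_r h_r = 0 \right\}
$$
is defined by linear conditions over $\mathbb K$ (comparing Taylor coefficients, which lie in $\mathbb K$), and hence admits a basis consisting of elements of $\mathbb K[z]^r$. More precisely, if one bounds the degrees of the $w_i$ by some $N$, the solutions form a finite-dimensional $\Q$-vector space cut out inside $\mathbb K[z]_{\leq N}^r$ (a $\mathbb K$-vector space) by $\mathbb K$-linear equations; therefore $V_{\leq N}\otimes$-relations descend and $V_{\leq N} = V_{\leq N,\mathbb K}\otimes_{\mathbb K}\Q$, where $V_{\leq N,\mathbb K}$ denotes the $\mathbb K$-rational solutions.

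First I would fix the given solution $\bw=(w_1,\ldots,w_r)$ and let $N$ bound its degrees, so that $\bw\in V_{\leq N}$. By the disjointness just described, I can write $\bw = \sum_k \lambda_k \bw^{(k)}$ where the $\bw^{(k)}\in \mathbb K[z]^r$ form a $\mathbb K$-basis of $V_{\leq N,\mathbb K}$ and $\lambda_k\in\Q$. The second step is to arrange the two evaluation conditions at $\alpha$: we need some $\mathbb K$-rational combination $\bw'$ of the $\bw^{(k)}$ with $w'_i(\alpha)=0$ for $i\in\mathcal I$ and $w'_{i_0}(\alpha)\neq 0$. Consider the $\mathbb K$-linear evaluation map $\Phi:V_{\leq N,\mathbb K}\to \Q^{\,|\mathcal I|}$ sending $\bv\mapsto (v_i(\alpha))_{i\in\mathcal I}$, together with the separate linear functional $\bv\mapsto v_{i_0}(\alpha)$. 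The hypothesis tells us that $\bw$ itself lies in $\ker(\Phi\otimes\Q)$ but not in the kernel of the $i_0$-functional. By linear-disjointness of $\mathbb K$ and $\Q$, $\ker(\Phi\otimes\Q)=\ker(\Phi)\otimes_{\mathbb K}\Q$, and the functional $\bv\mapsto v_{i_0}(\alpha)$ does not vanish identically on $\ker(\Phi)$ (otherwise it would vanish on the whole $\Q$-span, contradicting $w_{i_0}(\alpha)\neq 0$). Hence there exists $\bw'\in\ker(\Phi)\subset\mathbb K[z]^r$ with $w'_{i_0}(\alpha)\neq 0$, which is exactly the desired relation.

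The step I expect to require the most care is the honest bookkeeping of the linear-disjointness: one must check that the $\mathbb K$-defined linear system genuinely has its coefficient matrix over $\mathbb K$, so that extending scalars to $\Q$ commutes with taking kernels. The coefficient-comparison equations for $w_1 h_1+\cdots+w_r h_r=0$ involve only the Taylor coefficients of the $h_i$, which lie in $\mathbb K$; the evaluation-at-$\alpha$ conditions, however, involve $\alpha\in\Q$, so the evaluation map $\Phi$ is a priori only $\Q$-linear. The resolution is to keep $\Phi$ as a map defined on the already-$\mathbb K$-rational space $V_{\leq N,\mathbb K}$ and to treat $\alpha$ as a scalar in $\Q$: since $\mathbb K$ and $\Q$ are linearly disjoint over $\mathbb K$ (trivially, as $\mathbb K\subset\Q$), the compatibility $\ker(\Phi\otimes\Q)=\ker(\Phi)\otimes_{\mathbb K}\Q$ holds because the kernel of a linear map between vector spaces is stable under flat base change. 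Once this is in place the argument is purely formal, and the proof concludes by taking $\bw'$ as constructed.
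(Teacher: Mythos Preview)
Your first step is sound: the space $V_{\leq N}$ of degree-$\leq N$ polynomial relations among the $h_i$ is cut out by $\mathbb K$-linear equations (coefficient comparison in $\mathbb K[[z]]$), so $V_{\leq N}=V_{\leq N,\mathbb K}\otimes_{\mathbb K}\Q$. The problem lies in your treatment of the evaluation conditions when $\alpha\notin\mathbb K$.

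You define $\Phi:V_{\leq N,\mathbb K}\to\Q^{|\mathcal I|}$ and then assert that the given $\bw$ lies in $\ker(\Phi\otimes_{\mathbb K}\Q)$. This is not what the hypothesis says. The map $\Phi\otimes\Q$ lands in $\Q^{|\mathcal I|}\otimes_{\mathbb K}\Q$, and the actual evaluation map $\mathrm{ev}:V_{\leq N}\to\Q^{|\mathcal I|}$ is the \emph{composition} of $\Phi\otimes\Q$ with the multiplication $\mu:\Q^{|\mathcal I|}\otimes_{\mathbb K}\Q\to\Q^{|\mathcal I|}$. Since $\mu$ is far from injective, $\ker(\mathrm{ev})$ is strictly larger than $\ker(\Phi\otimes\Q)=\ker(\Phi)\otimes\Q$ in general. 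Concretely: take $\mathbb K=\mathbb Q$, $\mathcal I=\{1\}$, $\alpha\notin\mathbb Q$, and suppose $V_{\leq N,\mathbb Q}$ has a basis $\bw^{(1)},\bw^{(2)}$ with $w_1^{(1)}=1$, $w_1^{(2)}=z$. Then $\ker\Phi=\{(a,b)\in\mathbb Q^2:a+b\alpha=0\}=0$, yet $\bw:=\alpha\,\bw^{(1)}-\bw^{(2)}\in V_{\leq N}$ satisfies $w_1(\alpha)=0$. So within $V_{\leq N,\mathbb K}$ there may be \emph{no} element with $w'_i(\alpha)=0$ for $i\in\mathcal I$ at all, and your argument collapses. (Your proof \emph{is} correct when $\alpha\in\mathbb K$, since then evaluation at $\alpha$ is a genuinely $\mathbb K$-linear condition; this is in fact the only case used later in the paper, but the lemma is stated for arbitrary $\alpha\in\Q$.)

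The standard repair, which is what underlies the proof in \cite[Lemme~5.3]{AF17} referenced here, is to replace the $\Q$-defined condition ``$w_i(\alpha)=0$'' by the $\mathbb K$-defined condition ``$D\mid w_i$'', where $D\in\mathbb K[z]$ is the minimal polynomial of $\alpha$ over $\mathbb K$. Multiplying the given relation through by $D(z)/(z-\alpha)\in\Q[z]$ produces a new $\Q$-relation $\tilde\bw$ with $D\mid\tilde w_i$ for $i\in\mathcal I$ and $\tilde w_{i_0}(\alpha)=D'(\alpha)w_{i_0}(\alpha)\neq 0$. Now the constrained relation space $\{\bv:\sum v_ih_i=0,\ D\mid v_i\ (i\in\mathcal I)\}$ \emph{is} cut out by $\mathbb K$-linear equations, so it descends, and the non-vanishing of the $i_0$-evaluation on its $\Q$-points forces non-vanishing on some $\mathbb K$-point, exactly as in your final step.
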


\begin{proof}[D\'emonstration du corollaire \ref{coro: descente}]
        Nous d\'emontrons seulement le cas (E), la preuve du cas {\rm(${\rm M}_q$)} étant identique. 
        
	Soit $\mathbb K\subset \Q$ un corps. Supposons que les nombres $\xi_1,\ldots,\xi_r\in {\bf E}_\mathbb K$ sont lin\'eairement d\'ependants sur $\Q$.  
	Quitte \`a choisir un sous-corps de $\mathbb K$, on peut sans perte de g\'en\'eralit\'e supposer que $\mathbb K$ est un corps de nombres (puisque les coefficients d'une $E$-fonction engendre toujours une extension finie de $\mathbb Q$).  Il existe donc des $E$-fonctions $f_1(z),\ldots,f_r(z)\in\mathbb K[[z]]$ telles que $\xi_1=f_1(1),\ldots,\xi_r=f_r(1)$. D'apr\`es le th\'eor\`eme \ref{thm: orbital}, il existe une  relation $\delta$-lin\'eaire  sur $\Q(z)$ entre les fonctions $f_1(z),\ldots,f_r(z)$ qui se sp\'ecialise en $z=1$ en une relation non triviale entre les nombres $\xi_1,\ldots,\xi_r$. Notons que les d\'eriv\'ees successives des $f_i$ restent \`a coefficients dans $\mathbb K$. D'apr\`es le lemme \ref{lem:descent}, il existe donc une  relation $\delta$-lin\'eaire  sur $\mathbb K(z)$ entre les fonctions $f_1(z),\ldots,f_r(z)$ qui se sp\'ecialise au point $1$ en une relation non triviale  entre les nombres $\xi_1,\ldots,\xi_r$. La relation obtenue est n\'ecessairement \`a coefficients dans $\mathbb K$ et on obtient donc que les nombres $\xi_1,\ldots,\xi_r$ sont 
	lin\'eairement d\'ependants sur $\mathbb K$. 
			
	En d'autres termes,  les extensions ${\bf E}_{\mathbb K}$ et $\Q$ sont lin\'eairement disjointes sur $\mathbb K$. Le fait que 
	${\bf E}={\bf E}_\mathbb K\otimes_\mathbb K\Q$ 
	d\'ecoule classiquement de cette propri\'et\'e (voir, par exemple, \cite[Chapter V, §2]{Bo} et \cite[Chapter VII, §3]{La}).  	
\end{proof}

Les deux résultats suivants découlent également du corrolaire \ref{coro: descente}. 
Dans le cas (E), ils correspondent respectivement au Corrolary 3 et au Lemma 1 de 
\cite{FR23}. Dans le cas {\rm(${\rm M}_q$)}, le corollaire \ref{coro: dichotomie} correspond au corollaire 1.8 de \cite{AF17}.

\begin{coro}\label{coro: descente2}
	On a les deux r\'esultats suivants. 
	
	\begin{itemize}
	
	\item[{\rm (E)}]  Soient ${\mathbb K} \subset \Q$ un corps de nombres et $w_1,\ldots,w_d$ une base du $\mathbb Q$-espace vectoriel $\mathbb K$. Alors, on a 
	$$\mbox{\bf E}_{\mathbb K}=w_1\mbox{\bf E}_{\mathbb Q}\oplus\cdots \oplus w_d\mbox{\bf E}_{\mathbb Q}\,.$$ 
	
	\item[{\rm(${\rm M}_q$)}]  Soient ${\mathbb K} \subset \Q$ un corps de nombres,  $\alpha\in \mathbb K$, $0<\vert\alpha\vert<1$, et $w_1,\ldots,w_d$ une base du $\mathbb Q(\alpha)$-espace vectoriel $\mathbb K$. Alors, on a 
	$$\mbox{\bf M}_{q,\alpha,\mathbb K}=w_1\mbox{\bf M}_{q,\alpha,\mathbb Q(\alpha)}\oplus\cdots \oplus w_d\mbox{\bf M}_{q,\alpha,\mathbb Q(\alpha)}\,.$$ 	
	\end{itemize}
\end{coro}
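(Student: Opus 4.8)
The plan is to prove each of the two assertions in the same two steps: first the set equality $\mbox{\bf E}_{\mathbb K}=\sum_{i=1}^d w_i\mbox{\bf E}_{\mathbb Q}$ (resp. its $M_q$-analogue), and then the directness of the sum. The directness is precisely where Corollary \ref{coro: descente} is used, applied with the ground field $\mathbb Q$ in case (E) and $\mathbb Q(\alpha)$ in case (${\rm M}_q$); the set equality rests on a Galois decomposition of the Taylor coefficients.

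Consider first case (E). For the inclusion $\supseteq$, if $g$ is an $E$-function with coefficients in $\mathbb Q$ then $w_i g$ is an $E$-function with coefficients in $\mathbb K$, so $w_i g(1)\in\mbox{\bf E}_{\mathbb K}$; since $\mbox{\bf E}_{\mathbb K}$ is a ring, the whole sum lies in $\mbox{\bf E}_{\mathbb K}$. For $\subseteq$, given $\xi=f(1)$ with $f=\sum_n(a_n/n!)z^n$ an $E$-function and $a_n\in\mathbb K$, I would write $a_n=\sum_{i=1}^d w_i a_{n,i}$ with $a_{n,i}\in\mathbb Q$ and set $f_i:=\sum_n(a_{n,i}/n!)z^n\in\mathbb Q[[z]]$, so that $f=\sum_i w_i f_i$ and $\xi=\sum_i w_i f_i(1)$. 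The key point is that each $f_i$ is again an $E$-function: writing $\sigma_1,\ldots,\sigma_d$ for the embeddings of $\mathbb K$ into $\Q$, the invertibility of the matrix $(\sigma_j(w_i))$ expresses $f_i$ as a $\Q$-linear combination $\sum_j c_{ij}\sigma_j(f)$ of the conjugates $\sigma_j(f)$, each of which is an $E$-function (the growth conditions are $\mathrm{Gal}(\Q/\mathbb Q)$-invariant by definition, and the differential equation conjugates), and $E$-functions form a $\Q$-algebra. As $f_i\in\mathbb Q[[z]]$ by construction, $f_i(1)\in\mbox{\bf E}_{\mathbb Q}$, giving $\xi\in\sum_i w_i\mbox{\bf E}_{\mathbb Q}$.

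For the directness, suppose $\sum_{i=1}^d w_i\eta_i=0$ with $\eta_i\in\mbox{\bf E}_{\mathbb Q}$. I would choose a $\mathbb Q$-basis $\theta_1,\ldots,\theta_k$ of the $\mathbb Q$-span of the $\eta_i$, write $\eta_i=\sum_l p_{il}\theta_l$ with $p_{il}\in\mathbb Q$, so that $\sum_l(\sum_i w_i p_{il})\theta_l=0$. By Corollary \ref{coro: descente}(E) with $\mathbb K=\mathbb Q$, i.e. the linear disjointness of $\mbox{\bf E}_{\mathbb Q}$ and $\Q$ over $\mathbb Q$, the $\mathbb Q$-independent family $\theta_1,\ldots,\theta_k$ stays $\Q$-independent, whence each coefficient $\sum_i w_i p_{il}$ vanishes; since the $w_i$ are $\mathbb Q$-linearly independent and $p_{il}\in\mathbb Q$, all $p_{il}=0$, hence all $\eta_i=0$. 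This yields $\mbox{\bf E}_{\mathbb K}=w_1\mbox{\bf E}_{\mathbb Q}\oplus\cdots\oplus w_d\mbox{\bf E}_{\mathbb Q}$.

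Case (${\rm M}_q$) follows the same two steps over the ground field $\mathbb Q(\alpha)$, with Corollary \ref{coro: descente}(${\rm M}_q$) legitimately applicable for $\mathbb K=\mathbb Q(\alpha)$ since $\alpha\in\mathbb Q(\alpha)$ and $0<|\alpha|<1$. Decomposing the coefficients of an $M_q$-function $f$ over the $\mathbb Q(\alpha)$-basis $w_1,\ldots,w_d$ produces $M_q$-functions $f_i$ with coefficients in $\mathbb Q(\alpha)$, again realised as $\Q$-linear combinations $\sum_j c_{ij}\tau_j(f)$ over the embeddings $\tau_j$ of $\mathbb K$ fixing $\mathbb Q(\alpha)$, and the directness argument is verbatim the one above with $\mathbb Q$ replaced by $\mathbb Q(\alpha)$. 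The one genuinely new difficulty, which I expect to be the main obstacle, is that $M_q$-functions are merely meromorphic on $D(0,1)$, so before writing $f_i(\alpha)\in\mbox{\bf M}_{q,\alpha,\mathbb Q(\alpha)}$ I must check that each $f_i$ has no pole at $\alpha$. I would argue thus: letting $P_i$ denote the principal part of $f_i$ at $\alpha$, its coefficients are algebraic and, because $f_i$ and $\alpha$ are both defined over $\mathbb Q(\alpha)$ while the $\tau_j$ fix $\alpha$, they are invariant under $\mathrm{Gal}(\Q/\mathbb Q(\alpha))$ and hence lie in $\mathbb Q(\alpha)$; as $f=\sum_i w_i f_i$ is analytic at $\alpha$, one has $\sum_i w_i P_i=0$, and the $\mathbb Q(\alpha)$-linear independence of the $w_i$ forces every $P_i$ to vanish, so every $f_i$ is analytic at $\alpha$. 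Making rigorous the Galois-equivariance of the local Laurent expansion, namely that conjugating coefficients commutes with taking the local expansion at a fixed algebraic point, which follows from the $\tau_j$-invariance of the minimal Mahler equation, is the delicate step; once it is secured the remainder is identical to case (E).
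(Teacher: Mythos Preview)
The paper does not give a proof of this corollary, merely recording that it follows from Corollary~\ref{coro: descente} (and pointing to \cite{FR23} in case~(E)); your derivation from that corollary is the intended route, and your treatment of case~(E) is complete and correct.

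In case~(${\rm M}_q$) the strategy is right but the pole argument contains a genuine gap. You assert that the coefficients of the principal part of $f_i$ at $\alpha$ are algebraic, reasoning by Galois invariance. This claim is false for a general $M_q$-function in $\mathbb Q(\alpha)[[z]]$: with $t(z)=\prod_{k\geq 0}(1-z^{2^k})\in\mathbb Q[[z]]$ (an $M_2$-function analytic on $D(0,1)$), the product $g(z)=t(z)/(1-2z)\in\mathbb Q[[z]]$ is an $M_2$-function whose residue at $1/2$ equals $-t(1/2)/2$, transcendental by Theorem~M1. Since your Galois-invariance reasoning would apply verbatim to such a $g$, it cannot be valid; once the Laurent coefficients are transcendental there is no Galois action on them to invoke, so the phrase ``Galois-equivariance of the local Laurent expansion'' is not even well-posed. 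The conclusion that each $f_i$ is analytic at $\alpha$ is nevertheless true, and can be salvaged by routing the residues through $\mbox{\bf M}_{q,\alpha,\mathbb Q(\alpha)}$ rather than through $\mathbb Q(\alpha)$: if $N\geq 1$ were the maximal pole order of the $f_i$ at $\alpha$, then each $g_i(z):=(z-\alpha)^N f_i(z)$ is an $M_q$-function in $\mathbb Q(\alpha)[[z]]$ analytic at $\alpha$, so that $g_i(\alpha)\in\mbox{\bf M}_{q,\alpha,\mathbb Q(\alpha)}$; from $\sum_i w_i g_i(\alpha)=\bigl[(z-\alpha)^N f(z)\bigr]_{z=\alpha}=0$, your own directness step (Corollary~\ref{coro: descente} with ground field $\mathbb Q(\alpha)$) forces every $g_i(\alpha)=0$, contradicting the choice of $N$.
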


\begin{rem}
On peut montrer que  ${\bf M}_{q,\alpha,\mathbb Q(\alpha)}={\bf M}_{q,\alpha,\mathbb Q}$ et donc simplifier légèrement l'expression du cas {\rm(${\rm M}_q$)}. 
\end{rem}

\begin{coro}\label{coro: dichotomie}
	Soit ${\mathbb K} \subset \Q$ un corps de nombres. On a les deux r\'esultats suivants.
	
	\begin{itemize}
	
	\item[{\rm (E)}]  Si $f(z)$ est une $E$-fonction  à coefficients dans $\mathbb K$ et 
	$\alpha\in \mathbb K$, alors on a l'alternative suivante : soit  $f(\alpha)\in\mathbb K$, soit $f(\alpha)$ est transcendant. 
	
	\item[{\rm(${\rm M}_q$)}]  Si $f(z)$ est une $M_q$-fonction  à coefficients dans $\mathbb K$ et 
	$\alpha\in \mathbb K$, $0<\vert\alpha\vert<1$, n'est pas un p\^ole de $f(z)$, alors on a l'alternative suivante : soit  $f(\alpha)\in \mathbb K$, soit $f(\alpha)$ est transcendant.	
	\end{itemize}
\end{coro}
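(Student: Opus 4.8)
The plan is to deduce both cases directly from Corollaire \ref{coro: descente}, which asserts that $\mbox{\bf E}_{\mathbb K}$ (resp. $\mbox{\bf M}_{q,\alpha,\mathbb K}$) and $\Q$ are linearly disjoint over $\mathbb K$. The whole argument amounts to applying this linear disjointness to the two-element family $\{1,\,f(\alpha)\}$, so almost all the content is already packaged in the corollary.

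For the case (E), I would first check that $f(\alpha)$ belongs to $\mbox{\bf E}_{\mathbb K}$. Since $\alpha\in\mathbb K$ and the ring of $E$-functions is stable under the substitution $z\mapsto\alpha z$, the series $g(z):=f(\alpha z)$ is again an $E$-function, and its Taylor coefficients stay in $\mathbb K$ because those of $f$ do and $\alpha\in\mathbb K$. Hence $f(\alpha)=g(1)\in\mbox{\bf E}_{\mathbb K}$. Moreover the constant $1$ lies in $\mbox{\bf E}_{\mathbb K}$, being the value at $z=1$ of the $E$-function $1$. Now suppose $f(\alpha)$ is \emph{not} transcendental, i.e. $f(\alpha)=\beta\in\Q$. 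Then $\beta\cdot 1-1\cdot f(\alpha)=0$ is a nontrivial $\Q$-linear dependence between the two elements $1$ and $f(\alpha)$ of $\mbox{\bf E}_{\mathbb K}$, since the coefficient of $f(\alpha)$ equals $-1\neq 0$. By Corollaire \ref{coro: descente}, these two elements must then be linearly dependent over $\mathbb K$: there exist $c_0,c_1\in\mathbb K$, not both zero, with $c_0+c_1 f(\alpha)=0$. As $1\neq 0$, the case $c_1=0$ would force $c_0=0$; hence $c_1\neq 0$ and $f(\alpha)=-c_0/c_1\in\mathbb K$, which is exactly the stated dichotomy.

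For the case (${\rm M}_q$) the argument is word for word the same, with $\mbox{\bf M}_{q,\alpha,\mathbb K}$ in place of $\mbox{\bf E}_{\mathbb K}$ and Corollaire \ref{coro: descente}(${\rm M}_q$) in place of its $(E)$ part. Here the membership $f(\alpha)\in\mbox{\bf M}_{q,\alpha,\mathbb K}$ is immediate from the definition, because $f$ has coefficients in $\mathbb K$ and no pole at $\alpha$, so no change of variable is even needed; the constant $1$ again lies in $\mbox{\bf M}_{q,\alpha,\mathbb K}$.

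I do not expect a genuine obstacle in either case: the transcendence input is entirely absorbed into Corollaire \ref{coro: descente} (itself a consequence of Théorème \ref{thm: orbital}), and what remains is the elementary two-dimensional linear-disjointness argument above. The only point that deserves a little care is the verification that both $f(\alpha)$ and $1$ actually belong to the relevant ring of values — in particular, in the $(E)$ case, the appeal to the stability of the ring of $E$-functions under $z\mapsto\alpha z$, which is what lets one move from an evaluation at $\alpha$ to an evaluation at $1$ while keeping the coefficients in $\mathbb K$.
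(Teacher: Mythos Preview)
Your proposal is correct and matches the paper's approach: the paper states that Corollaire~\ref{coro: dichotomie} ``d\'ecoule \'egalement du corollaire~\ref{coro: descente}'' and gives no further proof, and the two-element linear-disjointness argument you spell out for $\{1,f(\alpha)\}$ is precisely the intended deduction. The only detail you add beyond what the paper leaves implicit is the verification that $f(\alpha)\in\mbox{\bf E}_{\mathbb K}$ via the substitution $z\mapsto\alpha z$, which is indeed the right way to see it.
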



\section{Quelques remarques liées à un article de Fischler et Rivoal}\label{sec: FR}

Dans la section \ref{sec: descente}, nous avons déduit du théorème \ref{thm: orbital} plusieurs énoncés également obtenus par Fischler et Rivoal pour les $E$-fonctions (à savoir les Theorem 2, Theorem 3 et le Corrolary 3 de \cite{FR23}), ainsi que leurs analogues dans le cadre des $M_q$-fonctions. Ces résultats ont été obtenus indépendamment de \cite{FR23}, contrairement  
aux résultats présentés dans cette section qui ont été inspirés par \cite{FR23}. 

\subsection{Considérations galoisiennes}

Nous démontrons d'abord, pour les $M_q$-fonctions, des résultats analogues aux propositions 1 et 4 de \cite{FR23}. 

\'Etant donnés une série formelle $f(z)=\sum_{n=0}^\infty a(n)z^n\in \Q[[z]]$ et un élément $\tau$ de ${\rm Gal}(\Q/\mathbb Q)$, on pose $f^{\tau}(z):=\sum_{n=0}^\infty \tau(a(n))z^n$. 
On vérifie aisément que si $f$ est une $M_q$-fonction, il en est de même de 
$f^{\tau}$. 
Le résultat suivant est une conséquence directe du  théorème \ref{thm: orbital}. 

\begin{prop}
	\label{prop:conjugates}
	Soient $f$ une $M_q$-fonction et $\alpha \in \Q$, $0<\vert \alpha\vert<1$, tel que $f(\alpha) \in \Q$. Pour tout $\tau \in {\rm Gal}(\Q/\mathbb Q)$ tel que $\vert \tau(\alpha)\vert <1$, 
	on a $f^\tau(\tau(\alpha))=\tau(f(\alpha))$.
\end{prop}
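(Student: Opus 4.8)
Le plan est de d\'eduire directement l'\'enonc\'e du th\'eor\`eme \ref{thm: orbital} appliqu\'e au couple de $M_q$-fonctions $(f,1)$, apr\`es avoir \og nettoy\'e\fg{} les p\^oles comme dans la preuve de ce th\'eor\`eme. Posons $\beta:=f(\alpha)\in\Q$ et choisissons un r\'eel $R$ avec $\vert\alpha\vert<R<1$. Comme $f$ n'a qu'un nombre fini de p\^oles dans $D(0,R)$ et que $\alpha$ n'en est pas un, il existe $D(z)\in\Q[z]$ tel que $g:=Df$ soit analytique sur $D(0,R)$ et $D(\alpha)=1$. Les s\'eries $g$ et $D$ sont alors des $M_q$-fonctions analytiques au point $\alpha$, et la relation homog\`ene de degr\'e $1$ donn\'ee par $P(X_1,X_2):=X_1-\beta X_2$ v\'erifie $P(g(\alpha),D(\alpha))=\beta-\beta=0$.

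Si cette relation est banale, elle provient par d\'efinition d'un \'el\'ement de $\mathfrak I(g,D)$ ; sinon, le th\'eor\`eme \ref{thm: orbital} assure qu'elle s'obtient par d\'eg\'en\'erescence en $\alpha$ d'une relation $\sigma_q$-alg\'ebrique homog\`ene. Dans les deux cas, on dispose d'un $Q_0\in\Q[z,(X_{i,j})]$, homog\`ene, tel que $Q_0(z,\sigma_q^j(g),\sigma_q^j(D))=0$ et $Q_0(\alpha,\cdot)=P$. Puisque $P$ est de degr\'e $1$ et $Q_0$ homog\`ene, $Q_0$ est lin\'eaire : \'ecrivons $Q_0=\sum_{i,j}c_{i,j}(z)X_{i,j}$ avec $c_{i,j}(z)\in\Q[z]$. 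La condition $Q_0(\alpha,\cdot)=X_{1,0}-\beta X_{2,0}$ se traduit par $c_{1,0}(\alpha)=1$, $c_{2,0}(\alpha)=-\beta$ et $c_{i,j}(\alpha)=0$ sinon, tandis que la relation fonctionnelle s'\'ecrit
$$\sum_j c_{1,j}(z)\,g(z^{q^j})+\sum_j c_{2,j}(z)\,D(z^{q^j})=0 \quad\text{dans } \Q[[z]]\,.$$

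L'\'etape cl\'e consiste \`a appliquer l'automorphisme $\tau$ coefficient par coefficient. Comme $h\mapsto h^\tau$ est un endomorphisme d'anneau de $\Q[[z]]$ commutant \`a la substitution $z\mapsto z^{q^j}$, on obtient l'identit\'e conjugu\'ee
$$\sum_j c_{1,j}^\tau(z)\,g^\tau(z^{q^j})+\sum_j c_{2,j}^\tau(z)\,D^\tau(z^{q^j})=0\,,$$
o\`u $g^\tau=D^\tau f^\tau$. Il reste \`a sp\'ecialiser en $z=\tau(\alpha)$. En utilisant l'\'egalit\'e $c_{i,j}^\tau(\tau(\alpha))=\tau(c_{i,j}(\alpha))$, seuls subsistent les termes d'indices $(1,0)$ et $(2,0)$, ce qui donne $g^\tau(\tau(\alpha))-\tau(\beta)\,D^\tau(\tau(\alpha))=0$. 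Comme $D^\tau(\tau(\alpha))=\tau(D(\alpha))=1$ et $g^\tau(\tau(\alpha))=D^\tau(\tau(\alpha))f^\tau(\tau(\alpha))=f^\tau(\tau(\alpha))$, on conclut $f^\tau(\tau(\alpha))=\tau(\beta)=\tau(f(\alpha))$.

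Le principal obstacle est de l\'egitimer cette sp\'ecialisation : il faut v\'erifier que la $M_q$-fonction $g^\tau$ n'a de p\^ole en aucun des points $\tau(\alpha)^{q^j}=\tau(\alpha^{q^j})$, sans quoi les termes d'indice $j\geq 1$ (dont le coefficient $c_{1,j}^\tau(\tau(\alpha))$ s'annule) donneraient des expressions ind\'etermin\'ees. Pour cela, j'utiliserais la proposition \ref{prop: mreg} : puisque $g$ est analytique sur $D(0,R)$, elle est annul\'ee par un $\sigma_q$-op\'erateur $L$ sans singularit\'e sur $D^*(0,R)$ ; son conjugu\'e $L^\tau$ annule $g^\tau$, et comme chaque $\alpha^{q^j}\in D^*(0,R)$ est r\'egulier pour $L$, le point $\tau(\alpha)^{q^j}$ est r\'egulier pour $L^\tau$ --- les racines des coefficients de $L^\tau$ \'etant les images par $\tau$ de celles de $L$. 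On conclut alors, par l'argument du lemme \ref{lem: mqop} men\'e par r\'ecurrence descendante le long de l'orbite $(\tau(\alpha)^{q^\ell})_{\ell\geq 0}$, dont les modules tendent vers $0$ (o\`u $g^\tau$ est analytique), que $g^\tau$ est analytique en chacun des points $\tau(\alpha)^{q^j}$. Ceci valide la sp\'ecialisation et ach\`eve la d\'emonstration.
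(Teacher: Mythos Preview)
Your proof is correct and follows essentially the same route as the paper: apply Th\'eor\`eme~\ref{thm: orbital} to obtain a $\sigma_q$-linear relation that specializes to the given one at~$\alpha$, conjugate it by~$\tau$, and specialize at~$\tau(\alpha)$. The only difference is that the paper works directly with the pair $(1,f)$ while you pass through $(g,D)=(Df,D)$; this detour is what lets you invoke Proposition~\ref{prop: mreg} and justify, via the descending induction along the orbit $(\tau(\alpha)^{q^\ell})_{\ell\geq 0}$, that $g^\tau$ is genuinely analytic at each $\tau(\alpha)^{q^j}$ --- a point the paper's own proof passes over in silence when it simply ``sp\'ecialise au point $\tau(\alpha)$''. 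Your extra paragraph therefore closes a small gap rather than introducing a new idea, and in particular yields for free that $f^\tau$ is defined at $\tau(\alpha)$ (since $D^\tau(\tau(\alpha))=1$).
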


\begin{proof}
	Notons tout d'abord que si $p(z) \in \Q[z]$ on a $p^\tau(\tau(\alpha))=\tau(p(\alpha))$ et que le résultat est évident si $\alpha=0$. On suppose désormais que $\alpha\in\Q^*$. Comme $f(\alpha) \in \Q$, les nombres $1$ et $f(\alpha)$ sont linéairement dépendants sur $\Q$. 
	D'après le théorème \ref{thm: orbital}, une telle relation est la spécialisation en $z=\alpha$ d'une relation $\sigma_q$-linéaire (éventuellement dégénérée) entre les fonctions $1$ et $f(z)$. 
	Cela signifie qu'il existe un entier $m$ et des polynômes $p_{-1}(z),\ldots,p_m(z) \in \Q[z]$ tels que 	
	\begin{equation}
	\label{eq:orbitale_rel}
	p_{-1}(z)+p_0(z)f+p_1(z)\sigma_q(f)+\cdots + p_m(z)\sigma_q^m(f)=0 
	\end{equation}
	et 
	\begin{equation}
	\label{eq:cond_annulation}
	p_{-1}(\alpha) = f(\alpha),\ p_0(\alpha)=-1,\ p_1(\alpha)=\cdots = p_m(\alpha)=0\,.
	\end{equation}
	Comme $\tau$ et $\sigma_q$ commutent, en appliquant $\tau$ à \eqref{eq:orbitale_rel}, on obtient 
		\begin{equation}
	\label{eq:orbitale_rel_conj}
	p_{-1}^\tau(z)+p_0^\tau(z)f^\tau+p_1^\tau(z)\sigma_q(f^\tau)+\cdots + p_m^\tau(z)\sigma_q^m(f^\tau)=0\,.
	\end{equation}
	En spécialisant au point  $\tau(\alpha)$ et en utilisant  \eqref{eq:cond_annulation}, on obtient 
	$
	f^\tau(\tau(\alpha))  = \tau(f(\alpha))$, comme souhaité.	
\end{proof}

\begin{coro}
	Soient $f(z)$ une $M_q$-fonction à coefficients dans un corps de nombres $\mathbb K$ et $\alpha \in \Q$, $\vert \alpha\vert <1$. Les énoncés suivants sont équivalents : 
	\begin{enumerate}[label=(\roman*)]
		\item[{\rm (i)}] $f(\alpha)=0$,
		\item[{\rm (ii)}]   il existe $\tau \in {\rm Gal}(\Q/\mathbb Q)$ tel que  $\vert \tau(\alpha)\vert<1$ et $f^\tau(\tau(\alpha))=0$, 
		\item[{\rm (iii)}]  pour tout $\tau \in {\rm Gal}(\Q/\mathbb Q)$ tel que $\vert \tau(\alpha)\vert<1$, on a $f^\tau(\tau(\alpha))=0$,  
		\item[{\rm (iv)}]  Soit $D$ le polynôme minimal de $\alpha$ sur $\mathbb K$. La $M_q$-fonction 
		$g(z):=f(z)/D(z)\in\mathbb K[[z]]$ est bien définie en $\tau(\alpha)$ pour tout $\tau \in {\rm Gal}(\Q/\mathbb K)$ tel que $\vert \tau(\alpha)\vert <1$. 	
	\end{enumerate}
	\end{coro}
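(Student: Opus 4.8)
The engine of the whole argument is the identity $f^\tau(\tau(\alpha))=\tau(f(\alpha))$ supplied by Proposition \ref{prop:conjugates}, valid for any $M_q$-function whose value at $\alpha$ is algebraic and any $\tau\in{\rm Gal}(\Q/\mathbb{Q})$ with $0<|\tau(\alpha)|<1$, together with the elementary remark that conjugating Taylor coefficients is functorial, namely $(f^\tau)^\rho=f^{\rho\tau}$. The plan is to run the cycle (i)$\Rightarrow$(iii)$\Rightarrow$(ii)$\Rightarrow$(i) and then prove (i)$\Leftrightarrow$(iv) separately. First I would dispose of the degenerate case $\alpha=0$: there $\tau(\alpha)=0$ for every $\tau$, and each of (i)--(iv) is immediately seen to amount to the vanishing of the constant coefficient of $f$ (for (iv) one has $D(z)=z$, and $f/z$ is holomorphic at $0$ exactly when this coefficient vanishes). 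From then on I assume $0<|\alpha|<1$, so that Proposition \ref{prop:conjugates} is available; note that at every point where I invoke it the relevant value will be $0$, hence algebraic, so its hypothesis is automatically met.

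For the first cycle: if $f(\alpha)=0$ then for each admissible $\tau$ Proposition \ref{prop:conjugates} gives $f^\tau(\tau(\alpha))=\tau(0)=0$, which is (iii); and (iii)$\Rightarrow$(ii) is immediate on taking $\tau={\rm id}$, admissible since $|\alpha|<1$. The interesting step is (ii)$\Rightarrow$(i): here I would apply Proposition \ref{prop:conjugates} \emph{not} to $f$ but to the $M_q$-function $h:=f^\tau$ at the point $\beta:=\tau(\alpha)$, which satisfies $0<|\beta|<1$, and to the automorphism $\rho:=\tau^{-1}$. Since $|\rho(\beta)|=|\alpha|<1$ and $h(\beta)=0\in\Q$, the proposition yields $h^\rho(\rho(\beta))=\rho(h(\beta))=0$; but $h^\rho=(f^\tau)^{\tau^{-1}}=f$ and $\rho(\beta)=\alpha$, so $f(\alpha)=0$, which is (i).

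For the equivalence with (iv), I would first record that, $\mathbb{K}$ being a number field, the minimal polynomial $D$ is separable, so its roots are exactly the distinct $\mathbb{K}$-conjugates $\tau(\alpha)$, $\tau\in{\rm Gal}(\Q/\mathbb{K})$, each of them simple. Writing $D(z)=(z-\tau(\alpha))\tilde D(z)$ with $\tilde D(\tau(\alpha))\neq 0$, the meromorphic function $g=f/D$ is holomorphic at a root $\tau(\alpha)$ lying in the open unit disc if and only if $f(\tau(\alpha))=0$. Now for $\tau$ fixing $\mathbb{K}$ one has $f^\tau=f$, so Proposition \ref{prop:conjugates} reads $f(\tau(\alpha))=\tau(f(\alpha))$, and the right-hand side vanishes if and only if $f(\alpha)=0$. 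Thus $g$ is well-defined at $\tau(\alpha)$ precisely when $f(\alpha)=0$: taking $\tau={\rm id}$ gives (iv)$\Rightarrow$(i), and the above equivalence applied to every admissible $\tau$ gives (i)$\Rightarrow$(iv).

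The one genuinely delicate point is the step (ii)$\Rightarrow$(i): one must resist applying the proposition directly to $f$ (unjustified, as a priori we only control a value of the conjugate $f^\tau$) and instead transport the information back to $\alpha$ through $f^\tau$ and the inverse automorphism, checking at each stage that the conjugate in play stays inside the open unit disc — this disc constraint is exactly why the hypotheses $|\tau(\alpha)|<1$ are imposed everywhere. Everything else is routine bookkeeping with the Galois action on coefficients and with the simple-root structure of $D$.
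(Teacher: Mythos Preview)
Your proof is correct and follows the same approach as the paper, relying on Proposition~\ref{prop:conjugates} for the cycle (i)$\Leftrightarrow$(ii)$\Leftrightarrow$(iii) and on the simple-root structure of $D$ for (i)$\Leftrightarrow$(iv). Your handling of (ii)$\Rightarrow$(i) --- applying the proposition to $f^\tau$ at $\tau(\alpha)$ with the automorphism $\tau^{-1}$ rather than directly to $f$ --- is in fact more careful than the paper's one-line argument, which invokes $f^\tau(\tau(\alpha))=\tau(f(\alpha))$ before $f(\alpha)\in\Q$ has been established.
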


\begin{proof}
	Les implications (iii) $\Rightarrow$ (i) $\Rightarrow$ (ii) sont immédiates.  D'après la proposition \ref{prop:conjugates}, (ii) implique que 
	$\tau(f(\alpha))=0$ et donc $f(\alpha)=0$. On a donc (ii) implique (i). Soit $\tau \in {\rm Gal}(\Q/\mathbb Q)$ tel que $\vert \tau(\alpha)\vert <1$. En combinant, la proposition \ref{prop:conjugates} et 
	(i) on obtient  que 
	$$
	f^\tau(\tau(\alpha)) = \tau(f(\alpha))=\tau(0)=0\,.
	$$
	Ainsi, (i) $\Rightarrow$ (iii).
	Comme (iv) implique trivialement (i),  il suffit par exemple de montrer que (iii) implique (iv). Or, d'après (iii), pour tout $\tau \in {\rm Gal}(\Q/\mathbb K)$, tel que $\vert \tau(\alpha)\vert <1$, on a $f^\tau(\tau(\alpha))=0$. Comme $f^\tau=f$, on en déduit que $f$ s'annule en $\tau(\alpha)$ et donc que la propriété (iv) est vérifiée.   	
\end{proof}

Similairement à \cite[Section 6]{FR23}, on définit  une action de ${\rm Gal}(\Q/\mathbb Q(\alpha))$  sur $ {\bf M}_{q,\alpha}$ de la façon suivante : étant donnés $\xi \in {\bf M}_{q,\alpha}$ et $\tau \in{\rm Gal}(\Q/\mathbb Q(\alpha))$ on pose $\tau(\xi) = f^\tau(\alpha)$ où $f$ désigne une $M_q$-fonction telle que $f(\alpha)=\xi$. Pour montrer que cette action est bien définie, il suffit de vérifier que 
$\tau(\xi)$ ne dépend pas du choix de $f$. Soit $g$ une $M_q$-fonction telle que $g(\alpha)=\xi$ et $h:=f-g$. On a $h(\alpha)=0$. Il découle de la proposition  \ref{prop:conjugates} 
que $h^\tau(\tau(\alpha))=h^\tau(\alpha)=0$. Comme $h^\tau=f^\tau -g^\tau$, on a $f^\tau(\alpha)=g^\tau(\alpha)$, comme souhaité. 

Le résultat suivant est l'analogue de \cite[Proposition 4]{FR23}. 

\begin{prop}
Soit $\mathbb K$ un corps de nombre, $\alpha \in \mathbb K$ et $\xi \in {\bf M}_{q,\alpha}$. Alors, $\xi \in {\bf M}_{q,\alpha,\mathbb K}$ si et seulement si $\tau(\xi)=\xi$ pour tout $\tau\in {\rm Gal}(\Q/\mathbb K)$.
\end{prop}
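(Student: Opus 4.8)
The plan is to prove the two implications separately, exploiting the Galois action $\tau(\xi)=f^{\tau}(\alpha)$ on ${\bf M}_{q,\alpha}$ that was just introduced, the only genuinely new input being a trace (Galois descent) argument for the nontrivial direction. Throughout I use that $\alpha\in\mathbb K$, so that ${\rm Gal}(\Q/\mathbb K)\subset{\rm Gal}(\Q/\mathbb Q(\alpha))$ and the action is indeed defined for the $\tau$ occurring in the statement.

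The implication $\xi\in{\bf M}_{q,\alpha,\mathbb K}\Rightarrow\tau(\xi)=\xi$ is the easy one. First I would write $\xi=f(\alpha)$ for an $M_q$-function $f$ with coefficients in $\mathbb K$ having no pole at $\alpha$. For any $\tau\in{\rm Gal}(\Q/\mathbb K)$ the coefficients of $f$ are fixed by $\tau$, hence $f^{\tau}=f$; since the action is well defined I may compute $\tau(\xi)$ with this representative, and $f$ being itself regular at $\alpha$ there is no subtlety: $\tau(\xi)=f^{\tau}(\alpha)=f(\alpha)=\xi$.

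For the converse, suppose $\tau(\xi)=\xi$ for every $\tau\in{\rm Gal}(\Q/\mathbb K)$. I would start from any representative $\xi=f(\alpha)$, with $f$ an $M_q$-function without pole at $\alpha$. Because $f$ satisfies a $\sigma_q$-equation with coefficients in $\Q[z]$ and is determined by finitely many Taylor coefficients, these coefficients generate a number field; enlarging it, I may assume they lie in a finite Galois extension $L$ of $\mathbb K$, with $H:={\rm Gal}(L/\mathbb K)$ and $d:=|H|$. Each conjugate $f^{\sigma}$ ($\sigma\in H$) is again an $M_q$-function, so the normalized trace $g:=\tfrac1d\sum_{\sigma\in H}f^{\sigma}$ is an $M_q$-function whose Taylor coefficients are the images of those of $f$ under $\tfrac1d\,{\rm Tr}_{L/\mathbb K}$, hence lie in $\mathbb K$. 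By the definition of the Galois action one has $f^{\sigma}(\alpha)=\sigma(\xi)=\xi$ for each $\sigma\in H$, so summing yields $g(\alpha)=\tfrac1d\sum_{\sigma\in H}\xi=\xi$, whence $\xi=g(\alpha)\in{\bf M}_{q,\alpha,\mathbb K}$.

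The delicate point, and the step I expect to be the main obstacle, is precisely the regularity of each conjugate $f^{\sigma}$ at $\alpha$: one must know that $f^{\sigma}(\alpha)$ is a bona fide evaluation and not a renormalized value, for this is what legitimizes $g(\alpha)=\tfrac1d\sum_{\sigma}f^{\sigma}(\alpha)$. In the present framework this is furnished by the well-definedness of the action $\tau(\xi)=f^{\tau}(\alpha)$ proved above, itself resting on Proposition \ref{prop:conjugates}. Should one wish to bypass this point altogether, the converse follows directly from Corollaire \ref{coro: descente}: the decomposition ${\bf M}_{q,\alpha}={\bf M}_{q,\alpha,\mathbb K}\otimes_{\mathbb K}\Q$ lets me write $\xi=\sum_i c_i\eta_i$ with $\eta_i\in{\bf M}_{q,\alpha,\mathbb K}$ linearly independent over $\mathbb K$, hence (by that same corollary) over $\Q$, and $c_i\in\Q$. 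Since the action is $\Q$-semilinear, meaning $\tau(c\eta)=\tau(c)\eta$ for $c\in\Q$ and $\eta\in{\bf M}_{q,\alpha,\mathbb K}$, invariance of $\xi$ under all $\tau\in{\rm Gal}(\Q/\mathbb K)$ forces $\tau(c_i)=c_i$ by $\Q$-independence of the $\eta_i$, so $c_i\in\Q^{{\rm Gal}(\Q/\mathbb K)}=\mathbb K$ and $\xi\in{\bf M}_{q,\alpha,\mathbb K}$.
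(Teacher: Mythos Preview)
Your proof is correct and follows essentially the same approach as the paper: the easy direction is identical, and for the converse the paper likewise passes to a finite Galois extension $\mathbb L/\mathbb K$ containing the coefficients of a representative $f$ and takes the average $g$ of the conjugates $f^{\tau}$ over ${\rm Gal}(\mathbb L/\mathbb K)$ to obtain an $M_q$-function in $\mathbb K[[z]]$ with $g(\alpha)=\xi$. Your explicit discussion of the regularity of $f^{\sigma}$ at $\alpha$ and the alternative route via Corollaire~\ref{coro: descente} are additions not present in the paper's proof, but the core argument is the same.
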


\begin{proof}
	Soit $\xi \in {\bf M}_{q,\alpha,\mathbb K}$ et $f\in \mathbb K[[z]]$ une $M_q$-fonction telle que $\xi=f(\alpha)$. Alors $\tau(\xi)=f^\tau(\alpha)=f(\alpha)=\xi$ pour tout $\tau \in {\rm Gal}(\Q/\mathbb K)$.
	
	Supposons à présent que $\xi$ est fixé par tous les éléments de ${\rm Gal}(\Q/\mathbb K)$. Soit $f$ une $M_q$-fonction telle que $f(\alpha)=\xi$. Soit $\mathbb L$ une extension galoisienne de $\mathbb K$ contenant tous les coefficients de $f$. En définissant $g$ comme la moyenne des $f^\tau(z)$ quand $\tau$ parcourt ${\rm Gal}(\mathbb L/\mathbb K)$, on obtient que 
	$g(z) \in \mathbb K[[z]]$ et  $g(\alpha)=\xi$, puisque  $\tau(\xi)=\xi$ pour tout $\tau \in{\rm Gal}(\mathbb L/\mathbb K)$.  Ainsi, $\xi \in {\rm M}_{q,\alpha,\mathbb K}$.
\end{proof}

\subsection{Décomposition des $M_q$-fonctions sur un corps de nombres}

Soit $R$, $0<R\leq 1$, un nombre réel. Une $M_q$-fonction $f$ est dite purement transcendante sur $D(0,R)$ si $f$ prend des valeurs transcendantes en tout point algébrique $\alpha$ du disque épointé $D^*(0,R)$ qui n'est pas un pôle de $f$. 

Le résultat suivant est l'analogue de \cite[Proposition 3]{FR23}. 

\begin{prop}\label{prop:decompo}
	Soient $\mathbb K$ un corps de nombre, $f(z) \in \mathbb K[[z]]$ une $M_q$-fonction et $R$, $0<R< 1$, un nombre réel. 
	Alors, il existe des fractions rationnelles $R_1(z),R_2(z) \in \mathbb K(z)$ et une $M_q$-fonction $g(z) \in \mathbb K[[z]]$, analytique et purement transcendante sur $D(0,R)$, tels que
	$$
	f(z) = R_1(z)+R_2(z)g(z)\,.
	$$
\end{prop}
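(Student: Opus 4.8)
The plan is to distinguish according to whether $f$ is rational. Recall that an $M_q$-function is either a rational function or transcendental over $\mathbb C(z)$ (a non-rational $M_q$-function is even differentially transcendante, cf. \cite{ADH,BCR}), and in particular transcendental over $\Q(z)$. If $f\in\mathbb K[[z]]$ is rational, then $f\in\mathbb K(z)$, and one takes $R_1:=f$, $R_2:=0$, and for $g$ any fixed $M_q$-function in $\mathbb Q[[z]]\subset\mathbb K[[z]]$ that is purely transcendante on $D(0,R)$; the series $\sum_{n\geq 0}z^{q^n}$ does the job, being analytic on the open unit disk and taking, by Mahler's theorem, transcendental values at every algebraic point of $D^*(0,R)$. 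So the whole content lies in the transcendental case, which I treat next.

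Assume from now on that $f$ is transcendental over $\Q(z)$. First I would pin down the exceptional set $Z:=\{\alpha\in\Q\cap D^*(0,R): \alpha \text{ n'est pas un p\^ole de }f,\ f(\alpha)\in\Q\}$. Since $1$ and $f$ are algebraically independent over $\Q(z)$, one has $\mathfrak{I}(1,f)=(0)$, so every relation expressing that $f(\alpha)$ is algebraic is a non-banale homogeneous relation between $1$ and $f(\alpha)$; by the $M_q$-case of Proposition~\ref{prop: sporadique} the set $Z$ is therefore finite. Moreover $Z$ is stable under the relevant Galois action: if $\alpha\in Z$ and $\tau\in\mathrm{Gal}(\Q/\mathbb K)$ satisfies $|\tau(\alpha)|<R$, then Proposition~\ref{prop:conjugates} (applied with $f^\tau=f$, since $f\in\mathbb K[[z]]$) gives $f(\tau(\alpha))=\tau(f(\alpha))\in\Q$, so $\tau(\alpha)\in Z$. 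Finally $f$ has only finitely many poles in $D(0,R)$.

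The construction of the decomposition then runs as follows. For each $\alpha\in Z$ I would subtract from $f$ the initial segment of its Taylor expansion at $\alpha$ up to the first order $d_\alpha$ at which a transcendental Taylor coefficient occurs, and simultaneously clear the poles of $f$; the Galois-stability of $Z$ together with Proposition~\ref{prop:conjugates} guarantees that these local data are Galois-compatible (so the orders $d_\alpha$ are constant on orbits), and a Hermite interpolation produces $R_1\in\mathbb K(z)$ with $f-R_1$ vanishing at each $\alpha\in Z$ to order exactly $d_\alpha$, and at every root of the denominator chosen below that lies in $D(0,R)$. Choosing $R_2\in\mathbb K[z]$ vanishing to order exactly $d_\alpha$ at each $\alpha\in Z$ and setting $g:=(f-R_1)/R_2$, one checks that $g$ is again an $M_q$-function (the $M_q$-functions form a ring stable under division by a nonzero element of $\Q(z)$), lies in $\mathbb K[[z]]$, and is analytic on $D(0,R)$: every root of $R_2$ in that disk lies in $Z$ and is cancelled by a zero of $f-R_1$ of at least equal order. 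It remains to see that $g$ is purely transcendante on $D(0,R)$: for an algebraic $\beta\in D^*(0,R)$ with $\beta\notin Z$ one has $f(\beta)\notin\Q$ while $R_1(\beta),R_2(\beta)\in\Q$ and $R_2(\beta)\neq 0$, so $g(\beta)=(f(\beta)-R_1(\beta))/R_2(\beta)$ is transcendental; for $\beta\in Z$ the value $g(\beta)$ is, by l'H\^opital, the ratio of the first transcendental Taylor coefficient of $f$ at $\beta$ to the leading coefficient of $R_2$, hence again transcendental.

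The main obstacle is the input used in the last step: the claim that at each exceptional point $\alpha\in Z$ at least one Taylor coefficient of $f$ is transcendental (equivalently, that $f$ does not coincide to infinite order at $\alpha$ with a power series over $\Q$). Without it the exceptional points cannot be removed, since for any $g$ of the form $uf+v$ with $u,v\in\Q(z)$ the value $g(\alpha)$ depends only on finitely many Taylor coefficients of $f$ at $\alpha$ and would be algebraic if all of them were. I expect to establish this by differentiating a degenerate $\sigma_q$-relation witnessing $f(\alpha)\in\Q$ --- of the shape produced in the proof of Proposition~\ref{prop:conjugates} --- which expresses $f'(\alpha)$, and successively the higher $f^{(m)}(\alpha)$, in terms of the values $f(\alpha^{q^j})$; one then uses the finiteness of $Z$ to ensure that some $\alpha^{q^j}$ occurring in these expressions lies outside $Z$, forcing the corresponding coefficient to be transcendental. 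The dichotomy of Corollaire~\ref{coro: dichotomie} can be invoked to keep the bookkeeping over $\mathbb K$ clean throughout.
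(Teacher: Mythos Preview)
Your route differs from the paper's: rather than induct on a sum of multiplicities, you aim for a one-shot Hermite interpolation over the finite exceptional set $Z$, invoking Proposition~\ref{prop: sporadique} for finiteness and Proposition~\ref{prop:conjugates} for Galois-compatibility. That outline is sound, and with some extra care (including all $\mathrm{Gal}(\Q/\mathbb K)$-conjugates of points of $Z$ among the interpolation nodes so that $R_1\in\mathbb K(z)$, iterating Proposition~\ref{prop:conjugates} to propagate the compatibility $\tau(\eta_j(\alpha))=\eta_j(\tau(\alpha))$ to $j\ge 1$, and checking that the principal parts of $f$ at its poles are likewise Galois-compatible) the construction can be made rigorous --- \emph{provided} each $d_\alpha$ is finite.

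That is precisely where there is a genuine gap, and your proposed fix does not close it. Differentiating the degenerate relation $p_{-1}(z)+p_0(z)f(z)+\sum_{j\ge1}p_j(z)f(z^{q^j})=0$ at $\alpha$ does \emph{not} express $f^{(k)}(\alpha)$ in terms of the values $f(\alpha^{q^j})$ alone: already the second derivative brings in $f'(\alpha^{q^j})$, and in general the $k$-th derivative involves all $f^{(l)}(\alpha^{q^j})$ with $l<k$. Nothing prevents these transcendental contributions from cancelling, and the coefficients attached to the terms with $\alpha^{q^j}\notin Z$ can perfectly well vanish (the $p_j$ have bounded degree, so $p_j^{(k)}\equiv 0$ once $k$ exceeds that degree); the finiteness of $Z$ gives no leverage here. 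The paper settles this point by a separate, self-contained lemma proved just before the proposition: embedding $1,f$ in a $q$-Mahler system with matrix $A(z)$ regular at $\alpha^q$, the gauge change $f\mapsto (f-f(\alpha))/(z-\alpha)$ lowers both the multiplicity of $f$ at $\alpha$ and the pole order of $\det A$ at $\alpha$ by one, so an induction on the latter bounds the former. With that bound in hand your Hermite construction goes through; the paper instead runs the induction directly on the total multiplicity, peeling off one Galois-symmetrised term $\sum_{\tau}\bigl(f(z)-\tau(f(\alpha))\bigr)/(z-\tau(\alpha))$ at each step, and handles the poles by a second short induction.
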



Soient $f(z)$ une $M_q$-fonction et $\alpha\in\Q$, $0<\vert \alpha\vert<1$, qui n'est pas un pôle de $f$. La \textit{multiplicité} de $f$ en $\alpha$ 
est  définie comme le supremum des entiers $m$ tel qu'il existe  $P(z) \in \Q[z]$ de degré au plus $m-1$ pour lequel le quotient
$$
\frac{f(z) - P(z)}{(z-\alpha)^m}
$$ 
est bien défini au point $\alpha$. Si 
$
f(z)=\sum_{n} \eta_n (z-\alpha)^n$ 
est le développement de Taylor de $f(z)$ en $\alpha$, 
alors $m$ est le plus petit entier pour lequel $\eta_m \notin \Q$. Remarquons que la multiplicité est nulle si et seulement si $f(\alpha) \notin \Q$. 
D'autre part, le lemme suivant assure que la multiplicité de $f$ en $\alpha$ est toujours finie.

\begin{lem}\label{lem:multiplicité}
Soient $1,f(z),f_2(z),\ldots,f_n(z)$ des $M_q$-fonctions linéairement indépendantes  sur $\Q(z)$ et formant un vecteur solution d'un système $q$-mahlé\-rien linéaire associé à une matrice  $A(z)$. 
Supposons que ces fonctions sont définies en $\alpha$ et que 
$\alpha^q$ est un point régulier pour ce système. Alors,  la multiplicité de $f$ en $\alpha$  est majorée par l'ordre de $\alpha$ comme pôle de $\det A(z)$. 
\end{lem}

\begin{proof}
On procède par récurrence sur l'ordre $\nu$ de $\alpha$ en tant que pôle de $\det A(z)$.

Supposons que $\nu=0$, c'est-à-dire que $A(\alpha)$ est bien définie. Comme le point $\alpha^{q}$ est régulier pour ce système et que les fonctions sont toutes définies en $\alpha$, 
la matrice $A^{-1}(\alpha)$ est également définie. Le fait que les matrices $A(z)$ et $A^{-1}(z)$ soient bien définies en $\alpha$ et que $\alpha^q$ soit un point régulier implique que le point $\alpha$ est lui-même régulier.  Puisque les fonctions $1,f(z),f_2(z),\ldots,f_n(z)$ sont  linéairement indépendantes, il résulte du théorème M2 que $f(\alpha) \notin \Q$. Ainsi, la multiplicité de $f$ en 
$\alpha$ est  nulle et donc bien inférieure ou égale à $\nu$. 

Soit $\nu \geq 1$. Supposons le résultat montré pour l'ordre $\nu-1$. Supposons maintenant que l'ordre de $\alpha$ comme pôle de $\det A(z)$ est égal à $\nu$. 
Si $f(\alpha) \notin \Q$, sa multiplicité est nulle, donc inférieure à $\nu$. On peut donc supposer que $f(\alpha)  \in \Q$. 
Posons $g(z) := (f(z)-f(\alpha))/(z-\alpha)$ et notons que $g$ est une $M_q$-fonction qui est bien définie en $\alpha$. 
On obtient que 
$$
\begin{pmatrix}
1 \\ g(z^q) \\ f_2(z^q) \\ \vdots \\ f_n(z^q) 
\end{pmatrix}
=
P(z^q)A(z)P(z)^{-1}
\begin{pmatrix}
1 \\ g(z) \\ f_2(z) \\ \vdots \\ f_n(z) 
\end{pmatrix}
$$
où 
$$
P(z):= 
\begin{pmatrix}
1 & 0 & 0 & \cdots & 0 
\\
\frac{-f(\alpha)}{z-\alpha} & \frac{1}{z-\alpha}& 0& \cdots & 0 
\\ & & 1 & &
\\ &&& \ddots 
\\ &&&&1
\end{pmatrix}\,.
$$
Posons $B(z):=P(z^q)A(z)P(z)^{-1}$. Comme 
$$
\det B(z)=\frac{\det A(z)(z - \alpha)}{z^q - \alpha} \,,
$$
l'ordre de $\alpha$ comme pôle de $\det B(z)$ est égal à $\nu - 1$. Par hypothèse de récurrence, on obtient que la multiplicité de $g$ en $\alpha$  est au plus $\nu - 1$ et 
donc que la multiplicité de $f$ en $\alpha$  est au plus $\nu$. 
\end{proof}

Démontrons d'abord la Proposition \ref{prop:decompo} dans le cas où $f$ est analytique sur le disque $D(0,R)$. 

\begin{prop}\label{prop: decompo2}
	La proposition \ref{prop:decompo} est vraie si $f$ est analytique sur $D(0,R)$. 
	\end{prop}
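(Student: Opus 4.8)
The plan is to prove the stronger dichotomy that an $M_q$-function $f\in\mathbb K[[z]]$ which is analytic on $D(0,R)$ is either a rational function or is already purely transcendental after subtracting a rational function and dividing by one. First I would dispose of the case $f\in\mathbb K(z)$: here one cannot take $R_2\neq0$, since a nonzero rational multiple of a rational function remains rational and takes algebraic values, so I set $R_1:=f$, $R_2:=0$, and exhibit a single purely transcendental $g$. The natural candidate is the canonical Mahler function $\ell(z):=\sum_{n\ge0}z^{q^n}\in\mathbb K[[z]]$, which satisfies $\ell(z)=z+\sigma_q(\ell)$; the pair $(1,\ell)$ is a solution vector of a companion-type system whose matrix has determinant $1$ and is therefore regular at every point of $D^*(0,R)$, so the Th\'eor\`eme~M2 applies at every algebraic $\alpha\in D^*(0,R)$. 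A short inspection of the (very rigid) $\sigma_q$-linear relations between $1$ and $\ell$, which all descend from $\ell-\sigma_q(\ell)=z$, shows that no banal relation can degenerate into $\ell(\alpha)\in\Q$; hence $\ell$ is purely transcendental on $D(0,R)$.

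Now suppose $f\notin\mathbb K(z)$, so that $1$ and $f$ are linearly independent over $\Q(z)$, and set $\mathcal S:=\{\alpha\in\Q:0<|\alpha|<R,\ f(\alpha)\in\Q\}$. Applying Proposition~\ref{prop: sporadique}, cas (${\rm M}_q$), to the pair $(1,f)$, every $\alpha\in\mathcal S$ produces the relation $f(\alpha)X_1-X_2$, which is non banal because $\mathfrak I(1,f)$ contains no nonzero linear form; hence $\mathcal S$ is finite. For $\alpha\in\mathcal S$ the multiplicity $m_\alpha$ of $f$ at $\alpha$ is finite by the multiplicity lemma (applied to a linearly independent regular system containing $1$ and $f$, furnished by Proposition~\ref{prop: mreg}), so that $f(z)=P_\alpha(z)+(z-\alpha)^{m_\alpha}h_\alpha(z)$ with $P_\alpha\in\Q[z]$ of degree $<m_\alpha$ and $h_\alpha$ an $M_q$-function satisfying $h_\alpha(\alpha)=\eta_{m_\alpha}\notin\Q$. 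I would then choose, by Hermite interpolation, a polynomial $R_1$ matching the algebraic jet $P_\alpha$ to order $m_\alpha$ at each $\alpha\in\mathcal S$, set $R_2(z):=\prod_{\alpha\in\mathcal S}(z-\alpha)^{m_\alpha}$, and put $g:=(f-R_1)/R_2$. Then $g$ is an $M_q$-function, analytic on $D(0,R)$ because the numerator vanishes to order $\ge m_\alpha$ at each zero $\alpha$ of $R_2$ lying in the disc, and it is purely transcendental: at an algebraic $\beta\in D^*(0,R)$ with $R_2(\beta)\neq0$ one has $g(\beta)\in\Q\iff f(\beta)\in\Q\iff\beta\in\mathcal S$, which is excluded, while at $\beta=\alpha\in\mathcal S$ the value $g(\alpha)=\eta_{m_\alpha}/\prod_{\alpha'\neq\alpha}(\alpha-\alpha')^{m_{\alpha'}}$ is transcendental since $\eta_{m_\alpha}$ is.

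The delicate point, and the main obstacle, is to arrange $R_1,R_2\in\mathbb K(z)$ and $g\in\mathbb K[[z]]$ rather than merely coefficients in $\Q$. For this I would establish that the exceptional data is Galois-equivariant: if $\alpha\in\mathcal S$ and $\tau\in{\rm Gal}(\Q/\mathbb K)$ satisfies $|\tau(\alpha)|<R$, then $\tau(\alpha)\in\mathcal S$ with the same multiplicity, and the algebraic jet of $f$ at $\tau(\alpha)$ is the $\tau$-image of that at $\alpha$. The key observation is that the auxiliary functions $g_k:=(g_{k-1}-g_{k-1}(\alpha))/(z-\alpha)$, with $g_0:=f$, have algebraic coefficients for $k\le m_\alpha$, so that $\tau$ acts on them; Proposition~\ref{prop:conjugates} then yields $g_k^{\tau}(\tau(\alpha))=\tau(g_k(\alpha))$ for $k<m_\alpha$ and, applied to $g_{m_\alpha}^{\tau}$ together with $\tau^{-1}$, forces the leading coefficient at $\tau(\alpha)$ to be transcendental exactly when it is at $\alpha$. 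Consequently the prescription ``$\tau$-image of the jet $P_\alpha$ at $\tau(\alpha)$'' is consistent across each full ${\rm Gal}(\Q/\mathbb K)$-orbit, so Hermite interpolation produces $R_1\in\mathbb K[z]$, while $R_2(z):=\prod_{\alpha}N_\alpha(z)^{m_\alpha}$, with $N_\alpha$ the minimal polynomial of $\alpha$ over $\mathbb K$, lies in $\mathbb K[z]$ and satisfies $R_2(0)\neq0$. Since $f\in\mathbb K[[z]]$, it follows that $g=(f-R_1)/R_2\in\mathbb K[[z]]$, completing the decomposition; the verifications that $g$ is analytic and purely transcendental on $D(0,R)$ are exactly as above, once one notes that the zeros of $R_2$ inside the disc are precisely the points of $\mathcal S$.
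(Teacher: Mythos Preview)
Your approach is correct but genuinely different from the paper's. The paper proceeds by induction on the total multiplicity $S=\sum_{\alpha}m_\alpha$: at each step it picks one exceptional point $\alpha$, forms the Galois-averaged function
\[
h(z)=\sum_{\tau\in{\rm Gal}(\mathbb L/\mathbb K)}\frac{f(z)-\tau(f(\alpha))}{z-\tau(\alpha)},
\]
observes that $h\in\mathbb K[[z]]$ is analytic on $D(0,R)$ with total multiplicity $\le S-1$, and then inverts the passage $f\mapsto h$ rationally over $\mathbb K$. This averaging trick keeps everything in $\mathbb K$ from the outset and avoids any separate descent argument. Your route instead builds $R_1,R_2$ in one shot by Hermite interpolation over $\Q$, and then argues \emph{a posteriori} that the data is ${\rm Gal}(\Q/\mathbb K)$-equivariant, so that $R_1,R_2$ can be chosen in $\mathbb K(z)$. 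This is more direct and gives $R_1,R_2$ as polynomials rather than merely rational functions, but the equivariance verification (constancy of $m_\alpha$ on orbits, compatibility of jets via iterated applications of Proposition~\ref{prop:conjugates}) is more laborious than the paper's single averaging step. One small bonus of your version is that you treat the rational case $f\in\mathbb K(z)$ explicitly, which the paper's induction tacitly excludes since Lemma~\ref{lem:multiplicit�} requires $1$ and $f$ to be linearly independent over $\Q(z)$; your device of exhibiting $\ell(z)=\sum_{n\ge0}z^{q^n}$ as a universal purely transcendental witness (regular system, determinant $1$) is a clean fix. Two minor points to tighten: your formula $g(\alpha)=\eta_{m_\alpha}/\prod_{\alpha'\neq\alpha}(\alpha-\alpha')^{m_{\alpha'}}$ omits the algebraic contribution of $R_1$ at order $m_\alpha$, though this does not affect transcendence; and in the $\mathbb K$-version $R_2=\prod N_\alpha^{m_\alpha}$ you should state that the product runs over ${\rm Gal}(\Q/\mathbb K)$-orbit representatives in $\mathcal S$, and note that conjugates lying outside $D(0,R)$ are harmless for analyticity on that disc.
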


\begin{proof}
Nous procédons par récurrence sur la somme $S$ des multiplicités de $f$ aux points de $\Q\cap D(0,R)$. Notons que $S$ est  finie d'après le lemme \ref{lem:multiplicité} 
et le fait que la multiplicité de $f$ est non nulle seulement en un nombre fini de point de $\Q\cap D(0,R)$.  

Supposons que $S=0$. Alors, $f(z)$ prend des valeurs transcendantes en tout point de $D^*(0,R)$ et il suffit de choisir $P:=0$, $Q:=1$ et $g:=f$. 

Supposons à présent que $S \geq 1$ et que le résultat est démontré pour toute fonction analytique sur le disque $D(0,R)$ dont la somme des multiplicités est au plus $S-1$. 
Soit $\alpha$ un point de multiplicité $m\geq 1$ pour $f$. Soit $\mathbb L$ une extension galoisienne de $\mathbb K$ contenant $\alpha$ et posons 
$G:={\rm Gal}(\mathbb L/\mathbb K)$. 
Notons que d'après le corollaire \ref{coro: dichotomie}, on a $f(\alpha) \in \mathbb K$. Posons
$$
h(z) :=  \sum_{\tau \in G} \frac{f(z) - \tau(f(\alpha))}{z-\tau(\alpha)}\,\cdot
$$
C'est une $M_q$-fonction. De plus, comme $h(z)$ est fixée par $G$, on a $h(z)\in\mathbb K[[z]]$. 
Montrons que $h$ est analytique sur $D(0,R)$. Il est immédiat que $h$ est analytique en tout point  de $D(0,R)$ qui n'est pas de la forme $\tau(\alpha)$ pour un $\tau \in G$. 
D'autre part, si $\tau(\alpha)\in D(0,R)$, $\tau \in G$,  la proposition \ref{prop:conjugates} implique que $\tau(f(\alpha))=f(\tau(\alpha))$ et donc que $h$ 
est analytique au point $\tau(\alpha)$. Ainsi,  $h$ est analytique sur $D(0,R)$. 
La multiplicité en $\alpha$ de la fonction
$$
\frac{f(z) - \tau(f(\alpha))}{z-\tau(\alpha)}
$$
vaut $m$ si $\tau(\alpha)\neq \alpha$ et $m-1$ sinon. On en déduit que la multiplicité de $h$ en $\alpha$  vaut $m-1$. En tout autre point de $\Q\cap D(0,R)$, 
la multiplicité de $h$ est inférieur ou égale à celle de $f$. La somme des multiplicités de $h$ est donc au plus égale à $S-1$. 
Par hypothèse de récurrence, il existe  $A_1(z),A_2(z) \in \mathbb K(z)$ et  $g(z)\in \mathbb K[[z]]$ analytique et purement transcendante sur $D(0,R)$, tels que
$$
h(z)=A_1(z)+A_2(z)g(z)\,.
$$
On définit alors
$$
\delta(z) := \left( \sum_{\tau \in G} \frac{1}{z- \tau(\alpha)}\right)^{-1} \text{ et } \gamma(z) := \delta(z) \left(\sum_{\tau\in G} \frac{\tau(f(\alpha))}{z-\tau(\alpha)}\right)
$$
Par construction, $\delta(z)$ et $\gamma(z)$ appartiennent à $\mathbb K(z)$. Par ailleurs, on a
$$
f(z)=\delta(z)h(z)-\gamma(z)\,.
$$
En posant 
$R_1(z):=A_1(z)\delta(z)-\gamma(z)$ et $R_2(z):=A_2(z)\delta(z)$, 
on obtient  
$$
f(z)=R_1(z) + R_2(z)g(z)
$$
comme souhaité.
\end{proof}

\begin{proof}[Démonstration de la Proposition \ref{prop:decompo}]
Nous procédons par récurrence sur la somme $S$ des pôles de $f$ dans $D(0,R)$, comptés avec multiplicité. 

Si $S=0$, $f$ est analytique sur $D(0,R)$ est  le résultat correspond à la proposition \ref{prop: decompo2}. 

Supposons à présent que $S \geq 1$ et que le résultat est démontré quand la somme des pôles est au plus $S-1$. 
Soit $\alpha$ un pôle de $f(z)$ et $D(z)$ le polynôme minimal de $\alpha$ sur $\mathbb K$. Alors la somme des pôles de $D(z)f(z)\in\mathbb K[[z]]$ est strictement inférieure à $S$. 
Par hypothèse de récurrence, il existe $A_1(z),A_2(z) \in \mathbb K(z)$ et  $g(z)\in\mathbb K[[z]]$, analytique et purement transcendante sur $D(0,R)$, 
tels que
$$
D(z)f(z)=A_1(z)+A_2(z)g(z) \,.
$$ 
En posant $R_1(z):=A_1(z)/D(z)$ et $R_2(z):=A_2(z)/D(z)$, on obtient le résultat souhaité. 
\end{proof}
\section{Exemples de relations non banales}\label{sec: ex}

Nous illustrons le théorème \ref{thm: orbital} à travers quelques exemples. 

\begin{ex}
D'après la proposition \ref{prop: sporadique}, le lieu des relations non banales entre des $E$-fonctions fixées est un ensemble fini. Réciproquement, tout ensemble fini $\mathcal S\subset \Q^*$ est le lieu des relations banales entre certaines $E$-fonctions. En effet, considérons $f(z):=P(z)e^z$, où $P(z)\in\Q[z]$ est à racines simples avec $\mathcal S$ comme ensemble de racines. On obtient que $f(\alpha)=0$ pour tout $\alpha\in\mathcal S$ et $f(\alpha)\not\in\Q$ pour tout $\alpha\in\Q^*\setminus\mathcal S$ . Ainsi, l'ensemble $\mathcal S$ correspond au lieu des relation non banales pour $\{f\}$.  Notons que pour tout $\alpha\in\mathcal S$, la relation $f(\alpha)=0$ s'obtient comme dégénérescence de la relation différentielle 
$$
(P'+P)f -Pf'=0\,.
$$
\end{ex}

\begin{ex}
	Considérons la fonction de Bessel 
	$$
	J_0(z)=\sum_{n=0}^\infty \frac{(-1)^n}{(n!)^2}\left(\frac{x}{2}\right)^{2n}
	$$
	et 
	$$
	f(z):=\sum_{n=0}^\infty\left(\frac{(-1)^n(2n)!}{(n!)^42^{2n}}-\frac{(-1)^n}{n!(n-1)!2^{2n-1}}\right)x^{2n}-\sum_{n=0}^\infty \frac{(-1)^{n}}{(n+1)!n!}\frac{x^{2n+1}}{2^{2n+1}} \,\cdot
	$$
	En $z=1$, on a la relation algébrique suivante : 
	\begin{equation}\label{eq:  relbessel}
	f(1)-J_0(1)^2=0\,.
	\end{equation}
	On peut vérifier que cette relation s'obtient par dégénérescence de la relation algébro-différentielle suivante entre $f$ et $J_0$ :
	\begin{equation}\label{eq: jf}
	f(z)-J_0(z)^2+(z-1)J_0'(z)=0\,. 
	\end{equation}
	Comme $J_0(z)$ et $J_0'(z)$ sont algébriquement indépendantes sur $\Q(z)$, il en va de même, d'après \eqref{eq: jf}, de $f(z)$ et $J_0(z)$. En particulier, la  relation \eqref{eq: relbessel} est non banale  relativement 
	à $\{f(z), J_0(z)\}$.
\end{ex}

\begin{ex}\label{ex: tm3}
Considérons $f(z):=\sum_{n=0}^\infty a_nz^n$, où $a_n\in\{0,1\}$ désigne le nombre d'occurences du chiffre $2$ dans l'écriture ternaire de $n$, compté modulo $2$.  
La suite $(a_n)_{n\geq 0}$ est une variante de la célèbre suite de Thue-Morse.  
Des relations $a_{3n}=a_{3n+1}=a_n$ et $a_{3n+2}=1-a_n$, on déduit que $f(z)$ est une $M_3$-fonction, solution de l'équation $3$-mahlérienne inhomogène 
\begin{equation}\label{eq: tm3}
\frac{z^2}{1-z^3} - f(z) + (1+z-z^2)f(z^3) =0 \,.
\end{equation}
Posons $\varphi := \frac{1-\sqrt{5}}{2}$. La relation $\sigma_3$-linéaire \eqref{eq: tm3} entre $1$ et $f$ dégénère au point $\varphi$ en 
$$
\frac{\varphi^2}{1-\varphi^3}-f(\varphi)=0\,.
$$
En particulier, $f(\varphi) \in \Q$. En raisonnant par récurrence à partir de \eqref{eq: tm3}, on obtient que pour tout $\ell\geq 0$, $f(\varphi^{1/3^\ell}) \in \Q$. 
Par ailleurs, on peut montrer que $f$ n'est pas rationnelle et l'on obtient donc qu'il existe une relation linéaire non banale relativement à $\{1,f\}$ 
en tout point de la forme $\varphi^{1/3^\ell}$, $\ell\geq 0$.  
Le lieu des relations non banales est donc infini, bien qu'il soit fini sur tout disque $D(0,R)$, $0<R<1$, comme le garantit la proposition \ref{prop: sporadique}.  

Notons qu'une relation linéaire entre $1$ et $f(\varphi^{1/3})$ s'obtient par dégénérescence de la relation $\sigma_3$-linéaire 
\begin{equation*}
\frac{z^2+z^5+z^6+z^7}{1-z^9} - f(z) + (1+z-z^2)(1+z^3-z^6)f(z^9) =0\,,
\end{equation*}
laquelle s'obtient à partir de \eqref{eq: tm3} et donne 
$$
 \frac{\varphi^{2/3}+\varphi^{5/3}+\varphi^{6/3}+\varphi^{7/3}}{1-\varphi^3}-f(\varphi^{1/3})=0 \,.
$$
De façon générale, on peut montrer que toute relation linéaire entre $1$ et $f(\varphi^{1/3^{\ell}})$ peut s'obtenir par dégénérescence d'une relation $\sigma_3$-linéaire 
de profondeur $\ell+1$, c'est-\`a-dire faisant intervenir la fonction $\sigma_3^{\ell+1}(f)$ et qu'on ne peut l'obtenir comme dégénérescence d'une relation de profondeur moindre. 

Cet exemple met en évidence une différence avec  le cas des $E$-fonctions, où l'on peut obtenir toutes les relations non banales entre  $f_1(\alpha),\ldots,f_r(\alpha)$ par dégénérescence de relations algébro-différentielles dont la profondeur est bornée indépendamment de $\alpha$. 
Pour obtenir un résultat analogue pour les $M_q$-fonctions,  il faut alors  restreindre  leur étude au disque $D(0,R)$ où $R<1$ est préalablement fixé. 
\end{ex}

\begin{ex}\label{ex: bs}
Les suites de Baum-Sweet et Rudin-Shapiro figurent parmi les exemples classiques de suites automatiques (cf.  \cite[Chapter 5]{AS03}). 
	Notons $f(z)$ la série génératrice associée à la suite de Baum-Sweet et $g(z)$ celle associée à la suite de Rudin-Shapiro. 
	Ce sont deux $M_2$-fonctions solutions des équations suivantes :
	$$
	f(z)-zf(z^2) - f(z^4)=0\; \;\;\text{ et }\;\;\; g(z) +(z-1)g(z^2)-2zg(z^4)=0\, .
	$$
	Considérons la $M_2$-fonction $h(z):=(1-3z)f(z^2)^3 + f(z)g(z)$. D'après \cite[Section 9.3]{Ro18}, les fonctions $f(z)$, $f(z^2)$, $g(z)$ et $g(z^2)$ sont algébriquement indépendantes sur 
	$\Q(z)$. On en déduit que  $h(z)$, $f(z)$ et $g(z)$ le sont également.  Par ailleurs, on a 
	\begin{equation}\label{eq:P2-ex_relation_alg_bs_rs}
	f\left(\frac{1}{3}\right)g\left(\frac{1}{3}\right) -h\left(\frac{1}{3}\right)= 0\, ,
	\end{equation}
	qui est donc nécessairement une relation non banale relativement à $\{f,g,h\}$. Elle s'obtient par dégénérescence en $z=1/3$ de la relation $\sigma_2$-algébrique  
	$$
	f(z)g(z) -h(z) +(1-3z)f(z^2)^3 =0\,.
	$$
\end{ex}

\begin{ex}
Il peut également exister des relations $\sigma_q$-algébriques non triviales, mais qui ne sont source d'aucune dégénérescence. Voici un exemple. 
	On note à nouveau $f(z)$ la série génératrice de la suite de Baum-Sweet (voir  exemple \ref{ex: bs}). Notons $(n)_2$ le développement binaire de l'entier $n$ et $S_2(n)$ 
	la somme des chiffres  binaires de $n$. Définissons alors la suite $(b_n)_{n \in \N}$ par :
	$$
\left\{	\begin{array}{rcll}
b_n &=& 0 & \text{si } (n)_2 \text{ a un bloc de $0$ consécutifs de longueur impair},
\\
b_n &=& 1 & \text{si $(n)_2$ ne contient pas de tel bloc et $S_2(n)$ est pair},
\\ b_n&=&-1 & \text{sinon}.\end{array}\right.
	$$
	On pose $g(z):=\sum_{n\in \N} b_nz^n$. On peut vérifier que 
	$$
	\begin{pmatrix}
	f(z) & g(z)\\
	f(z^2) & -g(z^2)
	\end{pmatrix}
	=\begin{pmatrix}
	z & 1 \\ 1 & 0
	\end{pmatrix}
		\begin{pmatrix}
	f(z^2) & -g(z^2)\\
	f(z^4) & g(z^4)
	\end{pmatrix}\,.
	$$
	D'après \cite{Ro18}, le groupe de Galois de ce système $2$-mahlérien est ${\rm SL}_2(\Q)$.
	On peut alors utiliser la proposition \ref{prop: sigmaqideal} pour montrer que les relations $\sigma_2$-algébriques entre $f$ et $g$ sont engendrées par la relation 
	\begin{equation}\label{eq:rel_BaumSweet}
	f(z)g(z^2)+f(z^2)g(z)=2\,
	\end{equation}
	et leurs équations minimales respectives : 
	$$
	f(z)-zf(z^2)-f(z^4) = 0\qquad \mbox{ et } \qquad g(z) +zg(z^2) - g(z^4)=0\,.
	$$ 
	 Cependant, ces relations ne sont source d'aucune dégénérescence :  $f(\alpha)$ et $g(\alpha)$ sont algébriquement indépendants sur $\Q$ pour tout $\alpha\in\Q$, $0<\vert\alpha\vert<1$.
\end{ex}


\end{document}